\def\ol{\overline}
\def\e{\epsilon}
\def\a{{\alpha}}
\def\ba{{\bar\a}}
\def\bk{{\bar k}}
\def\bl{{\bar l}}
\def\p{\partial}
\def\cut{{\rm cut}}
\newcommand\R{{\mathbb R}}
\newcommand\C{{\mathbb C}}
\def\ol{\overline}
\def\a{{\alpha}}
\def\ii{{\sqrt{-1}}}
\def\dbar{\overline{\partial }}
\def\ddbar{\partial\overline\partial}
\def\vv<#1,#2>{\left\langle#1,#2\right\rangle}
\def\iddbar{{\sqrt{-1}\partial \overline\partial}}
\newtheorem{thm}{Theorem}[section]
\newtheorem{lem}{Lemma}[section]
\newtheorem{cor}{Corollary}[section]
\theoremstyle{definition}
\newtheorem{defn}{Definition}[section]
\theoremstyle{remark}
\newtheorem{rem}{Remark}
\numberwithin{equation}{section}
\def\bi{{\bar i}}
\def\bj{{\bar j}}
\def\ord{{\rm ord}}
\def\Hess{{\rm Hess}}
\def\tn{{\mathbf{tn}}}
\def\tr{{\rm tr}}
\def\D{\displaystyle}
\begin{document}
\title{Three circle theorem on almost Hermitian manifolds and applications}

\author{Chengjie Yu$^1$}
\address{Department of Mathematics, Shantou University, Shantou, Guangdong, 515063, China}
\email{cjyu@stu.edu.cn}
\author{Chuangyuan Zhang}
\address{Department of Mathematics, Shantou University, Shantou, Guangdong, 515063, China}
\email{12cyzhang@stu.edu.cn}
\thanks{$^1$Research partially supported by GDNSF with contract no. 2021A1515010264 and NNSF of China with contract no. 11571215.}
\renewcommand{\subjclassname}{%
  \textup{2010} Mathematics Subject Classification}
\subjclass[2010]{Primary 53C55; Secondary 32A10}
\date{}
\keywords{almost Hermitian manifold, holomorphic function, three circle theorem}
\begin{abstract}
In this paper, we extend Gang Liu's three circle theorem for K\"ahler manifolds to almost Hermitian manifolds. As applications of the three circle theorem, we obtain sharp dimension estimates for spaces of holomorphic functions of polynomial growth and rigidity for the estimates, Liouville theorems for pluri-subharmonic functions of sub-logarithmic growth and Liouville theorems for holomorphic functions of Cheng-type, and fundamental theorem of algebra on almost Hermitian manifolds with slightly negative holomorphic sectional curvature. The results are generalization of Liu's results and some of them are new even for the K\"ahler case. We also discuss the converse of the three circle theorem on Hermitian manifolds which turns out to be rather different with the K\"ahler case. In order to obtain the three circle theorem on almost Hermitian manifolds, we also establish a general maximum principle in the spirit of Calabi's trick so that a general three circle theorem is a straight forward consequence of the general maximum principle. The general maximum principle and three circle theorem established may be of independent interests. 
\end{abstract}
\maketitle\markboth{Yu \& Zhang}{Three circle theorem on almost Hermitian manifolds}
\maketitle
\section{Introduction}
One of the recent breakthroughs in the study of the structures of nonnegatively curved complete noncompact K\"ahler manifolds is a series of works \cite{Liu,Liu-JDG,Liu-Ann,Liu-Uni} by  Liu which finally sovled Yau's finite generation conjecture and Yau's uniformization conjecture under the assumption of nonnegative holomorphic bisectional curvature and maximal volume growth. The later conjecture was also solved by Tam and his collaborators in a series of works \cite{CT,HT,LT} using K\"ahler-Ricci flow following the pioneer work of Shi \cite{Shi}. Mok \cite{Mok} and Chen-Tang-Zhu \cite{CTZ} also solved the later conjecture on complete noncompact K\"ahler surfaces under certain geometric asumptions. It is an interesting problem to extend the aforementioned results to complete noncompact Hermitian manifolds. There is a similar problem in the compact case. It was shown by Siu-Yau \cite{SY} and Mori \cite{Mori} independently that compact K\"ahler manifolds with positive holomorphic bisectional curvature must be biholomorphic to the complex projective space. In fact, Mori's result is more general. He solved the Hartshorne conjecture in algebraic geometry which at least implies that a compact Hermitian manifold with positive curvature in the sense of Griffiths must be also biholomorphic to the complex projective space. It is a folklore problem in geometric analysis to give an analytic proof of the aforementioned fact.

For analytic methods dealing with the Hermitian case, similar to K\"ahler-Ricci flow, there are Hermitian curvature flow introduced by Streets-Tian \cite{ST}, and Chern-Ricci flow introduced by Gill \cite{Gill} and Tosatti-Weinkove \cite{TW}. It seems that the Hermitian curvature flow is more like the K\"ahler-Ricci flow because it preserves the signs of certain interested curvatures (See \cite{Us1,Us2}) while the Chern-Ricci flow does not (See \cite{Yang}).  Analogous to the K\"ahler case, there must be a similar approach to Liu's for the Hermitian case. The starting point of Liu's approach is an elegant and clever generalization of Hadamard's three circle theorem to complete noncompact K\"ahler manifolds with nonnegative holomorphic sectional curvature in \cite{Liu}. So, the first step to extend Liu's approach in the Hermitian case is to extend the three circle theorem to Hermitian manifolds. This is the motivation of our work. In fact, we are able to extend the three circle theorem to the more general almost Hermitian case.

Let's recall Liu's (\cite{Liu}) three circle theorem first.
\begin{thm}[Liu \cite{Liu}]\label{thm-Liu}
Let $M$ be a complete K\"ahler manifold. Then $M$ satisfies the three circle theorem if and only if the holomorphic sectional curvature is nonnegative. Here, a complete K\"ahler manifold is said to satisfy the three circle theorem if for any $p\in M$, $R>0$  and nonzero holomorphic function $f$ on $B_p(R)$, $\log M(f,r)$ is a convex function with respect to $\log r$ for $r\in (0,R)$ where $M(f,r)=\max_{x\in B_p(r)}|f(x)|$.
\end{thm}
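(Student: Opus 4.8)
The plan is to establish both implications by a maximum–principle argument on metric annuli, comparing $u:=\log|f|$ with an appropriate multiple of $\log r$, where $r(x)=d(p,x)$.

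\emph{Three circle $\Rightarrow$ nonnegative holomorphic sectional curvature.} I would argue contrapositively. If $H(p)(v)<0$ for some $p\in M$ and some unit $(1,0)$–direction $v$, choose K\"ahler normal coordinates centred at $p$ with $\partial/\partial z^1|_p$ along $v$ and take $f=z^1$ on a small ball $B_p(R)$ inside the chart. A routine Taylor expansion of $g$, carried out along the minimal geodesics $\g_r$ from $p$ to the maximising points $q_r$ of $|z^1|$ on $\partial B_p(r)$, yields $M(z^1,r)=Cr\left(1+c\,H(p)(v)\,r^2+o(r^2)\right)$ with $C,c>0$; the sign is pinned down by $z^1(q_r)=\int_0^r dz^1(\dot\g_r)\,dt\le\int_0^r|dz^1|_g\,dt$ together with the expansion of $|dz^1|_g^2=g^{1\bar1}$ along the $z^1$–axis. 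Writing $s=\log r$, the function $s\mapsto\log M(z^1,e^s)=\mathrm{const}+s+cH(p)(v)e^{2s}+o(e^{2s})$ then has strictly negative second derivative as $s\to-\infty$, so $\log M(z^1,r)$ is not convex in $\log r$ near $0$, contradicting the three circle property.

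\emph{Nonnegative holomorphic sectional curvature $\Rightarrow$ three circle.} Fix $p$, $R$ and a nonzero holomorphic $f$ on $B_p(R)$; by completeness and the identity theorem, $M(f,r)$ is finite, positive and nondecreasing on $(0,R)$, so it suffices to prove the three circle inequality for arbitrary $0<r_1<r_2<r_3<R$. Let $L(\rho)=a\log\rho+b$ (with $a\ge0$) be the affine–in–$\log\rho$ interpolant of $\log M(f,\cdot)$ through $(r_1,\log M(f,r_1))$ and $(r_3,\log M(f,r_3))$, and set $h=u-a\log r-b$ on the compact annulus $A=\{r_1\le r(x)\le r_3\}$. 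The maximum principle for the plurisubharmonic function $|f|$ gives $\max_{\{r=\rho\}}|f|=M(f,\rho)$, so $h\le0$ on $\partial A=\{r=r_1\}\cup\{r=r_3\}$, and the claim is exactly $h\le0$ on $\{r=r_2\}$. If $a=0$ then $M(f,\cdot)$ is constant on $[r_1,r_3]$ and we are done, so assume $a>0$ and suppose $m:=\max_A h>0$, attained at $q$; then $f(q)\ne0$, $u$ is smooth and pluriharmonic near $q$, and $q$ lies in the open annulus. Put $\phi(\rho)=\log\frac{\rho}{r_1}\log\frac{r_3}{\rho}$, so that $\phi\ge0$ on $[r_1,r_3]$, $\phi(r_1)=\phi(r_3)=0$, and $(\rho\phi'(\rho))'=-2/\rho<0$; for small $\e>0$ let $h_\e=u-a\log r-b-\e\phi(r)$, so that $h_\e\le0$ on $\partial A$ while $h_\e(q)>0$, whence $h_\e$ attains a positive maximum at an interior point $q_\e\in A$. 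Using Calabi's trick (replace $r$ by $d(p_\d,\cdot)+d(p,p_\d)$ for $p_\d$ slightly along the minimal geodesic $\g$ from $p$ to $q_\e$, which for small $\e,\d$ stays below $h_\e$ and keeps the maximum at $q_\e$, then let $\d\to0$) I may assume $r$ is smooth near $q_\e$ and $\Hess r(JX,JX)(q_\e)\le 1/r(q_\e)$, where $X=\dot\g$ is the unit radial vector at $q_\e$.

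Now the pointwise computation at $q_\e$. Since $u$ is pluriharmonic, $\Hess u$ has type $(2,0)+(0,2)$, so $\Hess u(JX,JX)=-\Hess u(X,X)$. From $\Hess h_\e(X,X)\le0$ and $\Hess(a\log r+b+\e\phi(r))(X,X)=-a/r^2+\e\phi''(r)$ along $\g$ I get $\Hess u(X,X)\le-a/r^2+\e\phi''(r)$; from $\Hess h_\e(JX,JX)\le0$, $dr(JX)=0$ and $\Hess(a\log r+b+\e\phi(r))(JX,JX)=(a/r+\e\phi'(r))\Hess r(JX,JX)$ I get $\Hess u(JX,JX)\le(a/r+\e\phi'(r))\Hess r(JX,JX)$. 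The geometric input is the index form estimate: comparing the Jacobi field along $\g$ that computes $\Hess r(JX,JX)$ with the test field $t\mapsto(t/r)J\dot\g(t)$ (parallel because $J$ is parallel), and using that the sectional curvature of $\mathrm{span}(\dot\g,J\dot\g)$ equals the holomorphic sectional curvature $H(\dot\g)\ge0$, one gets
\[
\Hess r(JX,JX)(q_\e)\le\frac1r-\frac1{r^2}\int_0^{r}t^2H(\dot\g(t))\,dt\le\frac1r,\qquad r=r(q_\e).
\]
Combining the three inequalities (and $a/r+\e\phi'(r)>0$) yields $a/r^2-\e\phi''(r)\le a/r^2+\e\phi'(r)/r$, that is $(r\phi'(r))'\ge0$ at $r(q_\e)\in(r_1,r_3)$, contradicting $(\rho\phi'(\rho))'=-2/\rho<0$. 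Hence $\max_A h_\e\le0$; letting $\e\to0^+$ gives $m\le0$, a contradiction. So $h\le0$ on $A$, which is the three circle inequality, and since $r_1<r_2<r_3$ were arbitrary, $\log M(f,r)$ is convex in $\log r$ on $(0,R)$.

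I expect the main obstacle to be the interplay of the cut locus with the equality case of the curvature comparison: a bare maximum principle only forces the rigid scenario $H(\dot\g)\equiv0$ along $\g$, and the role of the perturbation $\e\phi(r)$ together with Calabi's trick — in effect the ``general maximum principle'' advertised in the abstract — is precisely to upgrade that rigidity to a genuine contradiction with no smoothness or strict–sign hypotheses. The remaining ingredients (the type decomposition of $\Hess u$ for pluriharmonic $u$, the index form estimate, and the Taylor expansion in the converse) are routine.
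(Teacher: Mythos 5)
Your proposal follows essentially the strategy of Liu's original proof, which is also the skeleton of this paper's generalizations: Theorem \ref{thm-Liu} is quoted here from \cite{Liu}, and the two directions are proved in extended form --- the ``if'' direction via the comparison $L[\log r]\le 0$ (Theorem \ref{thm-comparison}) fed into a Calabi-type maximum principle on annuli (Theorems \ref{thm-max-principle} and \ref{thm-general-three-circle}), the ``only if'' direction via a Taylor expansion of $M_o(|f|,r)$ along geodesics (Section 5). Your ``if'' direction is correct and complete: the index-form estimate $\Hess(r)(J\nabla r,J\nabla r)\le 1/r$, the type decomposition of $\Hess(\log|f|)$, the barrier $\e\phi(r)$ with $(\rho\phi'(\rho))'<0$, and Calabi's trick fit together exactly as you describe. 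The paper reaches the same conclusion with a different perturbation (the logarithmic reparametrizations $a_\delta\ln(e^u+\delta)-b_\delta$ of $u$ and $v$ in Theorem \ref{thm-max-principle} rather than an additive $\e\phi(r)$), but the two devices are interchangeable in this application.

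Two steps in your ``only if'' direction fail as written, though both are repairable. First, the sign of the correction in $M(z^1,r)=Cr\left(1+cH\,r^2+o(r^2)\right)$ cannot be pinned down by expanding $g^{1\bar 1}$ along the $z^1$-axis alone: the maximizer of $|z^1|$ on $S_p(r)$ need not lie on that axis, and a geodesic leaving $p$ in a direction $X$ with $R_{1\bar 1k\bar l}X^k\bar X^l>0$ could a priori carry $|z^1|$ above its value on the axis. One must expand $z^1(\gamma_X(t))$ for an arbitrary unit initial direction $X$ and then optimize over $X$, using that competing directions lose in the leading term $|X^1|t$; this optimization is precisely the role of the auxiliary function $F_t(x)$ in the proof of Theorem \ref{thm-con-1-holo}. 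Second, you cannot conclude a ``strictly negative second derivative as $s\to-\infty$'' from an expansion with an $o(e^{2s})$ error term, since that error is not controlled in $C^2$. The correct deduction is at the level of values: convexity of $\log M$ in $\log r$ together with $\log M(z^1,r)-\log r\to\log C$ as $r\to0^+$ forces $M(z^1,r)/r$ to be nondecreasing (this is item (4) of Theorem \ref{thm-mono-2} with $\ord_o(z^1)=1$), whereas your expansion gives $M(z^1,r)/r<C$ for small $r>0$ when $H<0$ --- a contradiction requiring no differentiation. With these two repairs, which are exactly the computations carried out in Section 5, the argument is sound.
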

In \cite{Liu}, Liu also extend the "if" part of the three circle theorem above to much more general K\"ahler manifolds (See \cite[Theorem 8]{Liu} ).  The purpose of this paper is to extend Liu's results in \cite{Liu} on K\"ahler manifolds to Hermitian manifolds or almost Hermitian manifolds and give some applications. We would also like to mention that an analogue of Liu's three circle theorem in the case of CR manifolds was obtained in \cite{CHL}.

Recall that an almost Hermitian manifold $(M^{2n},J,g)$ is a manifold $M$ of real dimension $2n$ equipped with an almost complex structure $J$ and a compatible Riemannian metric $g$:
$$g(JX,JY)=g(X,Y)$$
for any tangent vectors $X$ and $Y$. When $J$ is integrable, $(M^{2n},J,g)$ is called a Hermitian manifold. The Levi-Civita connection on $(M,J,g)$ denoted as $\nabla$ is torsion free and compatible with $g$ but may not be compatible with $J$. In fact, $\nabla J\not\equiv 0$ unless $(M,J,g)$ is K\"ahler. The Chern connection on $(M,J,g)$ denoted as $D$ is the unique connection on $M$ compatible with $J$ and $g$, and with vanishing $(1,1)$ part of the torsion. More precisely, let
\begin{equation}
\tau(X,Y)=D_XY-D_YX-[X,Y]
\end{equation}
be the torsion of $D$. Then,
\begin{equation}\label{eq-vanishing-1-1}
\tau(\xi,\bar\eta)=0
\end{equation}
for any $(1,0)$-vectors $\xi$ and $\eta$. Or equivalently,
\begin{equation}\label{eq-vanishing-1-1-J}
\tau(X,JY)=\tau(JX,Y)
\end{equation}
for any tangent vectors $X$ and $Y$. All the linear operations on real vector fields are extended to complex vector fields by complex linear extension. Moreover, for any $\omega\in A^{p,q}(M)$, define $\p \omega:=(d\omega)^{p+1,q}$ and $\dbar\omega:=(d\omega)^{p,q+1}$.  The same as on complex manifolds, a complex-valued function $f$ on $M$ is said to be holomorphic if $\dbar f=0$, and a real-valued function $u$ on $M$ is said to be pluri-subharmonic if $\iddbar u$ is a nonnegative $(1,1)$-form.

By examining Liu's work \cite{Liu}, we know that a key observation in Liu's argument is a comparison result for $\Hess(r)(J\nabla r,J\nabla r)$ where $\Hess$ means the Hessian operator with respect to Levi-Civita connection $\nabla$ and $r$ is the distance function with respect to some fixed point. On almost Hermitian manifolds, such a comparison was essentially already known in Gray's work \cite{Gray} using Levi-Civita connection and in the first named author's work \cite{Yu2} using Chern connection. Such a comparison was also obtained for Hermitian manifolds in the work \cite{CY} of Chen-Yang. More precisely, all of these works contain the following conclusion in an implicit form:
\begin{equation}\label{eq-comparison}
L[\log r]\leq 0
\end{equation}
within the cut-locus of the fixed point under the curvature assumption
\begin{equation}\label{eq-holo-sec-L}
R^L(X,JX,JX,X)-\|(\nabla_X J)X\|^2\geq 0
\end{equation}
for any tangent vector $X$ where $R^L$ is the curvature tensor for the Levi-Civita connection (see \cite{Gray}), or equivalently
\begin{equation}\label{eq-holo-sec-C}
R_{1\bar 11\bar 1}-\sum_{i=2}^n|\tau_{i1}^1+\tau_{i1}^{\bar 1}|^2\geq 0
\end{equation}
for any unitary $(1,0)$-frame $e_1,e_2,\cdots,e_n$ where $R$ is the curvature tensor for the Chern connection (see \cite{CY} for the Hermitian case and \cite{Yu2} for the almost Hermitian case). Here,
\begin{equation}
L[u]:=\Hess(u)(\nabla u,\nabla u)+\Hess(u)(J\nabla u,J\nabla u).
\end{equation}
The equivalence of \eqref{eq-holo-sec-L} and \eqref{eq-holo-sec-C} can be seen by using the curvature identities derived by the first named author in \cite{Yu1} (See Lemma \ref{lem-curv-holo-sec} for details).  According to these results, we introduce the following notion.
\begin{defn}\label{def-holo-sec}
Let $(M^{2n},J,g)$ be an almost Hermitian manifold and $\xi$ be a unit $(1,0)$-vector. Define the holomorphic sectional curvature $H(\xi)$ along the direction $\xi$ as
\begin{equation}
H(\xi):=R_{1\bar 11\bar 1}-\sum_{i=2}^n|\tau_{i1}^1+\tau_{i 1}^{\bar 1}|^2
\end{equation}
where we have fixed a unitary frame $(e_1,e_2,\cdots,e_n)$ with $e_1=\xi$. Let $\xi=\frac{1}{\sqrt 2}(X-\ii JX)$ with $X$ a unit real tangent vector. Then, we also write $H(\xi)$ as $H(X)$.
\end{defn}
By following the argument in \cite{Yu2}, we are able to obtain a general comparison result.
\begin{thm}\label{thm-comparison}
Let $(M^{2n},J,g)$ be a complete almost Hermitian manifold  and $o\in M$. Let $h\in C^1((0,R))$ be such that
\begin{enumerate}
\item $\D\lim_{t\to 0^+}th(t)=1$, and
\item for any normal minimal geodesic $\gamma:[0,l]\to B_o(R)$ starting at $o$,
\begin{equation}\label{eq-Riccati-h}
h'(t)+h^2(t)+H(\gamma'(t))\geq 0
\end{equation}
for $t\in (0,l]$.
\end{enumerate}
 Let $v\in C^2((0,R))$ be such that
\begin{enumerate}
\item $v'(t)>0$ for $t\in (0,R)$;
\item $v''(t)+hv'(t)\leq 0$ for $t\in (0,R)$.
\end{enumerate}
Then
$$L[v(r_o)]\leq 0$$
in $B'_o(R):=B_o(R)\setminus\{o\}$, in the sense of barrier, where $r_o(x)=r(o,x)$. In particular, if the holomorphic sectional curvature of $(M,J,g)$ in $B_o(R)$ is not less than a constant $K$, then
\begin{equation}\label{eq-comparison-K}
L[\log(\tn_K(r_o/2))]\leq 0
\end{equation}
in $B'_o(R)$, in the sense of barrier, where
\begin{equation}
\tn_K(t)=\left\{\begin{array}{cl}\frac{1}{\sqrt K}\tan(\sqrt Kt)&K>0\\
t&K=0\\
\frac{1}{\sqrt{-K}}\tanh(\sqrt{-K}t)&K<0.
\end{array}\right.
\end{equation}
\end{thm}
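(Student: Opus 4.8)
The plan is to reduce the inequality $L[v(r_o)]\le 0$ to a Hessian comparison for the distance function, and to prove the comparison by a second-variation argument, with Calabi's trick handling the cut locus. Writing $r=r_o$, a direct chain-rule computation at points where $r$ is smooth, using $\|\nabla r\|=1$, $\Hess(r)(\nabla r,\nabla r)=0$ and $g(\nabla r,J\nabla r)=0$, gives
\[
L[v(r)]=v'(r)^2\Bigl(v''(r)+v'(r)\,\Hess(r)(J\nabla r,J\nabla r)\Bigr).
\]
Since $v'>0$ and $v''+hv'\le 0$, the theorem will follow once I show $\Hess(r)(J\nabla r,J\nabla r)\le h(r)$ on $B'_o(R)$ in the sense of barrier.

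For this, fix $x$ outside the cut locus of $o$ and let $\gamma\colon[0,l]\to M$ be the minimizing unit-speed geodesic from $o$ to $x$, $l=r(x)$. Set $f(t)=\exp\bigl(-\int_t^l h(s)\,ds\bigr)$; the condition $t h(t)\to 1$ as $t\to 0^+$ forces $f(0)=0$ and $f(t)f'(t)\to 0$, while $f'=hf>0$ and $f''=(h'+h^2)f$, so by \eqref{eq-Riccati-h} one has $f''+H(\gamma'(t))f\ge 0$ on $(0,l]$. As $J\gamma'(l)\perp\gamma'(l)$, the second variation of arclength (valid because $\gamma$ is minimizing and $x$ is not a cut point) bounds $\Hess(r)(J\nabla r,J\nabla r)\big|_x$ from above by the index form $\int_0^l\bigl(\|\nabla_{\gamma'}V\|^2-R^L(V,\gamma',\gamma',V)\bigr)\,dt$ of the variation field $V(t)=f(t)J\gamma'(t)$, which has $V(0)=0$ and $V(l)=J\gamma'(l)$. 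Since $\nabla_{\gamma'}V=f'J\gamma'+f(\nabla_{\gamma'}J)\gamma'$ and differentiating $\|J\gamma'\|^2\equiv 1$ gives $\langle(\nabla_{\gamma'}J)\gamma',J\gamma'\rangle=0$, one finds $\|\nabla_{\gamma'}V\|^2=f'^2+f^2\|(\nabla_{\gamma'}J)\gamma'\|^2$, and then, by Definition \ref{def-holo-sec} together with Lemma \ref{lem-curv-holo-sec}, the integrand is exactly $f'^2-f^2H(\gamma'(t))$. Integrating by parts and using $f(0)=0$, $f(l)=1$ and $f(f''+Hf)\ge 0$,
\[
\int_0^l\bigl(f'^2-f^2H(\gamma'(t))\bigr)\,dt=f'(l)-\int_0^l f\,(f''+Hf)\,dt\le f'(l)=h(l),
\]
which is the desired comparison at $x$.

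To reach the cut locus I would invoke Calabi's trick: at a cut point $x_0$ choose a minimizing geodesic $\gamma$ from $o$ to $x_0$ and put $o_\delta=\gamma(\delta)$ for small $\delta>0$; then $x_0$ is no longer a cut point of $o_\delta$, and, since $v$ is increasing and $\delta+r_{o_\delta}\ge r$ with equality at $x_0$, the function $\phi_\delta:=v(\delta+r_{o_\delta})$ is a $C^2$ upper barrier of $v(r)$ at $x_0$. Carrying out the previous computation along $\gamma|_{[\delta,l]}$ with a variation field equal to $J\gamma'$ times a function vanishing at $\delta$, equal to $1$ at $l$, and converging to $f$ as $\delta\to 0$ (for instance $f$ corrected by an affine term), and then letting $\delta\to 0$, yields $\limsup_{\delta\to 0}L[\phi_\delta](x_0)\le v'(l)^2\bigl(v''(l)+v'(l)h(l)\bigr)\le 0$, which is the barrier inequality at $x_0$. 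Finally, to deduce \eqref{eq-comparison-K}, take $h(t)=\sn_K'(t)/\sn_K(t)$ and $v(t)=\log\tn_K(t/2)$: then $v'=1/\sn_K>0$, $v''+hv'=0$, $t h(t)\to 1$, and $h'+h^2+K=0$, so the hypotheses hold whenever $H(\gamma'(t))\ge K$, and the general statement applies.

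The step I expect to be the main obstacle is the cut-locus passage: one must check carefully that $x_0$ really does leave the cut locus of $o_\delta$ for every small $\delta$ — that is, that the minimizing geodesic from $o_\delta$ to $x_0$ is unique and conjugate-point-free, both consequences of $\gamma$ being globally minimizing on $[0,l]$ — and that replacing $f$ by the truncated field costs only a term tending to $0$. By contrast, the interior computation is routine once the test function $f$ has been read off from the Riccati function $h$.
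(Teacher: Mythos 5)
Your proof is correct, but the central comparison is established by a genuinely different mechanism from the one in the paper. Both arguments make the same reduction $L[v(r)]=(v')^2(v''+v'L[r])$ with $L[r]=\Hess(r)(J\nabla r,J\nabla r)$, and both handle the cut locus by Calabi's trick of shifting the base point to $\gamma(\delta)$. For the key inequality $L[r]\le h(r)$ off the cut locus, however, the paper sets $f(t)=L[r](\gamma(t))$ and invokes the Chern-connection computation from the proof of Theorem 3.2 in \cite{Yu2} to obtain the Riccati inequality $f'+f^2+H(\gamma')\le 0$ (displayed as \eqref{eq-f}), then concludes by Royden's comparison theorem for Riccati equations \cite{Ro}; at a cut point it corrects $h_\delta$ by an auxiliary solution $w_\delta$ of $w'+w^2+2h_\delta w=0$ rather than by your affine modification of the test function. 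You instead run the classical second-variation/index-form argument with the field $f(t)J\gamma'(t)$, $f(t)=\exp(-\int_t^l h)$, using Lemma \ref{lem-curv-holo-sec} to identify $\|(\nabla_{\gamma'}J)\gamma'\|^2-R^L(J\gamma',\gamma',\gamma',J\gamma')$ with $-H(\gamma')$; this is essentially the Levi-Civita route of Gray \cite{Gray} and of Liu's K\"ahler argument \cite{Liu}, and it is more self-contained since it imports nothing from \cite{Yu2} beyond the curvature identity already proved in Section 2. Two small points to tighten: your test field is only continuous at $t=0$, with $f(t)=t^{1+o(1)}$ and $(f')^2=h^2f^2$ merely integrable, so the index lemma should be applied after the usual truncation near $t=0$; and the estimate for the affine-corrected field at a cut point needs the (true, by compactness of the geodesic segment) boundedness of $H(\gamma'(t))$ to see that the extra term tends to $0$ with $\delta$. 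Neither affects correctness. One thing the paper's route buys that yours does not: the identity \eqref{eq-f} records exactly which torsion quantities must vanish in the equality case, and this is reused verbatim in the rigidity part of Theorem \ref{thm-dim}; with the index-form bound alone the equality analysis would have to be redone separately.
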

Here, because the distance function may not be smooth and motivated by Calabi's trick \cite{Ca}, we say that a continuous function $v$ on a open subset $\Omega$ of $M$ satisfies
$L[v]\leq 0$
in the sense of barrier. If for any $\e>0$ and $x\in \Omega$, there is a smooth function $v_{x,\e}$ defined on an open neighborhood $U_{x,\e}\subset \Omega$ of $x$ such that $v\leq v_{x,\e}$ on $U_{x,\e}$, $v(x)=v_{x,\e}(x)$ and $L[v_{x,\e}](x)\leq \e$.

The three circle theorem we obtained for almost Hermitian manifolds is as follows.

\begin{thm}\label{thm-three-circle-AH}
Let $(M^{2n},J,g)$ be a complete noncompact almost Hermitian manifold  and $o\in M$. Let $h\in C^1((0,R))$ be such that
\begin{enumerate}
\item $\D\lim_{t\to 0^+}th(t)=1$, and
\item for any normal minimal geodesic $\gamma:[0,l]\to B_o(R)$ starting at $o$,
\begin{equation}\label{eq-Riccati-h}
h'(t)+h^2(t)+H(\gamma'(t))\geq 0
\end{equation}
for $t\in (0,l]$.
\end{enumerate} Let $v\in C^2((0,R))$ be such that
\begin{enumerate}
\item $v'(t)>0$ for $t\in (0,R)$;
\item $v''(t)+hv'(t)\leq 0$ for $t\in (0,R)$.
\end{enumerate}
Then, for any $u\in C^2(B_o(R))$ with $L[u]\geq 0$, $M_o(u,r)$ is a convex function of $v(r)$ for $r\in (0,R)$. Moreover, if the holomorphic sectional curvature of $(M,J,g)$ in $B_o(R)$ is not less than a constant $K$, then $M_o(u,r)$ is a convex function of $\log\tn_K(r/2)$ for $r\in (0,R)$. In particular,
\begin{enumerate}
\item if $u\in C^2(B_o(R))$ is a pluri-subharmonic function, then $M_o(u,r)$ is a convex function of $v(r)$ for $r\in (0,R)$;
\item for any holomorphic functions $f_1,f_2,\cdots,f_k$ on $B_o(R)$, $\log M_o(|f_1|^2+|f_2|^2+\cdots+|f_k|^2,r)$ is a convex function of $v(r)$ for $r\in (0,R)$;
\item if the holomorphic sectional curvature of $(M,J,g)$ in $B_o(R)$ is not less than a constant $K$ and $u\in C^2(B_o(R))$ is a pluri-subharmonic function, then $M_o(u,r)$ is a convex function of $\log\tn_k(r/2)$ for $r\in (0,R)$;
\item if the holomorphic sectional curvature of $(M,J,g)$ in $B_o(R)$ is not less than a constant $K$, then $\log M_o(|f_1|^2+|f_2|^2+\cdots+|f_k|^2,r)$ is a convex function of $\log\tn_k(r/2)$ for $r\in (0,R)$ and any holomorphic functions $f_1,f_2,\cdots,f_k$ on $B_o(R)$.
\end{enumerate}
 \end{thm}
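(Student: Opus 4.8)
\emph{Overview.} The plan is to deduce everything from the comparison estimate of Theorem \ref{thm-comparison} together with a maximum principle for the degenerate fully nonlinear operator $L$, run in the spirit of Calabi's trick. I treat only the ``general $v$'' statement; the statement under the curvature lower bound $H\ge K$ is obtained verbatim with $v(r):=\log\tn_K(r/2)$, whose required property $L[\log\tn_K(r_o/2)]\le 0$ (in the sense of barrier) is precisely \eqref{eq-comparison-K}. \emph{Step 1 (reduction).} Fix $0<r_1<r_3<R$. Since $\ol{B_o(r_1)}\subset\ol{B_o(r_3)}$, the quantity $M_o(u,\cdot)$ is nondecreasing, so $a:=(M_o(u,r_3)-M_o(u,r_1))/(v(r_3)-v(r_1))\ge 0$; if $a=0$ then $M_o(u,\cdot)$ is constant on $[r_1,r_3]$ and the claimed convexity is trivial there, so assume $a>0$. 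Pick $b$ with $\phi:=av(r_o)+b\equiv M_o(u,r_i)$ on $\p B_o(r_i)$ ($i=1,3$). Then the desired convexity of $M_o(u,\cdot)$ in $v$ is exactly the assertion that $u\le\phi$ on the closed annulus $\ol A:=\{r_1\le r_o\le r_3\}$ (take the maximum over each sphere $\{r_o=r_2\}$, $r_1<r_2<r_3$, and let $r_1,r_3$ vary). So it suffices to prove $u\le\phi$ on $\ol A$ for all such $r_1,r_3$.

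\emph{Step 2 (strict barrier and maximum principle).} By Theorem \ref{thm-comparison} and $v'>0$, $L[v(r_o)]\le 0$; and for a radial function $F(r_o)$ a short computation gives $L[F(r_o)]=(F'(r_o))^2\big(F''(r_o)+\Hess(r_o)(J\nabla r_o,J\nabla r_o)F'(r_o)\big)$, so the former unwinds to $\Hess(r_o)(J\nabla r_o,J\nabla r_o)\le -v''(r_o)/v'(r_o)$ in the sense of barrier on $\ol A$. Because $L$ vanishes at critical points, no bare maximum principle is available, so I perturb: let $g\in C^2([r_1,r_3])$ be the unique solution of $g''-(v''/v')g'=-1$ with $g(r_1)=g(r_3)=0$ (it exists, $g>0$ on $(r_1,r_3)$ and $g'$ is bounded, by the one-dimensional maximum principle), and set $\phi_\e:=av(r_o)+b+\e g(r_o)$. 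For $\e>0$ small the radial profile $F_\e:=av+b+\e g$ is still strictly increasing, $\phi_\e=\phi$ on $\p A$, and
$$L[\phi_\e]\le(F_\e'(r_o))^2\big(F_\e''(r_o)-\tfrac{v''(r_o)}{v'(r_o)}F_\e'(r_o)\big)=-\e\,(F_\e'(r_o))^2<0$$
in the sense of barrier on $\ol A$. Suppose $u\le\phi_\e$ fails on $\ol A$; since $u\le M_o(u,r_i)=\phi_\e$ on $\p A$, the function $u-\phi_\e$ attains a positive maximum at an interior point $x_0\in A$. If $x_0$ lies in the cut locus of $o$, replace $r_o$ near $x_0$ by the smooth upper barrier $s+r(\g(s),\cdot)$ (Calabi's trick, with $\g$ a minimizing geodesic from $o$ to $x_0$ and $s>0$ small); since $F_\e$ is increasing, composing gives a smooth $\psi\ge\phi_\e$ near $x_0$ with $\psi(x_0)=\phi_\e(x_0)$, $\nabla\psi(x_0)\ne 0$, and, by the Riccati comparison used in Theorem \ref{thm-comparison} (taking $s$ small), $L[\psi](x_0)<0$. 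At the maximum, $\nabla u(x_0)=\nabla\psi(x_0)=:X\ne 0$ and $\Hess(u)(x_0)\le\Hess(\psi)(x_0)$, hence
$$0\le L[u](x_0)=\Hess(u)(X,X)+\Hess(u)(JX,JX)\le\Hess(\psi)(X,X)+\Hess(\psi)(JX,JX)=L[\psi](x_0)<0,$$
a contradiction. Thus $u\le\phi_\e$ on $\ol A$ for all small $\e>0$, and letting $\e\to 0^+$ gives $u\le\phi$ on $\ol A$, which is what Step 1 requires. As noted, the statement under $H\ge K$ follows identically with $v(r)=\log\tn_K(r/2)$.

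\emph{Step 3 (the four applications).} Each reduces to verifying $L[u]\ge 0$. For (1) and (3): if $u$ is pluri-subharmonic then, taking $X=\nabla u$ in the pointwise identity expressing $\Hess(u)(X,X)+\Hess(u)(JX,JX)$ as a positive multiple of the value of $\iddbar u$ on the $(1,0)$-vector $\tfrac1{\sqrt2}(X-\ii JX)$ plus torsion terms built from $\nabla_\bullet J$ and $du$, those torsion terms vanish (for instance $g\big(\nabla u,(\nabla_{\nabla u}J)(J\nabla u)\big)=0$, by skew-symmetry of $\nabla_\bullet J$ together with $(\nabla_YJ)J=-J(\nabla_YJ)$; cf. the algebraic identities used to pass between \eqref{eq-holo-sec-L} and \eqref{eq-holo-sec-C}), so $L[u]\ge 0$; now apply the statement proved above (with general $v$ for (1), with $v=\log\tn_K(r/2)$ for (3)). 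For (2) and (4): given holomorphic $f_1,\dots,f_k$ and $\d>0$, set $\rho_\d:=\d+|f_1|^2+\cdots+|f_k|^2$; using $\dbar f_i=0$ and $\p\dbar=-\dbar\p$ on functions one gets $\iddbar\rho_\d=\ii\sum_i\p f_i\wedge\overline{\p f_i}\ge 0$, whence $\iddbar\log\rho_\d=\rho_\d^{-2}\big(\rho_\d\,\iddbar\rho_\d-\ii\p\rho_\d\wedge\dbar\rho_\d\big)\ge 0$ by Cauchy--Schwarz; so $u_\d:=\log\rho_\d$ is $C^2$ and pluri-subharmonic, hence by (1) (resp.\ (3)) $M_o(u_\d,r)=\log\big(\d+M_o(|f_1|^2+\cdots+|f_k|^2,r)\big)$ is convex in $v(r)$ (resp.\ in $\log\tn_K(r/2)$); letting $\d\to 0^+$ and using that a pointwise limit of convex functions is convex finishes the proof (if the $f_i$ have a common zero in $B_o(R)$ they vanish identically and the statement is trivial).

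\emph{Main obstacle.} The only genuine difficulty is that $L$ is not elliptic: it degenerates to $0$ wherever the gradient vanishes, so Theorem \ref{thm-comparison} by itself yields no maximum principle. The whole point is to organize the comparison so that a bona fide interior maximum of $u-\phi_\e$ forces $\nabla u=\nabla\phi_\e\ne 0$ there — which is exactly where $a>0$ and $v'>0$ enter, through the strict radial monotonicity of $\phi_\e$ — so that at that point $L$ behaves like the linear elliptic form $w\mapsto\Hess(w)(X,X)+\Hess(w)(JX,JX)$ along the fixed direction $X$. Making the barrier inequality strict (the auxiliary function $g$) and removing the cut locus (Calabi's trick, legitimate precisely because $F_\e$ is increasing) are the remaining technical points; all curvature input — and the almost-Hermitian subtleties coming from $\nabla J$ — has already been packaged into Theorem \ref{thm-comparison}.
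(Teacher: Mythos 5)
Your argument is correct and reaches the theorem by the same overall strategy as the paper --- reduce convexity of $M_o(u,\cdot)$ in $v$ to the two-point comparison $u\le av(r_o)+b$ on an annulus, feed in Theorem \ref{thm-comparison} for the radial barrier, remove the cut locus by Calabi's trick, and derive a contradiction at an interior maximum where the two gradients agree and are nonzero --- but the implementation of the crucial maximum-principle step is genuinely different. The paper first proves an abstract maximum principle (Theorem \ref{thm-max-principle}) in which \emph{both} competitors are perturbed, via $u_\delta=a_\delta\ln(e^u+\delta)-b_\delta$ and $v_\delta=A_\delta\ln(e^v-\delta)+B_\delta$; these transformations generate strictly signed terms proportional to $\sum_i\vv<T_i\nabla u,\nabla u>^2$, whose nonvanishing is guaranteed by the nondegeneracy hypothesis (5), and the general three circle theorem (Theorem \ref{thm-general-three-circle}) then follows formally, after which Theorem \ref{thm-three-circle-AH} is a two-line application. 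You instead achieve the needed strictness by perturbing only the radial barrier, adding $\e g(r_o)$ with $g''-(v''/v')g'=-1$ and zero boundary data, so that $L[\phi_\e]\le-\e(F_\e')^2<0$; the degeneracy of $L$ at critical points is then harmless because the common gradient at the touching point is $F_\e'(r_o)\nabla r_o\neq0$. Both devices do the same job; the paper's buys two reusable general statements, yours is more self-contained and avoids the auxiliary hypotheses of Theorem \ref{thm-max-principle}. Your Step 3 coincides in substance with the paper's use of \eqref{eq-L-f-L}, Lemma \ref{lem-ddbar-f} and Lemma \ref{lem-holo-psh}. Two small points: when you invoke Calabi's trick you must (and do) note that the error it introduces in $L[\psi](x_0)$ can be made smaller than $\e(F_\e'(r_o(x_0)))^2$; and the parenthetical ``if the $f_i$ have a common zero in $B_o(R)$ they vanish identically'' is not what you mean --- a common zero is harmless for letting $\d\to0^+$; the only degenerate case is $M_o(|f_1|^2+\cdots+|f_k|^2,r)=0$ for some $r>0$, i.e.\ all $f_i$ vanishing on a whole ball, which forces $f_i\equiv0$ by unique continuation and makes the statement vacuous.
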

  The basic idea of proving the last theorem is similar to that in \cite{Liu}. However, motivated by the work \cite{Ca} of Calabi, we deal with the topic in a more systematic way by extracting the arguments in \cite{Liu} to formulate a general maximum principle (See Theorem \ref{thm-max-principle}) which seems have not been mentioned before, so that the general three circle theorem (See Theorem \ref{thm-general-three-circle}) becomes a straight forward consequence of the maximum principle. The general maximum principle and three circle theorem we derived may be useful in other applications. For example, before proving Theorem \ref{thm-three-circle-AH}, we obtain the following maximum principle for functions $u$ satisfying $L[u]\geq 0$ which is different with the maximum principle for subharmonic functions and seems not been mentioned before even for the K\"ahler case.
\begin{thm}\label{thm-max-principle-AH}
Let $(M^{2n}, J,g)$ be a complete noncompact almost Hermitian manifold, $\Omega$ be a precompact open subset in $M$ and $u\in C^2(\Omega)\cap C^0(\ol\Omega)$ such that $L[u]\geq 0$. Then, $u\leq \max_{\p\Omega}u$.
\end{thm}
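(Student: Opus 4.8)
The plan is to argue by contradiction, perturbing $u$ by a small multiple of a barrier function $\phi$ with $L[\phi]$ \emph{strictly} negative that is produced by Theorem~\ref{thm-comparison}, in the spirit of Calabi's trick. The only genuinely non-trivial feature of $L$ to be overcome is its degeneracy: $L[w](x)=0$ whenever $\nabla w(x)=0$, so $L[u]\ge 0$ gives \emph{no} information at critical points of $u$, in particular at an interior maximum; and $L$ is nonlinear, so additive perturbation does not act on $L$ linearly. Two elementary facts will be used. First, for $\psi\in C^2$ and $F\in C^2(\R)$, writing $\Hess$ through $\nabla$ and using $\langle X,JX\rangle=0$, one has the composition identity
\begin{equation*}
L[F(\psi)]=(F'(\psi))^3\,L[\psi]+(F'(\psi))^2F''(\psi)\,|\nabla\psi|^4 .
\end{equation*}
Second, if a continuous function $w$ on $\Omega$ has an interior maximum at $q$ and admits a smooth lower barrier $w_q$ at $q$ (i.e. $w_q\le w$ near $q$, $w_q(q)=w(q)$), then $q$ is a local maximum of $w_q$, so $\nabla w_q(q)=0$ and $\Hess(w_q)(q)\le 0$.

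Next I construct $\phi$. Since $\ol\Omega$ is compact and $M$ is noncompact, fix $o\in M\setminus\ol\Omega$, put $\delta_0=r(o,\ol\Omega)>0$ and $R=\sup_{\ol\Omega}r_o+1$; on the compact set $\ol{B_o(R)}$ the holomorphic sectional curvature is bounded below, so we may pick a constant $K<0$ with $H\ge K$ on $B_o(R)$. By Theorem~\ref{thm-comparison}, $\psi:=\log(\tn_K(r_o/2))$ satisfies $L[\psi]\le 0$ in $B'_o(R)\supset\ol\Omega$ in the sense of barrier; from the explicit Calabi-type form of the barriers (translating $o$ a distance $\e$ along a minimal geodesic to $q$ and composing $\log\tn_K(\cdot/2)$ with the resulting smooth distance function) the smooth upper barrier $\psi_{q,\e}$ of $\psi$ at $q$ satisfies $\psi_{q,\e}(q)=\psi(q)$, $L[\psi_{q,\e}](q)\le \e$, and $|\nabla\psi_{q,\e}(q)|=|(\log\tn_K)'(r_o(q)/2)|/2\ge c_0>0$ with $c_0$ independent of $q\in\ol\Omega$ and of small $\e$ (because $r_o(q)\in[\delta_0,R]$). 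Now take $F\in C^2(\R)$ increasing and strictly concave (e.g. $F(t)=-e^{-t}$) and set $\phi:=F(\psi)$, a bounded continuous function on a neighbourhood of $\ol\Omega$; since $F$ is increasing, $\phi_{q,\e}:=F(\psi_{q,\e})$ are again upper barriers for $\phi$, and applying the composition identity to them together with $L[\psi_{q,\e}](q)\le\e$, $F'>0$, $F''<0$ and $|\nabla\psi_{q,\e}(q)|\ge c_0$ yields constants $c_1>0$, $\e_0>0$ such that for every $q\in\ol\Omega$ and $\e\in(0,\e_0]$,
\begin{equation*}
\phi_{q,\e}(q)=\phi(q),\qquad L[\phi_{q,\e}](q)\le-c_1 .
\end{equation*}

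Now the perturbation argument. Fix $\varepsilon>0$ and set $v:=u-\varepsilon\phi\in C^0(\ol\Omega)$. I claim $v$ has no maximum point in the interior $\Omega$. Indeed, if $q\in\Omega$ were one, choose $\e\in(0,\e_0]$ small enough that $\phi_{q,\e}$ is defined on a neighbourhood of $q$ contained in $\Omega$; then $v_{q,\e}:=u-\varepsilon\phi_{q,\e}$ is a smooth lower barrier for $v$ at $q$, so by the second fact $\nabla u(q)=\varepsilon\nabla\phi_{q,\e}(q)=:\varepsilon P$ and $\Hess(u)(q)\le\varepsilon\,\Hess(\phi_{q,\e})(q)$. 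Applying the last inequality to $P$ and to $JP$, adding, multiplying by $\varepsilon^2$, and using $L[u](q)\ge0$ gives
\begin{align*}
0\le L[u](q)&=\varepsilon^2\big(\Hess(u)(P,P)+\Hess(u)(JP,JP)\big)(q)\\
&\le\varepsilon^3\big(\Hess(\phi_{q,\e})(P,P)+\Hess(\phi_{q,\e})(JP,JP)\big)(q)=\varepsilon^3 L[\phi_{q,\e}](q)\le-\varepsilon^3 c_1<0,
\end{align*}
a contradiction. Hence $\max_{\ol\Omega}v=\max_{\p\Omega}v$, so $u=v+\varepsilon\phi\le\max_{\p\Omega}u+\varepsilon\,\mathrm{osc}_{\ol\Omega}(\phi)$ on $\ol\Omega$; letting $\varepsilon\to 0^+$ gives $u\le\max_{\p\Omega}u$.

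The main obstacle is obtaining a barrier with $L[\phi]$ \emph{uniformly strictly} negative. Theorem~\ref{thm-comparison} only delivers $L[\log\tn_K(r_o/2)]\le0$, and feeding that directly into the argument merely forces $L[u](q)=0$ at a would-be interior maximum — no contradiction. The concave composition $\phi=F(\psi)$, which turns the harmless factor $(F')^2F''|\nabla\psi|^4<0$ into a genuine negative term thanks to the nonvanishing of $|\nabla\psi|$ away from $o$, is exactly what upgrades $\le 0$ to $\le-c_1$; the remaining steps are routine barrier bookkeeping.
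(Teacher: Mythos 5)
Your proof is correct and rests on the same mechanism as the paper's: Calabi-type upper barriers for $\log\tn_K(r_o/2)$ supplied by Theorem \ref{thm-comparison}, combined with a strictly concave reparametrization that upgrades the degenerate inequality $L[\cdot]\le 0$ to a uniformly strict one via the $(F')^2F''|\nabla\psi|^4$ term, followed by the first/second derivative test at an interior maximum of the difference. The only difference is packaging: the paper routes the argument through the general maximum principle (Theorem \ref{thm-max-principle}), in whose proof the convex/concave perturbations $a_\delta\ln(e^u+\delta)$ and $A_\delta\ln(e^v-\delta)$ play exactly the role of your $F(t)=-e^{-t}$, whereas you inline the whole argument with a single perturbed comparison function.
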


For convenience of later application of Theorem \ref{thm-three-circle-AH}, we introduction the following two notions.
\begin{defn}
Let $q(t)$ be a nonnegative continuous function on $[0,+\infty)$ and $h$ be the solution of $$h'+h^2-q=0$$ with $\D\lim_{t\to 0^+}th(t)=1$. Then, the solution $v$ of $$v''+hv'=0$$
with $\D\lim_{t\to 0^+}\frac{v(t)}{\log t}=1$ is called the function associated to $q$. Define
\begin{equation}
I(q)=\exp\left(\int_0^{+\infty} tq(t)dt\right)
\end{equation}
when $\D\int_0^{+\infty}t q(t)dt<+\infty$.
\end{defn}
\begin{defn}
Let $(M^{2n},J,g)$ be a complete noncompact almost Hermitian manifold and $o\in M$. For $r>0$, define
\begin{equation*}
H_o(r)=\inf\{H(\gamma'(r))\ |\  \gamma:[0,r]\to M {\rm \ is\ a\ normal\ minimal\ geodesic\ with\  } \gamma(0)=o\}.
\end{equation*}
and
\begin{equation*}
H_o(0)=\min\{H(X)\ |\ X\in T_oM\ {\rm with}\ \|X\|=1\}.
\end{equation*}
\end{defn}

The first application of Theorem \ref{thm-three-circle-AH} is the following Liouville theorem for functions satisfying $L[u]\geq 0$ on almost Hermitian manifolds with slightly negative holomorphic sectional curvature. Such a Liouville theorem for pluri-subharmonic function on complete K\"ahler manifolds with nonnegative holomorphic bisectional curvature was first obtained by Ni-Tam \cite{NT}. Liu \cite{Liu} relaxed the curvature assumption to nonnegative holomorphic sectional curvature and extend it to functions satisfying $L[u]\geq 0$. Recently, under the different curvature assumption of nonnegative orthogonal holomorphic bisectional curvature and nonnegative Ricci curvature, Ni-Niu \cite{NN} also obtained the Liouville theorem for pluri-subharmonic functions on complete noncompact K\"ahler manifolds.
\begin{cor}\label{cor-Liouville-psh-2}
Let $(M^{2n},J,g)$ be a complete noncompact almost Hermitian manifold. Suppose for each $p\in M$, there is a nonnegative continuous function $q_p$ on $[0,+\infty)$ such that
\begin{enumerate}
\item $H_p(r)\geq -q_p(r)$ for any $r\geq0$, and
\item $\D\int_0^{+\infty}rq_p(r)dr<+\infty$.
\end{enumerate}
Let $u\in C^2(M,\R)$ satisfy $L[u]\geq 0$ and
\begin{equation}\label{eq-sub-log}
\liminf_{r\to+\infty}\frac{M_o(u,r)}{\log r}\leq 0.
\end{equation}
where $o$ is a fixed point on $M$. Then, $u$ must be a constant function. In particular, any pluri-subharmonic function $u$ satisfying \eqref{eq-sub-log} must be a constant function.
\end{cor}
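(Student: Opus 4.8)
The plan is to use Theorem~\ref{thm-three-circle-AH} at a single point to make $M_p(u,\cdot)$ a convex function of a suitable increasing reparametrisation of $r$, then to use the growth hypothesis \eqref{eq-sub-log} to force that convex function to be constant, and finally to bootstrap over all points of $M$. Fix $p\in M$ and write $q=q_p$; let $h$ be the solution of $h'+h^2-q=0$ with $\lim_{t\to0^+}th(t)=1$, and let $v$ be the function associated to $q$, so $v''+hv'=0$ and $\lim_{t\to0^+}v(t)/\log t=1$. I would first check that $(h,v)$ satisfies the hypotheses of Theorem~\ref{thm-three-circle-AH} with base point $p$ on every ball $B_p(R)$, hence on all of $M$: the normalisation of $th$ holds by construction; for a normal minimal geodesic $\gamma$ starting at $p$ and any $t>0$ in its domain, $\gamma|_{[0,t]}$ is again minimal, so $H(\gamma'(t))\ge H_p(t)\ge -q(t)$ by hypothesis~(1), and therefore $h'(t)+h^2(t)+H(\gamma'(t))\ge h'(t)+h^2(t)-q(t)=0$, which is \eqref{eq-Riccati-h}; and since $v'(t)=v'(1)\exp(-\int_1^t h(s)\,ds)$ is nowhere zero and is positive near $0$ (because $v\sim\log t$ there), $v'>0$ on $(0,+\infty)$, while $v''+hv'=0\le0$. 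Theorem~\ref{thm-three-circle-AH} then gives that $M_p(u,r)$ is a convex function of $v(r)$ on $(0,+\infty)$; note that $M_p(u,r)$ is finite (closed balls are compact), non-decreasing in $r$, and tends to $u(p)$ as $r\to0^+$.

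Next I would determine the behaviour of $v$ at the two ends, which is where the integrability hypothesis~(2) enters. A standard Riccati comparison with $q\ge0$ gives $h\ge1/t$, so $b:=h-1/t\ge0$; from $(t^2b)'=t^2(q-b^2)\le t^2q$ together with $t^2b\to0$ as $t\to0^+$ one obtains $0\le b(t)\le t^{-2}\int_0^t s^2q(s)\,ds$, and hence, by Fubini, $\int_0^{+\infty}b(t)\,dt\le\int_0^{+\infty}sq(s)\,ds<+\infty$. Therefore $\int_1^t h=\log t+\int_1^t b=\log t+O(1)$, so that $v'(t)=v'(1)\,t^{-1}\exp(-\int_1^t b)\sim c_p\,t^{-1}$ for some constant $c_p>0$ as $t\to+\infty$; combined with $v\sim\log t$ near $0$, this shows that $v$ is an increasing bijection of $(0,+\infty)$ onto $\R$ with $v(r)/\log r\to c_p$. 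Consequently I may write $M_p(u,r)=\psi_p(v(r))$ with $\psi_p$ convex and non-decreasing on all of $\R$.

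To finish, take $p=o$. Since $\psi_o$ is convex on $\R$, the ratio $\psi_o(s)/s$ tends, as $s\to+\infty$, to a limit $m\in(-\infty,+\infty]$, and $m\ge0$ because $\psi_o$ is non-decreasing; then $M_o(u,r)/\log r=(\psi_o(v_o(r))/v_o(r))\cdot(v_o(r)/\log r)\to c_o\,m$, so \eqref{eq-sub-log} forces $m\le0$, hence $m=0$, which for the convex non-decreasing $\psi_o$ means its slope is everywhere $\le m=0$ and $\ge0$, i.e.\ $\psi_o$ is constant and $M_o(u,r)\equiv u(o)$. In particular $u\le u(o)$ on $M$, so $u$ is bounded above. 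Running the same two steps at an arbitrary point $p\in M$ now shows that $\psi_p$ is convex on $\R$ and bounded above (by $\sup_M u<+\infty$), hence constant, so $M_p(u,r)\equiv u(p)$; taking $r>r(p,x)$ gives $u(x)\le u(p)$ for every $x\in M$. As $p$ and $x$ are arbitrary, $u$ is constant. The final assertion follows because a plurisubharmonic $u$ satisfies $L[u]\ge0$: $\iddbar u\ge0$ is equivalent to $\Hess(u)(X,X)+\Hess(u)(JX,JX)\ge0$ for all real $X$ (the would-be torsion term vanishing by \eqref{eq-vanishing-1-1}), and one specialises to $X=\nabla u$.

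The step I expect to be the main obstacle is the second paragraph: extracting from the single bound $\int_0^{+\infty}tq(t)\,dt<+\infty$ the facts that the associated function $v$ is unbounded and grows at least like a fixed positive multiple of $\log r$. This is precisely what excludes the degenerate scenario in which $\psi_o$ has bounded range (for instance is exponential in $v$), which would be compatible with \eqref{eq-sub-log} but not with $u$ constant. The remaining ingredients are the direct application of Theorem~\ref{thm-three-circle-AH} and two elementary convexity facts — a convex non-decreasing function whose $\liminf$ at $+\infty$ is sub-linear, and a convex function on $\R$ that is bounded above, are each constant.
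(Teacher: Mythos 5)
Your proof is correct and follows essentially the same route as the paper: convexity of $M_p(u,\cdot)$ in $v_p(r)$ from Theorem~\ref{thm-three-circle-AH}, the logarithmic asymptotics of $v_p$ at infinity (the paper's Lemma~\ref{lem-Riccati}, which you rederive by a direct Riccati estimate on $b=h-1/t$), and the sub-logarithmic growth hypothesis forcing the convex function to be constant. The only cosmetic difference is in handling a general point $p$: the paper transfers the growth bound from $o$ to $p$ via the maximum principle inequality $M_p(u,r)\le M_o(u,r+r(o,p))$ and repeats the liminf argument there, whereas you first conclude $u\le u(o)$ from the base point and then invoke that a convex function on $\R$ bounded above is constant --- both are valid.
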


Similarly as in the K\"ahler case, we denote the ring of holomorphic functions on $M$ as  $\mathcal O(M)$. A function $f$ on $M$ is said to be of polynomial growth if there are constants $C,d>0$ such that
\begin{equation}\label{eq-poly}
|f(x)|\leq C(1+r(x)^d),\ \forall x\in M.
\end{equation}
Denote the ring of holomorphic functions on $M$ of polynomial growth as $\mathcal P(M)$. Denote $\mathcal O_d(M)$ the space of holomorphic functions such that \eqref{eq-poly} holds for some $C>0$. For each nonzero $f\in \mathcal P(M)$, define
\begin{equation}
\deg f=\inf\{d>0\ |\ f\in \mathcal O_d(M)\}
\end{equation}
which is called the degree of $f$. Then, similar to \cite{Liu}, we have the following consequence of Theorem \ref{thm-three-circle-AH}.
\begin{thm}\label{thm-mono-2}
Let $(M^{2n},J,g)$ be a complete noncompact almost Hermitian manifold and $o\in M$. Suppose there is a nonegative continuous function $q(t)$ on $[0,+\infty)$ such that  \begin{enumerate}
\item $H_o(r)\geq -q(r)$ for any $r\geq0$, and
\item $\D\int_0^{+\infty}rq(r)dr<+\infty$.
\end{enumerate}
Then,
\begin{enumerate}
\item $\displaystyle f\in \mathcal P(M)$ if and only if $\displaystyle\liminf_{r\to+\infty}\frac{\log M_o(|f|,r)}{\log r}<+\infty$;
\item for any $f\in \mathcal P(M)$, $$\deg f=\liminf_{r\to+\infty}\frac{\log M_o(|f|,r)}{\log r}=\limsup_{r\to+\infty}\frac{\log M_o(|f|,r)}{\log r};$$
\item for any $f\in \mathcal P(M),$ $$\log M_o(|f|,r)-I(q)\deg f\cdot v(r)$$ is decreasing on $r$. In particular, when $M$ has nonnegative holomorphic sectional curvature, $\log M_o(|f|,r)-\deg f\cdot \log r$ is decreasing;
\item for any nonzero holomorphic function $f$ on $B_o(R)$, $$\log M_o(|f|,r)-\ord_o(f)\cdot v(r)$$ is increasing for $r\in (0,R)$. In particular, when $M$ has nonnegative holomorphic sectional curvature $\log M_o(|f|,r)-\ord_o(f)\cdot \log r$ is increasing;
\item for any nonzero $f\in \mathcal P(M)$, $$\ord_o(f)\leq I(q)\deg f.$$ In particular, when $M$ has nonnegative holomorphic sectional curvature, $$\ord_o(f)\leq \deg f.$$
\end{enumerate}
Here $\ord_o(f)$ means the vanishing order of $f$ at $o$, and $v$ is the function associated to $q$.
\end{thm}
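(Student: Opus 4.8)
The plan is to derive all five statements from the convexity in Theorem~\ref{thm-three-circle-AH}(2) applied with a single holomorphic function, together with two auxiliary facts: the precise asymptotics of the comparison function $v$ associated to $q$, and the local behaviour of $|f|$ near $o$. First I would record the behaviour of $v$. Writing $h=\phi'/\phi$, where $\phi$ solves $\phi''=q\phi$ on $[0,\infty)$ with $\phi(0)=0$ and $\phi'(0)=1$, one has that $\phi$ is positive and convex on $(0,\infty)$, that $h$ and $v$ satisfy the hypotheses of Theorem~\ref{thm-three-circle-AH} on every $B_o(R)$ (indeed $\lim_{t\to0^+}th(t)=1$, and $h'+h^2+H(\gamma'(t))=q(t)+H(\gamma'(t))\ge q(t)-q(t)=0$ by $H_o(r)\ge-q(r)$, while $v'=1/\phi>0$ and $v''+hv'=0$), and that from $(\log\phi')'=q\phi/\phi'\le tq(t)$ together with $\phi(t)\ge t$ one gets $t\le\phi(t)\le I(q)\,t$. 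Hence $v$ is a strictly increasing diffeomorphism of $(0,\infty)$ onto $\R$ with $v(r)/\log r\to1$ as $r\to0^+$ and $v(r)/\log r\to\lambda$ as $r\to+\infty$ for some $\lambda\in[1/I(q),1]$ (in fact $\lambda=1/\lim_{t\to\infty}\phi'(t)$, the limit being finite exactly because $\int_0^\infty tq(t)\,dt<\infty$).

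Now fix a nonzero $f$ and set $P(r)=\log M_o(|f|,r)$. Theorem~\ref{thm-three-circle-AH}(2) (with $k=1$, after dividing by $2$) says that $Q:=P\circ v^{-1}$ is convex on the interval of $s=v(r)$ corresponding to the domain of $f$; note that this interval always contains a neighbourhood of $-\infty$ since $v(0^+)=-\infty$. For a convex $Q$ the derivative is monotone, so $m:=\lim_{s\to+\infty}Q'(s)=\sup Q'\in(-\infty,+\infty]$ and $\mu:=\lim_{s\to-\infty}Q'(s)=\inf Q'\in[-\infty,+\infty)$ exist, with $Q(s)/s$ tending to $m$, resp.\ $\mu$, at the corresponding end. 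Unwinding the substitution via $P(r)/\log r=\big(Q(v(r))/v(r)\big)\big(v(r)/\log r\big)$ and $v(r)/\log r\to\lambda$ (resp.\ $\to1$) gives $\lim_{r\to\infty}P(r)/\log r=m\lambda$ (resp.\ $\lim_{r\to0^+}P(r)/\log r=\mu$).

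For (1)--(3) take $f\in\mathcal O(M)$. Since $r\mapsto M_o(|f|,r)$ is non-decreasing, the definitions of $\mathcal O_d(M)$ and of $\deg$ give at once $f\in\mathcal P(M)\iff\limsup_{r\to\infty}P(r)/\log r<\infty$, and, in that case, $\deg f=\limsup_{r\to\infty}P(r)/\log r$. Because this $\limsup$ is actually the limit $m\lambda$, it equals the corresponding $\liminf$, and finiteness of the $\liminf$ forces $m<\infty$; this proves (1), and then $\deg f=m\lambda=\liminf=\limsup$, which is (2). For (3), $m<\infty$ gives $Q'\le m$, so $Q(s)-ms$ and hence $P(r)-m\,v(r)$ is non-increasing in $r$; since $m=\deg f/\lambda\le I(q)\deg f$ (using $1/\lambda\le I(q)$ and $\deg f\ge0$) and $v$ is increasing, the term $(I(q)\deg f-m)v(r)$ is non-decreasing, so $P(r)-I(q)\deg f\cdot v(r)=[P(r)-m\,v(r)]-(I(q)\deg f-m)v(r)$ is non-increasing. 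When the holomorphic sectional curvature is $\ge0$ one may take $q\equiv0$, whence $h=1/t$, $v=\log r$, $I(q)=1$, giving the stated special case.

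Statements (4) and (5) need the local input that for a nonzero holomorphic $f$ near $o$ one has $M_o(|f|,r)\asymp r^{\ord_o(f)}$ as $r\to0^+$, i.e.\ $P(r)/\log r\to\ord_o(f)$; this comes from the local structure of solutions of $\dbar f=0$ (the leading homogeneous part of $f$ at $o$ in coordinates adapted to $J$). Granting it, $\mu=\ord_o(f)$, so $Q'\ge\ord_o(f)$ everywhere and $P(r)-\ord_o(f)v(r)$ is increasing on $(0,R)$, which is (4); the curvature $\ge0$ case is again $v=\log r$. For (5), applying (4) on all of $M$ makes $P(r)-\ord_o(f)v(r)$ non-decreasing on $(0,\infty)$ while (3) makes $P(r)-I(q)\deg f\cdot v(r)$ non-increasing there, so their difference $(I(q)\deg f-\ord_o(f))v(r)$ is non-decreasing on $(0,\infty)$; as $v$ is strictly increasing and unbounded above, its coefficient must be $\ge0$, i.e.\ $\ord_o(f)\le I(q)\deg f$, with $q\equiv0$ giving $\ord_o(f)\le\deg f$. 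The logical skeleton is short, and the substance lies in the two auxiliary inputs: the ODE asymptotics of $v$ (finiteness of $\lambda$, where $\int_0^\infty tq\,dt<\infty$ enters, and the bound $\lambda\ge1/I(q)$), which is elementary, and the local estimate $M_o(|f|,r)\asymp r^{\ord_o(f)}$ on an almost complex manifold, which is the genuinely non-formal ingredient and which I expect to be the main obstacle; one must also keep the directions of the various monotonicities straight (and note $v<0$ near $r=0$) when passing between $r$ and $s=v(r)$.
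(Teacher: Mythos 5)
Your proposal is correct and follows essentially the same route as the paper: the three circle convexity of $\log M_o(|f|,r)$ in $v(r)$ from Theorem \ref{thm-three-circle-AH}, the ODE asymptotics $\lim_{r\to\infty}v(r)/\log r\in[1/I(q),1]$ (the paper's Lemmas \ref{lem-second-ODE} and \ref{lem-Riccati}), and the local asymptotic $\log M_o(|f|,r)/\log r\to\ord_o(f)$ at $o$; your phrasing via monotone one-sided derivatives of the convex function $Q=P\circ v^{-1}$ is just an equivalent packaging of the paper's three-point inequalities with $r_3\to\infty$ or $r_1\to0^+$. The one ingredient you flag as the "main obstacle" is treated by the paper as immediate (Taylor expansion gives $M_o(|f|,r)\asymp r^{\ord_o(f)}$ for any smooth function of finite vanishing order; holomorphy enters only through Aronszajn's unique continuation to guarantee that order is finite), so there is no real gap there.
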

We would like to mention that the sharp vanishing order estimate
$$\ord_o(f)\leq \deg f$$
on complete noncompact K\"ahler manifolds was first obtained by Ni \cite{Ni} under the assumptions of nonnegative holomorphic bisectional curvature and maximal volume growth. Later, the assumption of maximal volume growth was removed by Chen-Fu-Yin-Zhu \cite{CFYZ}. Liu \cite{Liu} relaxed the curvature assumption to nonnegative holomorphic sectional curvature. For vanishing order estimate which is not sharp, it was first obtained by Mok \cite{Mok} under some more restrictive geometric assumptions.

As a consequence of Theorem \ref{thm-mono-2}, we have the following Liouville theorem of Cheng-type for holomorphic functions.
\begin{cor}\label{cor-Liouville-holo-2}
Let $(M^{2n},J,g)$ be a complete noncompact almost Hermitian manifold and $o\in M$. Suppose there is a nonnegative continuous function $q(t)$ on $[0,+\infty)$ such that  \begin{enumerate}
\item $H_o(r)\geq -q(r)$ for any $r\geq0$, and
\item $\D\int_0^{+\infty}rq(r)dr<+\infty$.
\end{enumerate} Let $f$ be a holomorphic function on $M$ such that
\begin{equation}\label{eq-sub-linear}
\lim_{r\to +\infty}r^{-\frac{1}{I(q)}}{M_o(|f|,r)}=0.
\end{equation}
Then, $f$ must be a constant function. In particular, when $M$ has nonnegative holomorphic sectional curvature and $f$ is a holomorphic function on $M$ of sub-linear growth, $f$ must be constant.
\end{cor}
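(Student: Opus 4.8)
The plan is to deduce the corollary directly from the monotonicity assertion in Theorem~\ref{thm-mono-2}(4), arguing by contradiction. Suppose $f$ is non-constant. The first step is a normalization: replace $f$ by $\tilde f=f-f(o)$. Then $\tilde f$ is holomorphic, not identically zero, and vanishes at $o$, so $m:=\ord_o(\tilde f)\geq 1$; and since $M_o(|\tilde f|,r)\leq M_o(|f|,r)+|f(o)|$ and $r^{-1/I(q)}\to 0$, the hypothesis \eqref{eq-sub-linear} still holds with $\tilde f$ in place of $f$. So we may assume from the outset that $\ord_o(f)=m\geq 1$ and $f\not\equiv 0$.

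Second, Theorem~\ref{thm-mono-2}(4) (applied with $R=+\infty$, equivalently for every finite $R$) says that $r\mapsto \log M_o(|f|,r)-m\,v(r)$ is non-decreasing on $(0,+\infty)$, where $v$ is the function associated to $q$. Fixing some $r_0>0$, this gives for $r\geq r_0$
\[
\log M_o(|f|,r)\ \geq\ m\,v(r)+C,\qquad C:=\log M_o(|f|,r_0)-m\,v(r_0),
\]
hence $M_o(|f|,r)\geq e^{C}e^{m v(r)}$. The one genuinely computational ingredient is a matching lower bound for the associated function: $e^{v(r)}\geq c_1\,r^{1/I(q)}$ for large $r$ and some $c_1>0$. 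To see this, write $h=\psi'/\psi$ with $\psi''=q\psi$, $\psi(0)=0$, $\psi'(0)=1$; then $v'=1/\psi$. Convexity of $\psi$ together with $\psi(0)=0$ gives $\psi(t)\leq t\,\psi'(t)$, and from $\psi'(t)=1+\int_0^t q\psi\leq 1+\int_0^t s\,q(s)\,\psi'(s)\,ds$ Gronwall's inequality yields $\psi'(t)\leq\exp\!\big(\int_0^t s\,q(s)\,ds\big)\leq I(q)$. Hence $v'(t)\geq \frac{1}{I(q)\,t}$, so $v(r)\geq v(r_0)+\frac{1}{I(q)}\log(r/r_0)$ (in particular $v(r)\to+\infty$), which is the claim. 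Combining the two bounds and using $m\geq 1$,
\[
M_o(|f|,r)\ \geq\ e^{C}c_1^{\,m}\,r^{m/I(q)}\ \geq\ e^{C}c_1^{\,m}\,r^{1/I(q)}\qquad (r\geq\max(r_0,1)),
\]
so $\liminf_{r\to+\infty}r^{-1/I(q)}M_o(|f|,r)>0$, contradicting \eqref{eq-sub-linear}. Therefore $f$ is constant.

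For the final ``in particular'', take $q\equiv 0$: nonnegative holomorphic sectional curvature gives $H_o(r)\geq 0=-q(r)$ and $\int_0^{+\infty}r\,q(r)\,dr=0<+\infty$, while $I(q)=\exp(0)=1$ and the associated function is $v(r)=\log r$; then \eqref{eq-sub-linear} becomes $\lim_{r\to+\infty}r^{-1}M_o(|f|,r)=0$, i.e. $f$ is of sub-linear growth, and the general statement applies. I expect the only delicate point to be the bookkeeping around the normalizing exponent $1/I(q)$ and the comparison $v(r)\geq \frac{1}{I(q)}\log r+O(1)$ for the associated function; if an asymptotic lemma for $v$ and $I(q)$ is available earlier in the paper, the computational step can simply quote it. Everything else is a formal consequence of Theorem~\ref{thm-mono-2}.
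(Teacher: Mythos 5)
Your proposal is correct and follows essentially the same route as the paper: normalize to $f-f(o)$ so the vanishing order at $o$ is at least $1$, apply the monotonicity in Theorem~\ref{thm-mono-2}(4), and use the lower bound $v(r)\geq \frac{1}{I(q)}\log r+v(1)$ (which is exactly \eqref{eq-v-t>1} of Lemma~\ref{lem-Riccati}, so you could simply cite it instead of re-deriving it) to contradict \eqref{eq-sub-linear}.
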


Note that Cheng's Liouville theorem is for harmonic functions and under the curvature assumption of nonnegative Ricci curvature. Here, the curvature assumption in Corollary \ref{cor-Liouville-holo-2} is rather different. By Corollary \ref{cor-Liouville-holo-2}, we have the following result analogous to fundamental theorem of algebra on almost Hermitian manifolds. It seems that this conclusion was not mentioned before even for the K\"ahler case.
\begin{cor}\label{cor-fund-holo-2}
Let $(M^{2n},J,g)$ be a complete noncompact simply connected almost Hermitian manifold and $o\in M$. Suppose there is a nonnegative continuous function $q(t)$ on $[0,+\infty)$ such that  \begin{enumerate}
\item $H_o(r)\geq -q(r)$ for any $r\geq0$, and
\item $\D\int_0^{+\infty}rq(r)dr<+\infty$.
\end{enumerate}  Then, for any nonconstant holomorphic function $f\in \mathcal P(M)$, $Z(f)\neq\emptyset$ where $Z(f)=\{x\in M\ |\ f(x)=0\}$.
\end{cor}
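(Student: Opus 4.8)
The plan is to argue by contradiction. Suppose $f\in\mathcal P(M)$ is nonconstant with $Z(f)=\emptyset$; I will manufacture from $f$ a nonconstant holomorphic function whose growth lies below the threshold of the Cheng-type Liouville theorem (Corollary~\ref{cor-Liouville-holo-2}), which will be absurd. Simple connectivity enters precisely in order to extract a global holomorphic logarithm of $f$; this hypothesis is genuinely necessary, since the coordinate function on $\C^{\ast}$ (which is flat, hence of nonnegative holomorphic sectional curvature) is nonconstant, nowhere zero, and of polynomial growth.

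First I would construct the logarithm. Since $\dbar f=0$ we have $df=\partial f$, which is of type $(1,0)$; since $f$ is nowhere zero, the $1$-form $\omega:=df/f$ is smooth, and $d\omega=-f^{-2}\,df\wedge df+f^{-1}\,d(df)=0$, so $\omega$ is closed. As $M$ is simply connected, $\omega=dg$ for some smooth complex-valued $g$ on $M$, and then $d(fe^{-g})=e^{-g}(df-f\,dg)=e^{-g}(df-f\omega)=0$, so $f=ce^{g}$ for a nonzero constant $c$; absorbing $c$ into $g$, we get $f=e^{g}$. Because $dg=\omega$ is of type $(1,0)$, $\dbar g=0$, so $g$ is holomorphic, and $g$ is nonconstant since $f$ is.

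Next, for $t>0$ I would set $f_{t}:=e^{tg}$. The chain rule $\dbar(e^{tg})=te^{tg}\,\dbar g=0$ shows each $f_{t}$ is holomorphic; it is clearly nowhere zero, and it is nonconstant because $tg$ is nonconstant while $tg(M)$, being connected, cannot lie in a fibre of $e^{\cdot}$, which is a discrete subset of $\C$. From $f=e^{g}$ we get $|f_{t}|=e^{t\,\mathrm{Re}\,g}=|f|^{t}$, so if $|f(x)|\le C(1+r(x)^{d})$ with $C,d>0$, then $M_{o}(|f_{t}|,r)\le C'r^{dt}$ for $r\ge1$. Since $I(q)=\exp\bigl(\int_{0}^{\infty}sq(s)\,ds\bigr)$ is finite by hypothesis, I may pick $t\in\bigl(0,\tfrac{1}{dI(q)}\bigr)$, whence $dt<1/I(q)$ and $r^{-1/I(q)}M_{o}(|f_{t}|,r)\le C'r^{dt-1/I(q)}\to0$ as $r\to\infty$. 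By Corollary~\ref{cor-Liouville-holo-2}, $f_{t}$ is constant, hence $tg$ and therefore $g=\log f$ is constant, hence $f$ is constant --- contradiction.

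The step I expect to be the crux is the passage to fractional powers $f_{t}$: $f$ itself may well have growth degree exceeding $1/I(q)$, so Corollary~\ref{cor-Liouville-holo-2} does not apply to $f$ directly, and the key point is that nonvanishing together with simple connectivity yields an honest holomorphic $g=\log f$ that can then be scaled down so that $e^{tg}$ sits below the Liouville threshold while remaining nonconstant. The remaining ingredients --- closedness of $df/f$, holomorphicity of $e^{tg}$, and the connectedness argument for nonconstancy --- are the routine almost-Hermitian bookkeeping, carried out with the $\dbar$ chain rule. (One may alternatively phrase the growth step through $\deg f_{t}=t\deg f$, via Theorem~\ref{thm-mono-2}(2), after noting that a nonconstant polynomial-growth holomorphic function has positive degree.)
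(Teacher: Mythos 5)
Your proposal is correct and follows essentially the same route as the paper: the paper obtains the holomorphic logarithm by lifting $f:M\to\C^*$ through the universal covering $\pi(z)=e^z$ (where you instead integrate the closed $(1,0)$-form $df/f$, an equivalent use of simple connectivity), and then takes $\zeta=e^{h/N}$ with an integer $N>2I(q)\deg f$ in place of your $e^{tg}$ with $t<\tfrac{1}{dI(q)}$, before applying Corollary \ref{cor-Liouville-holo-2} in exactly the same way.
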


When the almost Hermitian manifold has nonnegative holomorphic sectional curvature, by taking universal cover, we can drop the assumption of simply connectivity on $M$.
\begin{cor}\label{cor-fund-holo}
Let $(M^{2n},J,g)$ be a complete noncompact almost Hermitian manifold with nonnegative holomorphic sectional curvature. Then, for any nonconstant holomorphic function $f\in \mathcal P(M)$, $Z(f)\neq\emptyset$.
\end{cor}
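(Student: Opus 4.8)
The idea is to reduce to the simply connected case already handled in Corollary \ref{cor-fund-holo-2}. Let $(M^{2n},J,g)$ be complete noncompact with nonnegative holomorphic sectional curvature, and let $\pi:(\tM^{2n},\tilde J,\tilde g)\to (M,J,g)$ be the universal Riemannian cover, where $\tilde J=\pi^*J$ and $\tilde g=\pi^*g$; since $\pi$ is a local isometry intertwining the almost complex structures, $(\tM,\tilde J,\tilde g)$ is again a complete almost Hermitian manifold, and its holomorphic sectional curvature at any point equals that of $M$ at the image point, hence is again nonnegative. Suppose for contradiction that $f\in\mathcal P(M)$ is nonconstant with $Z(f)=\emptyset$. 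Then $\tilde f:=f\circ\pi$ is a holomorphic function on $\tM$ (pulling back $\dbar f=0$ along the holomorphic local isometry $\pi$ gives $\dbar\tilde f=0$), it is nonconstant, and $Z(\tilde f)=\pi^{-1}(Z(f))=\emptyset$.

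The one subtlety is that $\tilde f$ must still be of polynomial growth on $\tM$ with respect to a fixed basepoint $\tilde o\in\pi^{-1}(o)$. This is immediate because $\pi$ is distance-nonincreasing: $r_{\tM}(\tilde o,\tilde x)\geq r_M(o,\pi(\tilde x))$ for all $\tilde x\in\tM$, so from $|f(x)|\leq C(1+r_M(o,x)^d)$ we get $|\tilde f(\tilde x)|=|f(\pi(\tilde x))|\leq C(1+r_M(o,\pi(\tilde x))^d)\leq C(1+r_{\tM}(\tilde o,\tilde x)^d)$, i.e. $\tilde f\in\mathcal P(\tM)$. Here one uses that $\pi$, being a Riemannian covering, maps paths in $\tM$ to paths in $M$ of the same length, so it cannot increase distances.

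Now apply Corollary \ref{cor-fund-holo-2} to $\tM$: it is complete, noncompact (it covers a noncompact manifold, and a compact cover of a noncompact manifold is impossible since the covering map would be both open and closed onto a connected noncompact space), simply connected, and with nonnegative holomorphic sectional curvature, so the hypotheses there hold with $q\equiv 0$ (for which $\int_0^{+\infty}rq(r)\,dr=0<+\infty$ and $I(q)=1$). Therefore every nonconstant $\tilde f\in\mathcal P(\tM)$ has $Z(\tilde f)\neq\emptyset$, contradicting $Z(\tilde f)=\emptyset$. Hence $Z(f)\neq\emptyset$, proving the corollary.

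The only point requiring any care — and the place where the argument could slip — is verifying that all the relevant structure descends correctly to the universal cover: that $\tilde g$-geodesics project to $g$-geodesics so the curvature comparison and the definition of $H_{\tilde o}$ behave well, that $\pi$ is $1$-Lipschitz in order to preserve polynomial growth, and that noncompactness is inherited by the cover. None of these is deep, but they are the substance of the reduction; once they are in place, the conclusion is a direct citation of Corollary \ref{cor-fund-holo-2}.
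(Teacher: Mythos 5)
Your argument is correct and is exactly the route the paper intends: the paper gives no separate proof of Corollary \ref{cor-fund-holo} beyond the remark that one passes to the universal cover and applies Corollary \ref{cor-fund-holo-2}, and you have supplied precisely that reduction, including the checks (holomorphy of the pullback, $1$-Lipschitz covering map preserving polynomial growth, inherited completeness, noncompactness, and nonnegative holomorphic sectional curvature with $q\equiv 0$, $I(q)=1$) that make it rigorous.
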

Moreover,  as a consequence of the vanishing order estimate in  Theorem \ref{thm-mono-2}, we have the following estimate for the dimension of $\mathcal O_d(M)$ which is sharp when the holomorphic sectional curvature is nonnegative. This sharp dimension estimate and its rigidity on complete noncompact K\"ahler manifolds was first obtained by Ni \cite{Ni} under the assumptions of nonnegative holomorphic bisectional curvature and maximal volume growth. Later, Chen-Fu-Yin-Zhu \cite{CFYZ} removed the assumption of maximal volume growth. The curvature assumption was relaxed to nonnegative holomorphic sectional curvature by Liu \cite{Liu}.
\begin{thm}\label{thm-dim}
Let $(M^{2n},J,g)$ be a complete noncompact almost Hermitian manifold and $o\in M$. Suppose there is a nonnegative continuous function $q(t)$ on $[0,+\infty)$ such that  \begin{enumerate}
\item $H_o(r)\geq -q(r)$ for any $r\geq0$, and
\item $\D\int_0^{+\infty}rq(r)dr<+\infty$.
\end{enumerate}
 Then, for any positive number $d$,
\begin{equation}\label{eq-dim-g}
\dim \mathcal O_d(M)\leq \mathcal O_{I(q)d}(\mathbb C^n).
\end{equation}
In particular, when $M$ has nonnegative holomorphic sectional curvature,
\begin{equation}\label{eq-dim-sharp}
\dim \mathcal O_d(M)\leq \mathcal O_{d}(\mathbb C^n).
\end{equation}
Moreover, the equality of \eqref{eq-dim-sharp} holds for some positive integer $d$ if and only if $M$ is biholomorphically isometric to $\C^n$.
\end{thm}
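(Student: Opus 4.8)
\textbf{Proof proposal for Theorem \ref{thm-dim}.}

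The plan is to deduce the dimension bound from the vanishing order estimate $\ord_o(f)\le I(q)\deg f$ in Theorem \ref{thm-mono-2}(5), using the standard linear-algebra/Poincar\'e--Siegel argument. Fix $d>0$ and suppose $\dim\mathcal O_d(M)>\dim\mathcal O_{I(q)d}(\mathbb C^n)$; to derive a contradiction I would first pass to finite-dimensional pieces. For each positive integer $N$ let $V_N\subset\mathcal O_d(M)$ be a finite-dimensional subspace; the jet map $j^N_o\colon V_N\to J^N_o$ sending $f$ to its $N$-jet at $o$ (computed in a local holomorphic coordinate chart, which exists since $J$ is assumed integrable wherever we talk about $\mathcal O_d$ — and in the almost complex case one interprets the jet via the formal Taylor expansion in Chern-normal-type coordinates, or more simply restricts to the Hermitian case as in Liu) has kernel consisting of functions vanishing to order $>N$ at $o$. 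By Theorem \ref{thm-mono-2}(5), any nonzero $f\in\mathcal O_d(M)$ has $\ord_o(f)\le I(q)d$, so as soon as $N\ge I(q)d$ the map $j^N_o$ is injective on all of $\mathcal O_d(M)$. Hence $\dim\mathcal O_d(M)\le\dim J^N_o=\dim\{$polynomials on $\mathbb C^n$ of degree $\le N\}$ for every integer $N\ge I(q)d$; taking $N=\lceil I(q)d\rceil$ already gives a finite bound, but to get the sharp statement one argues more carefully as below.

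For the sharp inequality \eqref{eq-dim-g} I would follow Ni's/Liu's counting argument directly rather than through a single crude jet bound. Let $f_1,\dots,f_m$ be linearly independent elements of $\mathcal O_d(M)$. Consider the $m$-dimensional space they span and, for generic choices, build for each $k\le m$ a function in the span vanishing to order $\ge k-1$ at $o$ by imposing $k-1$ linear jet conditions; a standard dimension count shows we may arrange a basis $g_1,\dots,g_m$ with strictly increasing vanishing orders $\ord_o(g_1)<\ord_o(g_2)<\cdots<\ord_o(g_m)$, except that over $\mathbb C^n$ the correct bookkeeping is by the Hilbert function: if $a_k=\dim\{f\in\text{span}:\ord_o(f)\ge k\}$ then $\sum_k(a_k-a_{k+1})\le$ (number of monomials of degree $k$), and summing against the constraint $\ord_o\le I(q)d$ from Theorem \ref{thm-mono-2}(5) yields exactly $m\le\dim\mathcal O_{I(q)d}(\mathbb C^n)$ because $\dim\mathcal O_{I(q)d}(\mathbb C^n)$ is itself $\sum_{k\le I(q)d}\binom{n-1+k}{n-1}$, the count of monomials of degree $\le I(q)d$. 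This is the step where I expect the real work: making the jet-comparison precise on an almost Hermitian manifold, where there is no genuine holomorphic coordinate chart, so one must either restrict to the integrable case or use an approximate/formal notion of $N$-jet adapted to the Chern connection and check it behaves well under the vanishing order from Theorem \ref{thm-mono-2}. Specialising $q\equiv 0$ (so $I(q)=1$) gives \eqref{eq-dim-sharp} immediately.

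For the rigidity statement, suppose equality holds in \eqref{eq-dim-sharp} for some positive integer $d$. Then the jet map $\mathcal O_d(M)\to\{$polynomials of degree $\le d$ on $\mathbb C^n\}$ is an isomorphism; in particular there exist $f_1,\dots,f_n\in\mathcal O_d(M)$ whose differentials at $o$ form a basis of $T_o^{*1,0}M$, and whose jets realise the coordinate functions. I would show $F=(f_1,\dots,f_n)\colon M\to\mathbb C^n$ is a biholomorphism: surjectivity of the differential everywhere follows because if $dF$ degenerated somewhere the image would be pluripolar, contradicting that $F$ already hits a full-dimensional set of $d$-jets at $o$ together with the three circle / open mapping behaviour of holomorphic maps; properness follows from the growth estimate $\log M_o(|f_i|,r)-d\log r$ being decreasing (Theorem \ref{thm-mono-2}(3)) together with a matching lower bound forced by saturation of the dimension count, giving $|F(x)|\sim r(x)^{?}$ and hence $F$ proper; a proper local biholomorphism onto $\mathbb C^n$ is a covering, and $\mathbb C^n$ is simply connected, so $F$ is a biholomorphism. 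Finally, to upgrade "biholomorphic" to "biholomorphically isometric", one uses that under nonnegative holomorphic sectional curvature the three circle theorem becomes an equality along $F$, which forces the comparison \eqref{eq-comparison-K} with $K=0$ to be an equality; by the equality case of the Hessian comparison (Theorem \ref{thm-comparison}) the metric is forced to be flat along all geodesics from $o$, and since $M$ is then $\mathbb C^n$ as a complex manifold carrying a complete flat K\"ahler metric in the given class, it is isometric to the Euclidean $\C^n$. The delicate point here is the equality discussion: tracking which inequalities in the chain from Theorem \ref{thm-comparison} through Theorem \ref{thm-three-circle-AH} to the dimension count must be tight, and concluding flatness of $g$ (not merely of the holomorphic sectional curvature) — this likely requires the almost complex structure to be integrable and parallel, i.e. $M$ K\"ahler, which one extracts from $\nabla J$ entering the curvature assumption \eqref{eq-holo-sec-L} with equality.
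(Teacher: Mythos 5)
For the dimension estimate your strategy (inject $\mathcal O_d(M)$ into a space of jets at $o$ and use the vanishing order bound $\ord_o(f)\le I(q)\deg f$ from Theorem \ref{thm-mono-2}(5)) is the same as the paper's, but the point you flag as ``the step where I expect the real work'' is a genuine gap that you do not fill, and it is exactly what the paper's Lemma \ref{lem-vanishing-order} supplies. On an almost Hermitian manifold there is no holomorphic chart, and a solution of $\dbar f=0$ has nonvanishing mixed partials $\p^{|\a|+|\be|}f/\p z^\a\p\bar z^\be$ in a general complex coordinate, so it is not obvious that killing only the purely holomorphic derivatives $\p^{|\a|}f/\p z^\a(o)$ for $|\a|\le m$ forces $\ord_o(f)\ge m+1$. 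The paper proves this by choosing any local complex coordinate with $\frac{\p}{\p z^i}\big|_o\in T_o^{1,0}M$, observing that $\dbar f=0$ is equivalent to a first-order system $\frac{\p f}{\p\bar z}=A(z)\frac{\p f}{\p z}$, and then propagating the vanishing to all mixed partials by induction on the antiholomorphic order; finiteness of $\ord_o$ itself needs Aronszajn's unique continuation. With that lemma in hand, the map $\Phi(f)=\sum_{|\a|\le[I(q)d]}\frac{1}{\a!}\frac{\p^{|\a|}f}{\p z^\a}(o)z^\a$ into $\mathcal O_{[I(q)d]}(\C^n)$ is injective, and since $\dim\mathcal O_{[I(q)d]}(\C^n)=\dim\mathcal O_{I(q)d}(\C^n)$ the ``crude'' jet bound already \emph{is} the sharp bound \eqref{eq-dim-g}; your second paragraph with the Hilbert-function bookkeeping is unnecessary (and, as written, still presupposes the same unproved jet lemma).

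The rigidity argument is where your route diverges from the paper's and where the gaps are serious. You try to produce a proper local biholomorphism $F=(f_1,\dots,f_n):M\to\C^n$ directly, but neither the nondegeneracy of $dF$ away from $o$ (the pluripolarity argument is not substantiated) nor the properness (you admit you do not know the exponent in $|F(x)|\sim r(x)^{?}$, and no lower bound on $|F|$ is established) is proved, and even granting a biholomorphism you have no argument that the metric is flat. The paper's actual chain is different and you should note it: (i) equality in \eqref{eq-dim-sharp} makes $\Phi$ an isomorphism at \emph{every} point, which produces local holomorphic coordinates everywhere and hence integrability of $J$, so $\tau_{ij}^{\bk}=0$; (ii) for each unit $\xi\in T_o^{1,0}M$ one takes $f\in\mathcal O_d(M)$ with $f=(z^1)^d+O(r^{d+1})$, for which parts (3) and (4) of Theorem \ref{thm-mono-2} force $\log M_o(|f|,r)=d\log r+c$ exactly; a maximum-point argument at radius $\e$ then forces equality in \eqref{eq-f}, and letting $\e\to0^+$ together with the asymptotics of $\Hess(r)$ yields $\tau_{i1}^1(o)=0$, whence by polarization $\tau\equiv0$ and $(M,J,g)$ is K\"ahler; (iii) the conclusion then follows from the rigidity part of Liu's theorem in the K\"ahler case, rather than from a self-contained flatness argument. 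Your closing guess that integrability and $\nabla J=0$ should be ``extracted from the curvature assumption with equality'' points at the right phenomenon but is not how the argument runs: integrability comes from the surjectivity of the jet map, and the vanishing of the torsion comes from the equality case of the Hessian comparison along the extremal function $f$, not from \eqref{eq-holo-sec-L}.
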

We would like to emphasize that the results we obtained for complete noncompact almost Hermitian manifolds with slightly negative holomorphic sectional curvature can be viewed as an extension of Theorem 11 in \cite{Liu}. Some of them new even for the K\"ahler case. For example, most of the results in this paper also work for complete noncompact K\"ahler manifold satisfying the curvature assumption: $$H_o(r)\geq-\frac{C}{(1+r)^2\log^{1+\e}(2+r)}$$
for some constants $C,\e>0$ which is weaker than the curvature assumption:
$$H_o(r)\geq -\frac{C}{(1+r)^{2+\e}}$$
in \cite[Theorem 11]{Liu}.

The second part of this paper is to consider the converse of the three circle theorem for Hermitian manifolds. We don't consider this for almost Hermitian manifolds because when the almost complex structure is not integrable, the almost Hermitian manifold may support no local holomorphic function. By following the local arguments in \cite{Liu}, we find that the situation for Hermitian manifolds is quite different with the K\"ahler case. We don't get necessary and sufficient conditions.
\begin{defn}
Let $(M^{2n},J,g)$ be an almost Hermitian manifold. If for any $o\in M$, $R>0$, and any nonzero holomorphic function $f$ on $B_o(R)$, $\log M_o(|f|,r)$ is a convex function with respect to $\log r$ for $r\in (0,R)$, we say that $(M,J,g)$ satisfies the three circle theorem.
\end{defn}
By following the idea in \cite{Liu}, we have the following converse of the three circle theorem for Hermitian manifolds.
\begin{thm}\label{thm-con-1-holo}
Let $(M^{2n},J,g)$ be a Hermitian manifold satisfying the three circle theorem. Then
$$R^L(X,JX,JX,X)+3\|(\nabla_XJ)X\|^2\geq 0$$
for any real tangent vector $X$, or equivalently
$$R_{1\bar 11\bar 1}+\sum_{i=2}^n|\tau_{i1}^1|^2\geq 0$$
for any unitary $(1,0)$-frame $e_1,e_2,\cdots,e_n$.
\end{thm}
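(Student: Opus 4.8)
### Plan of Proof

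The plan is to extract the pointwise curvature obstruction from the three circle hypothesis by testing it against carefully chosen local holomorphic functions and expanding $\log M_o(|f|,r)$ as a function of $\log r$ to second order in $r$ near $o$. Fix $o \in M$ and a unitary $(1,0)$-frame $e_1, \dots, e_n$ at $o$. Since $J$ is integrable, we may choose local holomorphic coordinates $(z^1, \dots, z^n)$ centered at $o$ adapted to this frame, so that in particular $\partial/\partial z^i = e_i$ at $o$ and the metric satisfies $g_{i\bar j} = \delta_{ij} + O(|z|)$ — and, with a standard normalization of the coordinates (subtracting the quadratic holomorphic part of the metric potential), one can further arrange $g_{i\bar j} = \delta_{ij} + (\text{pure } z \cdot z \text{ and } \bar z \cdot \bar z \text{ terms}) + O(|z|^3)$, which is where the torsion and curvature of the Chern connection enter at order $|z|^2$. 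The key quantity to compute is the discrepancy between the Riemannian distance $r(o,\cdot)$ and the "holomorphic modulus" $|z^1|$ along the direction $e_1$, because the three circle theorem is a statement about $M_o(|f|,r)$ in terms of the \emph{Riemannian} ball, while a test function like $f = z^1$ has modulus governed by the coordinate geometry.

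Concretely, the main step is: take the test function $f_t(z) = z^1 + t (z^1)^2$ (or more generally $z^1 + t Q(z)$ for a suitable quadratic $Q$) on a small ball $B_o(R)$, and study $\varphi(r) := \log M_o(|f_t|, r)$. Convexity of $\varphi$ in $\log r$ forces $\varphi''(\log r) \ge 0$ in the barrier/viscosity sense; since $f_t(o) = 0$ with $\mathrm{ord}_o(f_t) = 1$, we have $\varphi(r) = \log r + (\text{correction})$, and the correction term, to leading order in $r$, is controlled by (i) the second-order behavior of the metric, hence by $R_{1\bar 1 1 \bar 1}$ and the torsion components $\tau^k_{i1}$, and (ii) the freedom in the parameter $t$ and the choice of $Q$, over which one optimizes. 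The point of including the quadratic perturbation is exactly to cancel the contributions of the "bad" torsion terms: one expects the combination $\tau^1_{i1}$ (the part that can be absorbed into a holomorphic change of the leading coefficient) to appear with a sign that, after optimizing over $t$, produces the $+\sum_{i=2}^n |\tau^1_{i1}|^2$ correction, while the remaining torsion $\tau^{\bar 1}_{i1}$ is not removable by such a perturbation and does not enter. Reconciling the two stated forms of the conclusion is then the identity $R^L(X,JX,JX,X) + 3\|(\nabla_X J)X\|^2 = 2(R_{1\bar 1 1 \bar 1} + \sum_{i=2}^n |\tau^1_{i1}|^2)$ up to normalization, which follows from the curvature identities of \cite{Yu1} in the same way as the equivalence of \eqref{eq-holo-sec-L} and \eqref{eq-holo-sec-C} cited before Definition \ref{def-holo-sec} (see Lemma \ref{lem-curv-holo-sec}); this reduction is essentially bookkeeping once the Chern-connection version is established.

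The main obstacle, I expect, is the precise asymptotic expansion of $M_o(|f_t|, r)$: one must locate, to the needed order in $r$, the point $x_r \in \partial B_o(r)$ where $|f_t|$ achieves its maximum, and this requires knowing the distance function $r(o,\cdot)$ in the holomorphic coordinates to order $|z|^3$ — equivalently, knowing the geodesics from $o$ to this order, which is where the Levi-Civita second fundamental form / the $L$-operator comparison (Theorem \ref{thm-comparison}) and the gap between $L$-behavior and genuine subharmonicity on Hermitian (non-Kähler) manifolds make the computation more delicate than in \cite{Liu}. In the Kähler case the metric can be taken in the form $g_{i\bar j} = \delta_{ij} + O(|z|^2)$ with the $|z|^2$ term \emph{Hermitian} (given by the curvature alone), so $|z^1|$ and $r$ agree to high enough order and the maximizing point sits essentially on the $z^1$-axis; in the Hermitian case the non-removable $\bar z z$-cross terms in the metric push the maximizing point off-axis in a way that contributes extra positive quantities one cannot kill, and it is precisely this that yields the weaker inequality $R_{1\bar 1 1 \bar 1} + \sum |\tau^1_{i1}|^2 \ge 0$ rather than $R_{1\bar 1 1 \bar 1} - \sum |\tau^1_{i1} + \tau^{\bar 1}_{i1}|^2 \ge 0$. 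I would organize the argument so that the expansion is done once, cleanly, for a general test function $z^1 + t Q(z)$, reducing the final conclusion to a finite-dimensional optimization over $(t, Q)$ whose optimum is read off by completing the square.
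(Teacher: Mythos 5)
Your plan is essentially the route the paper takes: the paper also works in normal holomorphic coordinates adapted to the unitary frame, tests the three circle hypothesis against the holomorphic function $f(z)=z^1\bigl(1-\tfrac12\tau_{1i}^1(o)z^i\bigr)$ (a member of your family $z^1+tQ(z)$ with the ``completed square'' choice of $Q$ already made), expands the geodesics from $o$ to third order to control $M_o(|f|,r)$, and derives the contradiction from the monotonicity of $M_o(|f|,r)/r$ coming from convexity plus $\mathrm{ord}_o(f)=1$. The only substantive difference is that in the paper the maximizing direction stays on the $z^1$-axis to leading order (the auxiliary function of $|X^1|^2$ is increasing), rather than being pushed off-axis as you suggest, but this does not affect the structure of the argument.
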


The rest of the paper is organized as follows. In Section 2, we introduce some preliminaries on almost Hermitian geometry. In Section 3, we give a general maximum principle and a general three circle theorem which set up the framework for latter applications. In Section 4, we obtain the main results for almost Hermitian manifolds by applying the framework set up in Section 3. Finally, in Section 5, we consider converse of three circle theorem on Hermitian manifolds.

{\bf Acknowledgment.} The authors would like to thank Professor Jeffrey D. Streets for informing them the important works \cite{ST,Us1,Us2} on Hermitian curvature flow and X.-K. Yang's work \cite{Yang} on Chern-Ricci flow.
\section{Preliminaries on almost Hermitian geometry}
Let $(M^{2n},J,g)$ be an almost Hermitian manifold of real dimension $2n$.
The difference of the Levi-Civita connection $\nabla$ and the Chern connection $D$ is given by the following identity:
\begin{equation}\label{eq-diff-connections}
\vv<\nabla_YX,Z>=\vv<D_Y X,Z>+\frac{1}{2}[\vv<\tau(X,Y),Z>+\vv<\tau(Y,Z),X>-\vv<\tau(Z,X),Y>]
\end{equation}
for any tangent vector fields $X,Y,Z$ (For a proof of this identity, see \cite{FTY}). Recall that the Nijenhuis tensor for an almost complex manifold is a vector-valued two-form defined as
\begin{equation}\label{eq-Nijenhuis}
N_J(X,Y)=[JX,JY]-J[JX,Y]-J[X,JY]-[X,Y]
\end{equation}
for any tangent vectors $X$ and $Y$. A direct computation gives us that $N_J=0$ if and only if $\tau(\xi,\eta)^{0,1}=0$ for any $(1,0)$-vectors $\xi$ and $\eta$ (See \cite{Yu1} for example). For any smooth function $f$ on $M$, we denote
\begin{equation}
\Hess_D(f)=Ddf
\end{equation}
the Hessian of $f$ with respect to the connection $D$. Note that $\Hess_D(f)$ may not be a symmetric tensor because $D$ may have non-vanishing torsion. In fact, 
\begin{equation}
\Hess_D(f)(X,Y)-\Hess_D(f)(Y,X)=\tau(X,Y)(f).
\end{equation}
for any tangent vectors $X,Y$. Then, by \eqref{eq-vanishing-1-1} and \eqref{eq-vanishing-1-1-J},
\begin{equation}\label{eq-Hess-D-commute}
\Hess_D(f)(\xi,\bar\eta)=\Hess_D(f)(\bar\eta,\xi)
\end{equation}
for any $(1,0)$-vectors $\xi$ and $\eta$, and
\begin{equation}
\Hess_D(f)(X,JX)=\Hess_D(f)(JX,X)
\end{equation}
for any tangent vector $X$. So, for any $\xi=X-\ii JX$ with  $X$ a real tangent vector,
\begin{equation}\label{eq-hess-f-J}
\Hess_D(f)(\xi,\bar\xi)=\Hess_D(f)(X,X)+\Hess_D(f)(JX,JX).
\end{equation}
 Moreover, by direct computation, we have the following identities between $\ddbar f$ and $\Hess_D(f)$.
\begin{lem}\label{lem-ddbar-f}
Let $(M,J,g)$ be an almost Hermitian manifold. Then,
\begin{equation}
\ddbar f(\xi,\bar\eta)=\Hess_D(f)(\xi,\bar \eta)=-\dbar\p f(\xi,\bar\eta)
\end{equation}
for any smooth function $f$ and $(1,0)$-vectors $\xi$ and $\eta$.
\end{lem}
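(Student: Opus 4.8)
The plan is to verify all three expressions by evaluating them directly on a pair $(\xi,\bar\eta)$ with $\xi,\eta$ of type $(1,0)$, using Cartan's formula for the exterior derivative of a $1$-form,
$$d\alpha(X,Y)=X(\alpha(Y))-Y(\alpha(X))-\alpha([X,Y]),$$
together with the defining properties of the Chern connection $D$. The only structural fact I need beyond this is that a $2$-form vanishes on the mixed pair $(\xi,\bar\eta)$ unless it has a nonzero $(1,1)$-component: the $(2,0)$-part needs two $(1,0)$ inputs and the $(0,2)$-part needs two $(0,1)$ inputs. In particular, since $d(\dbar f)$ and $d(\p f)$ are $2$-forms, this gives $\ddbar f(\xi,\bar\eta)=(d\dbar f)(\xi,\bar\eta)$ and $\dbar\p f(\xi,\bar\eta)=(d\p f)(\xi,\bar\eta)$, so the non-integrability of $J$ (which only contributes extra components in $A^{2,0}\oplus A^{0,2}$ here) plays no role.

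For the second equality I would use that $df=\p f+\dbar f$, so $\dbar f(\xi)=0$, $\dbar f(\bar\eta)=\bar\eta(f)$, $\p f(\bar\eta)=0$, $\p f(\xi)=\xi(f)$; moreover, a $(1,0)$- (resp.\ $(0,1)$-) form applied to a vector picks out its $(1,0)$- (resp.\ $(0,1)$-) part. Cartan's formula then yields
$$\ddbar f(\xi,\bar\eta)=\xi(\bar\eta(f))-[\xi,\bar\eta]^{0,1}(f),\qquad \dbar\p f(\xi,\bar\eta)=-\bar\eta(\xi(f))-[\xi,\bar\eta]^{1,0}(f),$$
where $[\xi,\bar\eta]=[\xi,\bar\eta]^{1,0}+[\xi,\bar\eta]^{0,1}$ is the type decomposition. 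Adding the two and using $[\xi,\bar\eta](f)=[\xi,\bar\eta]^{1,0}(f)+[\xi,\bar\eta]^{0,1}(f)$ gives $\ddbar f(\xi,\bar\eta)+\dbar\p f(\xi,\bar\eta)=\xi(\bar\eta(f))-\bar\eta(\xi(f))-[\xi,\bar\eta](f)=0$.

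For the first equality I would expand $\Hess_D(f)(\xi,\bar\eta)=(D_\xi df)(\bar\eta)=\xi(\bar\eta(f))-(D_\xi\bar\eta)(f)$, so it remains to identify $D_\xi\bar\eta=[\xi,\bar\eta]^{0,1}$. Since $D$ is compatible with $J$, it preserves the $(1,0)/(0,1)$ splitting, so $D_\xi\bar\eta$ is a $(0,1)$-vector and $D_{\bar\eta}\xi$ is a $(1,0)$-vector. The vanishing of the $(1,1)$-part of the torsion, \eqref{eq-vanishing-1-1}, reads $D_\xi\bar\eta-D_{\bar\eta}\xi-[\xi,\bar\eta]=\tau(\xi,\bar\eta)=0$; taking the $(0,1)$-component of this identity isolates $D_\xi\bar\eta=[\xi,\bar\eta]^{0,1}$. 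Comparing with the formula for $\ddbar f(\xi,\bar\eta)$ from the previous paragraph gives $\Hess_D(f)(\xi,\bar\eta)=\ddbar f(\xi,\bar\eta)$, completing the proof.

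There is no real obstacle here: the argument is a short computation. The only points that require care are keeping the convention for $\Hess_D=Ddf$ and for the $(p,q)$-projections consistent, and noting explicitly that evaluation on the mixed pair $(\xi,\bar\eta)$ annihilates all components of a $2$-form except the $(1,1)$-one, which is what lets the almost-complex (non-integrable) case be handled exactly as the integrable one.
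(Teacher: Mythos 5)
Your proposal is correct and follows essentially the same route as the paper: Cartan's formula for $d$ applied to $\dbar f$, the observation that evaluation on $(\xi,\bar\eta)$ sees only the $(1,1)$-component, and the identification $D_\xi\bar\eta=[\xi,\bar\eta]^{0,1}$ via type-preservation of $D$ and the vanishing of the $(1,1)$-torsion. The only cosmetic difference is that you obtain the third equality by showing $\ddbar f(\xi,\bar\eta)+\dbar\p f(\xi,\bar\eta)=0$ in one stroke, where the paper just remarks that it follows "similarly."
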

\begin{proof}
By definitions and \eqref{eq-Hess-D-commute},
\begin{equation}
\begin{split}
\ddbar f(\xi,\bar \eta)=&d\dbar f(\xi,\bar\eta)\\
=&\xi(\dbar f(\bar \eta))-\bar\eta(\dbar f(\xi))-\dbar f([\xi,\bar\eta])\\
=&\xi\bar\eta(f)-[\xi,\bar\eta]^{0,1}(f)\\
=&\xi\bar\eta(f)-D_\xi\bar\eta(f)\\
=&\Hess_D(f)(\xi,\bar\eta)
\end{split}
\end{equation}
The other identity can be shown similarly.
\end{proof}
By Lemma \ref{lem-ddbar-f}, a real-valued function $u$ is pluri-subharmonic if and only if
\begin{equation}
\Hess_D(u)(\xi,\bar\xi)\geq0
\end{equation}
for any $(1,0)$-vector $\xi$. By \eqref{eq-hess-f-J}, this is also equivalent to
\begin{equation}
\Hess_D(u)(X,X)+\Hess_D(u)(JX,JX)\geq 0
\end{equation}
for any real tangent vector $X$. Moreover, the same as in the case of complex manifolds, one can construct pluri-subharmonic functions out of holomorphic functions.
\begin{lem}\label{lem-holo-psh}
Let $(M^{2n}, J)$ be an almost complex manifold with real dimension $2n$ and $\Omega\subset M$ be an open subset.
Then,  $\log(|f_1|^2+|f_2|^2+\cdots+|f_k|^2+\e)$ is pluri-subharmonic on $\Omega$ for any holomorphic functions $f_1,f_2,\cdots, f_k$ on $\Omega$ and $\e>0$.
\end{lem}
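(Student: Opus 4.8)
The plan is to reduce everything to a pointwise positivity statement. Write $F:=|f_1|^2+\cdots+|f_k|^2+\e$ and $u:=\log F$; since $\e>0$, $F$ is a smooth strictly positive real function, so $u$ is well defined on all of $\Omega$, and it suffices to show $\ddbar u(\xi,\bar\xi)\ge 0$ for every $(1,0)$-vector $\xi$ at every point, which is exactly the statement that $\iddbar u$ is a nonnegative $(1,1)$-form. (Only the almost complex structure enters, so the absence of a metric in the hypothesis is no issue; alternatively one may fix any compatible metric and argue through $\Hess_D$ via Lemma~\ref{lem-ddbar-f}.) Fix a point, extend the given $(1,0)$-vector to a local $(1,0)$-vector field $\xi$, and put $\zeta:=[\xi,\bar\xi]^{0,1}$, a $(0,1)$-vector field. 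By the same computation as in the proof of Lemma~\ref{lem-ddbar-f} — applying $d\theta(X,Y)=X\theta(Y)-Y\theta(X)-\theta([X,Y])$ to $\theta=\dbar(\cdot)$ — one has the pointwise identity
\[
\ddbar g(\xi,\bar\xi)=\xi\bar\xi(g)-\zeta(g)
\]
for any smooth (possibly complex-valued) function $g$.

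Next I would extract the consequences of holomorphicity. Since $\dbar f_i=0$, the form $df_i=\p f_i$ is of type $(1,0)$, so $\bar\xi(f_i)=\zeta(f_i)=0$; conjugating, $\xi(\bar f_i)=0$ and $\bar\xi(\bar f_i)=\ol{\xi(f_i)}$; and $\ddbar\bar f_i=0$ (immediate from $d^2=0$ together with $\p\bar f_i=0$, using $\ddbar=\p\dbar$). Plugging these into the identity above, a short Leibniz-rule computation gives
\[
\xi(F)=\sum_i\xi(f_i)\bar f_i,\qquad \ddbar F(\xi,\bar\xi)=\sum_i|\xi(f_i)|^2 ,
\]
the second equality because, expanding $\ddbar(f_i\bar f_i)(\xi,\bar\xi)$, the only cross term that does not vanish is $f_i\,\ddbar\bar f_i(\xi,\bar\xi)=0$. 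Combining this with the chain rule $\dbar u=\dbar F/F$ and the same pointwise identity,
\[
\ddbar u(\xi,\bar\xi)=\frac{\ddbar F(\xi,\bar\xi)}{F}-\frac{|\xi(F)|^2}{F^2}
=\frac{1}{F}\sum_i|\xi(f_i)|^2-\frac{1}{F^2}\Bigl|\sum_i\xi(f_i)\bar f_i\Bigr|^2 .
\]

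The final step is Cauchy--Schwarz in $\C^k$: $\bigl|\sum_i\xi(f_i)\bar f_i\bigr|^2\le\bigl(\sum_i|\xi(f_i)|^2\bigr)\bigl(\sum_i|f_i|^2\bigr)\le F\sum_i|\xi(f_i)|^2$, the last inequality because $\sum_i|f_i|^2=F-\e\le F$. Dividing by $F^2$ yields $\ddbar u(\xi,\bar\xi)\ge 0$, completing the argument. Note that $\e>0$ is used twice: to keep $F$ positive (so that $u=\log F$ is defined even on $\{f_1=\cdots=f_k=0\}$) and to close the Cauchy--Schwarz estimate. I do not expect a genuine obstacle: this is the textbook computation on complex manifolds, and the only thing to be vigilant about is whether non-integrability of $J$ injects extra terms — it does not, since the sole inputs are $d^2=0$ and the type condition $\dbar f_i=0$, neither of which involves the Nijenhuis tensor.
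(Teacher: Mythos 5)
Your proposal is correct and follows essentially the same route as the paper: both reduce to computing $\iddbar \log F$ with $F=|f_1|^2+\cdots+|f_k|^2+\e$ via the identity underlying Lemma~\ref{lem-ddbar-f} and then conclude by the Cauchy--Schwarz inequality. The only difference is presentational --- the paper states the computation as an identity of $(1,1)$-forms, while you contract with $(\xi,\bar\xi)$ and spell out explicitly why holomorphicity and $d^2=0$ prevent any Nijenhuis-type terms from appearing.
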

\begin{proof}
By using Lemma \ref{lem-ddbar-f}, we know that
\begin{equation}
\begin{split}
&\iddbar \log(|f_1|^2+|f_2|^2+\cdots+|f_k|^2+\e)\\
=&\left(\sum_{i=1}^k|f_i|^2+\e\right)^{-2}\left(\left(\sum_{i=1}^k|f_i|^2+\e\right)\sum_{j=1}^k\ii\p f_j\wedge\dbar \ol{f_j}-\ii\left(\sum_{i=1}^k\ol{f_i}\p f_i\right)\wedge\left(\sum_{j=1}^kf_j\dbar\ol{f_j}\right)\right).
\end{split}
\end{equation}
By the Cauchy-Schwarz inequality, we know that
\begin{equation}
\left(\sum_{i=1}^k|f_i|^2\right)\sum_{j=1}^k\ii\p f_j\wedge\dbar \ol{f_j}-\ii\left(\sum_{i=1}^k\ol{f_i}\p f_i\right)\wedge\left(\sum_{j=1}^kf_j\dbar\ol{f_j}\right)\geq 0.
\end{equation}
Then, the conclusion of the lemma follows directly.
\end{proof}

The Hessian operator with respect to the Levi-Civita connection is denoted as
\begin{equation}
\Hess(f)=\nabla df.
\end{equation}
By direct computation using \eqref{eq-diff-connections},
\begin{equation}\label{eq-diff-hess}
\Hess_D(f)(X,Y)=\Hess(f)(X,Y)+\frac{1}{2}[\vv<\tau(X,Y),\nabla
f>+\vv<\tau(Y,\nabla f),X>-\vv<\tau(\nabla f,X),Y>]
\end{equation}
for any tangent vector fields $X$ and $Y$. By \eqref{eq-hess-f-J}, \eqref{eq-diff-hess} and \eqref{eq-vanishing-1-1-J},
\begin{equation}\label{eq-L-f-L}
L[f]=\Hess(f)(\xi_f,\bar\xi_f)=\Hess_D(f)(\xi_f,\bar\xi_f)
\end{equation}
where $\xi_f=\nabla f-\ii J\nabla f$. For each nonzero real tangent vector $X$, we denote the normalization of the $(1,0)$-part of $X$ as $\mathbf{U}(X)$. More precisely,
\begin{equation}
\mathbf{U}(X):=\frac{X-\ii JX}{\|X-\ii JX\|}.
\end{equation}

Next, we come to show the equivalence of the curvature notions formulated in terms of Levi-Civita connection in Gray's work \cite{Gray} and in Definition \ref{def-holo-sec}.
\begin{lem}\label{lem-curv-holo-sec}
Let $(M^{2n},J,g)$ be an almost Hermitian manifold and $\xi=\frac{1}{\sqrt2}(X-\ii JX)$ with $X$ a unit real tangent vector. Then,
\begin{equation}
H(\xi)=R^L(X,JX,JX,X)-\|(\nabla_XJ)(X)\|^2.
\end{equation}
\end{lem}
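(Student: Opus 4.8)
The plan is to establish the identity by expanding both sides in a suitably chosen local frame and comparing terms, using the curvature identities from \cite{Yu1} that relate the Chern curvature $R$ to the Levi-Civita curvature $R^L$ together with torsion terms. First I would fix a point $p\in M$ and a unitary $(1,0)$-frame $(e_1,\dots,e_n)$ with $e_1=\xi$, and write $X=\frac{1}{\sqrt 2}(e_1+\bar e_1)$, $JX=\frac{\sqrt{-1}}{\sqrt 2}(e_1-\bar e_1)$, so that $R^L(X,JX,JX,X)$ becomes, after complexification and using the symmetries of $R^L$, a combination of components $R^L_{1\bar 1 1\bar 1}$ (and possibly $R^L_{1 1\bar 1\bar 1}$-type terms, which for a Riemannian curvature tensor reduce appropriately). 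The right-hand side term $\|(\nabla_X J)(X)\|^2$ should similarly be rewritten in terms of the torsion of the Chern connection via the relation between $\nabla J$ and $\tau$; concretely, $(\nabla_X J)(X)$ measures the failure of $\nabla$ to commute with $J$, and formula \eqref{eq-diff-connections} expresses $\nabla$ in terms of $D$ and $\tau$, which yields $(\nabla_X J)X$ as an explicit expression in the torsion components $\tau_{i1}^1$ and $\tau_{i1}^{\bar 1}$.

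The key step is then to invoke the curvature comparison identity from \cite{Yu1}: the Chern curvature component $R_{1\bar 1 1\bar 1}$ differs from the Levi-Civita component $R^L_{1\bar 1 1\bar 1}$ precisely by torsion-quadratic terms (first and second covariant derivatives of $\tau$ cancelling or combining suitably in this fully-contracted, single-direction case). I expect that after substituting, the combination $R^L(X,JX,JX,X) - \|(\nabla_X J)(X)\|^2$ collapses exactly to $R_{1\bar 1 1\bar 1} - \sum_{i=2}^n |\tau_{i1}^1 + \tau_{i1}^{\bar 1}|^2$, which is the definition of $H(\xi)$ in Definition \ref{def-holo-sec}. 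The normalization constants (the $\frac{1}{\sqrt 2}$ in $\xi$ versus the unit $X$) need to be tracked carefully so that the quadratic scaling matches: $R^L$ evaluated on unit vectors $X,JX$ should correspond to the fully normalized $R_{1\bar 1 1\bar 1}$ with $e_1=\xi$ of unit length, and the factor of $2$ in $\|\xi\|^2 = 2\|X\|^2 \cdot \tfrac12$ bookkeeping must come out consistently.

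The main obstacle I anticipate is the bookkeeping of the torsion terms: the Nijenhuis tensor (equivalently the $(0,2)$-part of $\tau$, recorded by $\tau_{i1}^{\bar 1}$) and the $(2,0)$-part (recorded by $\tau_{i1}^1$) enter $(\nabla_X J)X$ in a specific linear combination, and one must verify that this combination is exactly $\tau_{i1}^1 + \tau_{i1}^{\bar 1}$ (up to a universal constant) after taking the norm, so that the cross terms and the real-part conventions align with the definition. This is purely a matter of carefully applying \eqref{eq-diff-connections}, \eqref{eq-diff-hess}, the curvature identities of \cite{Yu1}, and keeping track of conjugations; it should be routine once the frame is set up, though tedious. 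I would organize the computation by first recording $(\nabla_X J)X$ in the frame, then $R^L(X,JX,JX,X)$, then citing the precise identity from \cite{Yu1} for $R_{1\bar 1 1\bar 1}$, and finally assembling the pieces.
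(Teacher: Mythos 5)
Your proposal follows essentially the same route as the paper's proof: fix the unitary frame with $e_1=\xi$, compute $(\nabla_XJ)X$ from \eqref{eq-diff-connections} and \eqref{eq-vanishing-1-1-J} to obtain $\|(\nabla_XJ)X\|^2=2\sum_{i=2}^n\bigl|\tfrac12\tau_{1i}^1+\tau_{1i}^{\bar 1}\bigr|^2$, and then apply Corollary 3.1 of \cite{Yu1}, which gives $R^L_{1\bar 11\bar 1}=R_{1\bar 11\bar 1}+\sum_{i=2}^n\bigl(|\tau_{1i}^{\bar 1}|^2-\tfrac12|\tau_{1i}^1|^2\bigr)$, so that the two pieces combine exactly to $H(\xi)$. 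The bookkeeping you flag as the main obstacle does work out as you predict, and the expansion of $R^L(X,JX,JX,X)$ produces only the single component $R^L_{1\bar 11\bar 1}$ by antisymmetry of $R^L$ in its first and last pairs of arguments, so no extra $R^L_{11\bar 1\bar 1}$-type terms appear.
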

\begin{proof}
Note that for any tangent vector fields $X$ and $Y$, by \eqref{eq-diff-connections} and \eqref{eq-vanishing-1-1-J},
\begin{equation}\label{eq-nabla-J}
\begin{split}
&\vv<(\nabla_XJ)(X),Y>\\
=&\vv<(\nabla_XJ)(X),Y>-\vv<(D_XJ)(X),Y>\\
=&\vv<\nabla_X(JX)-D_X(JX),Y>-\vv<J(\nabla_XX-D_XX),Y>\\
=&\frac{1}{2}(\vv<\tau(X,Y),JX>-\vv<\tau(Y,JX),X>)+\vv<\nabla_XX-D_XX,JY>\\
=&\frac{1}{2}(\vv<\tau(X,Y),JX>+\vv<\tau(JX,Y),X>)+\vv<\tau(X,JY),X>\\
=&\frac{1}{2}\vv<\tau(X,Y),JX>+\frac32\vv<\tau(JX,Y),X>.
\end{split}
\end{equation}
Let $e_1=\xi,e_2,\cdots,e_n$ be a local unitary frame. Then, $X=\frac{e_1+\ol{e_1}}{\sqrt 2}$ and $JX=\frac{\ii(e_1-\ol{e_1})}{\sqrt 2}$. By \eqref{eq-nabla-J},
\begin{equation}
(\nabla_XJ)X=-\ii\left(\frac12\tau_{\bar 1\bi}^{\bar 1}+\tau_{\bar 1\bi}^{ 1}\right)e_i+\ii\left(\frac12\tau_{ 1i}^{ 1}+\tau_{1i}^{\bar 1}\right)\ol{e_i}.
\end{equation}
Hence
\begin{equation}
\|(\nabla_XJ)X\|^2=2\sum_{i=2}^n\left|\frac12\tau_{ 1i}^{ 1}+\tau_{1i}^{\bar 1}\right|^2.
\end{equation}
Combining this with  Corollary 3.1 in \cite{Yu1}, we have
\begin{equation}
\begin{split}
&R^L(X,JX,JX,X)-\|(\nabla_XJ)X\|^2\\
=&R^L_{1\bar 11\bar 1}-2\sum_{i=2}^n\left|\frac12\tau_{ 1i}^{ 1}+\tau_{1i}^{\bar 1}\right|^2\\
=&R_{1\bar 1 1\bar 1}+\sum_{i=2}^n\left(|\tau_{1i}^{\bar 1}|^2-\frac{1}{2}|\tau_{1i}^1|^2\right)-2\sum_{i=2}^n\left|\frac12\tau_{ 1i}^{ 1}+\tau_{1i}^{\bar 1}\right|^2\\
=&R_{1\bar 11\bar 1}-\sum_{i=2}^n\left|\tau_{ 1i}^{ 1}+\tau_{1i}^{\bar 1}\right|^2\\
=&H(\xi).
\end{split}
\end{equation}
\end{proof}
At the end of this section, we prove Theorem \ref{thm-comparison}, a general comparison theorem for $L[\log r]$.
\begin{proof}[Proof of Theorem \ref{thm-comparison}]
For simplicity, we write $r_o$ as $r$. When $x\in B_o'(R)\setminus\cut(o)$ where $\cut(o)$ means the cut-locus of $o$, let $\gamma:[0,l]\to B_o(R)$ be the normal minimal geodesic joining $o$ to $x$. Let $e_1,e_2,\cdots,e_n$ be a local unitary frame along $\gamma$ with $$e_1=\frac1{\sqrt 2}(\nabla r-\ii J\nabla r)=\mathbf U(\gamma').$$
Then,
\begin{equation}
r_1=r_{\bar 1}=\frac{1}{\sqrt 2}\ \mbox{and}\ r_\a=r_\ba=0\ \mbox{for $\alpha>1$.}
\end{equation}
Let $f(t)=L[r](\gamma(t))$. It is clear by \eqref{eq-L-f-L} that
$$L[r]=2r_{1\bar 1}=4r_{i\bar j}r_{\bi} r_j,$$
where $r_{i\bar j}=\Hess_D(f)(e_i,\ol{e_j})$. So, by equation (3.27) in the proof of Theorem 3.2 in \cite{Yu2},
\begin{equation}\label{eq-f}
\begin{split}
f'=&-f^2-R_{1\bar 1 1\bar 1}-4\sum_{i=2}^n\left(|r_{1\bi}|^2-2\mbox{Re}\{r_{1\bi}(\tau_{i1}^{\bar 1}+\tau_{i1}^1)/\sqrt 2\}+\frac{1}{4}|\tau_{i1}^1+\tau_{i1}^{\bar 1}|^2\right)\\
=&-f^2-\left(R_{1\bar 1 1\bar 1}-\sum_{i=2}^n|\tau_{i1}^1+\tau_{i1}^{\bar 1}|^2\right)-4\sum_{i=2}^n\left|r_{1\bar i}-\frac{1}{\sqrt 2}(\tau_{\bi\bar 1}^{ 1}+\tau_{\bi\bar1}^{\bar1})\right|^2.\\
\end{split}
\end{equation}
Hence
\begin{equation}
f'+f^2+H(\gamma')\leq 0\leq h'+h^2+H(\gamma')
\end{equation}
for $t\in (0,l]$. Note that $f(r)\sim\frac1r$ as $r\to0^+$. Thus, by comparison of Riccati equation (See \cite{Ro}),
\begin{equation}
L[r](x)=f(l)\leq h(l).
\end{equation}
Then, by direct computation,
\begin{equation}
L[v(r)](x)=(v')^2(v''+v'L[r])(x)\leq (v')^2(v''+h v')(l)\leq 0.
\end{equation}

When $x\in B'_o(R)$ is on the cut-locus of $o$, let $\gamma:[0,l]\to B_o(R)$ be a normal minimal geodesic joining $o$ to $x$. For $\delta\in [0,l]$ small enough, we denote $h_\delta(t)=h(t+\delta)$ for $t\in (0,l-\delta]$. Let
\begin{equation}
w_\delta(t)=\left(\int_0^te^{2\int_\tau^t h_\delta(s)ds}d\tau\right)^{-1}=\left(\int_0^te^{2\int_{\tau+\delta}^{t+\delta} h(s)ds}d\tau\right)^{-1}>0
\end{equation}
when $t\in (0,l-\delta]$. It is not hard to check that $w_\delta$ satisfies
\begin{equation}\label{eq-w}
w'_\delta+w_\delta^2+2h_\delta w_\delta=0
\end{equation}
on $(0,l-\delta]$ and $w_\delta(t)\sim\frac1t$ as $t\to0^+$. Moreover,
\begin{equation}
\lim_{\delta\to 0^+}w_\delta(l-\delta)=\left(\lim_{\delta\to 0^+}\int_\delta^{l}e^{2\int_{\tau}^{l} h(s)ds}d\tau\right)^{-1}=0
\end{equation}
since $h(t)\sim\frac1t$ as $t\to 0^+$. For each $\e>0$, let $\delta\in (0,l)$ be small enough such that
\begin{equation}\label{eq-w-epsilon}
w_\delta(l-\delta)<\e.
\end{equation}
Let $p=\gamma(\delta)$. Then, $x$ is not a cut point of $p$. By the triangle inequality, we have
\begin{equation}
r\leq r_p+\delta\ \mbox{and}\ r(x)=r_p(x)+\delta.
\end{equation}
Moreover,  let $f_\delta(t)=L[r_p](\gamma(t+\delta))$, by \eqref{eq-f}
\begin{equation}
f'_\delta(t)+f_\delta^2(t)+H(\gamma'(t+\delta))\leq 0
\end{equation}
for $t\in [0,l-\delta]$. On the other hand, by \eqref{eq-w},
\begin{equation}\label{eq-f-delta}
\begin{split}
&(h_\delta+w_\delta)'(t)+(h_\delta+w_\delta)^2(t)+H(\gamma'(t+\delta))\\
=&h'(t+\delta)+h^2(t+\delta)+H(\gamma'(t+\delta))\\
\geq&0
\end{split}
\end{equation}
for any $t\in (0,l-\delta]$. So, by comparison of Ricatti equation as before, \eqref{eq-f-delta} and \eqref{eq-w-epsilon}, we know that
\begin{equation}
L[r_p](x)=f_\delta(l-\delta)\leq h_\delta(l-\delta)+w_\delta(l-\delta)\leq h(l)+\e
\end{equation}
for $t\in [0,l-\delta]$. Therefore,
\begin{equation}
\begin{split}
&L[v(r_p+\delta)](x)\\
=&v'(l)^2(v''(l)+v'(l)L[r_p](x))\\
\leq&(v'(l))^2(v''(l)+v'(l)(h(l)+\e))\\
=&(v'(l))^3\e
\end{split}
\end{equation}
This proves that $L[r_o](x)\leq 0$ in the sense of barrier.

When the holomorphic sectional curvature in $B_o(R)$ is not less than $K$, let
\begin{equation}
h(t)=\left\{\begin{array}{cl}\sqrt K\cot(\sqrt K t)&K>0\\
\frac1t&K=0\\
\sqrt{-K}\coth(\sqrt{-K}t)&K<0.
\end{array}\right.
\end{equation}
Then, $h$ satisfies \eqref{eq-Riccati-h}. Finally, by solving the equation $v''+hv'=0$, we get \eqref{eq-comparison-K}. This completes the proof of the theorem.
\end{proof}

\section{Maximum principle and three circle theorem}
In this section, motivated by Calabi's work \cite{Ca}, we obtain a general maximum principle and three circle theorem, and apply them to prove Theorem \ref{thm-max-principle-AH} and Theorem \ref{thm-three-circle-AH}.
\begin{thm}\label{thm-max-principle}
Let $(M^n,g)$ be a Riemannian manifold, $T_1,T_2,\cdots, T_m$ be $(1,1)$-tensors on $M$, and $Q$ be a $(0,3)$ tensor on $M$. For any smooth function $f$ on $M$, define
\begin{equation}
\mathscr{L}[f]=\sum_{i=1}^m\Hess(f)(T_i\nabla f,T_i\nabla f)+Q(\nabla f,\nabla f,\nabla f).
\end{equation}
Let $\Omega$ be a precompact open subset in $M$, and $u$ and $v$ be two continuous functions on $\ol\Omega$ such that for any $\e>0$ and $x\in \Omega$, there are two smooth functions $u_{x,\e}$ and $v_{x,\e}$ defined on some neighborhood $U_{x,\e}$ of $x$, and a positive constant $c_x$ independent of $\e$, satisfying the following properties:
\begin{enumerate}
\item $u\geq u_{x,\e}$ on $U_{x,\e}$ and $u(x)=u_{x,\e}(x)$;
\item $\mathscr{L}[u_{x,\e}](x)\geq-\e$;
\item $v\leq v_{x,\e}$ on $U_{x,\e}$ and $v(x)=v_{x,\e}(x)$;
\item $\mathscr{L}[v_{x,\e}](x)\leq \e$;
\item $\sum_{i=1}^m\vv<T_i\nabla u_{x,\e},\nabla u_{x,\e}>^2(x)+\sum_{i=1}^m\vv<T_i\nabla v_{x,\e},\nabla v_{x,\e}>^2(x)\geq c_x$;
\item $u|_{\p\Omega}\leq v|_{\p\Omega}$.
\end{enumerate}
Then, $u\leq v$ in $\Omega$.
\end{thm}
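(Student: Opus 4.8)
The plan is to run a barrier/maximum-principle argument in the spirit of Calabi's trick, but applied to well-chosen \emph{convex and concave reparametrizations} of $u$ and $v$ rather than to $u$ and $v$ themselves. The basic computation I would use is that for any smooth $\phi\colon\R\to\R$ and any smooth function $w$,
\begin{equation*}
\mathscr{L}[\phi\circ w]=\phi'(w)^3\,\mathscr{L}[w]+\phi'(w)^2\phi''(w)\sum_{i=1}^m\vv<T_i\nabla w,\nabla w>^2,
\end{equation*}
which follows at once from $\nabla(\phi\circ w)=\phi'(w)\nabla w$, $\Hess(\phi\circ w)=\phi''(w)\,dw\otimes dw+\phi'(w)\Hess(w)$, and the trilinearity of $Q$. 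Thus, if $\phi$ is increasing and strictly convex then $\mathscr{L}[\phi\circ w]$ exceeds $\phi'(w)^3\mathscr{L}[w]$ by a term that is strictly positive wherever $\sum_i\vv<T_i\nabla w,\nabla w>^2>0$, whereas if $\phi$ is increasing and concave then $\mathscr{L}[\phi\circ w]\le\phi'(w)^3\mathscr{L}[w]$. Concretely, for $\eta>0$ I would take
\begin{equation*}
\chi_\eta(t)=\frac{e^{\eta t}-1}{\eta},\qquad \psi_\eta(t)=\frac{1-e^{-\eta t}}{\eta};
\end{equation*}
then $\chi_\eta$ is increasing and strictly convex, $\psi_\eta$ is increasing and strictly concave, both maps and their inverses tend to the identity locally uniformly as $\eta\to0^+$, and $\chi_\eta(t)-\psi_\eta(t)=\tfrac{2}{\eta}(\cosh(\eta t)-1)\ge0$ is uniformly small on bounded $t$-intervals. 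Put $C_\eta:=\sup_{\ol\Omega}(\chi_\eta\circ v-\psi_\eta\circ v)$, which is finite (since $\ol\Omega$ is compact and $v$ continuous) and satisfies $C_\eta\to0$ as $\eta\to0^+$.

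It then suffices to show that, for each fixed $\eta>0$, $\chi_\eta\circ u\le\psi_\eta\circ v+C_\eta$ on $\Omega$; applying $\chi_\eta^{-1}$ and letting $\eta\to0^+$ gives $u\le v$. Fix $\eta$ and set $\Phi:=\chi_\eta\circ u-\psi_\eta\circ v-C_\eta\in C^0(\ol\Omega)$. On $\p\Omega$, $u\le v$ forces $\chi_\eta\circ u\le\chi_\eta\circ v=\psi_\eta\circ v+(\chi_\eta\circ v-\psi_\eta\circ v)\le\psi_\eta\circ v+C_\eta$, so $\Phi\le0$ there. Suppose, for contradiction, that $\max_{\ol\Omega}\Phi=\delta>0$, attained at an interior point $x_0\in\Omega$. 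Given $\e>0$, apply the hypotheses at $x_0$ to obtain barriers $u_{x_0,\e},v_{x_0,\e}$ on a neighborhood $U_{x_0,\e}$; since $\chi_\eta,\psi_\eta$ are increasing, $\chi_\eta\circ u_{x_0,\e}\le\chi_\eta\circ u$ and $\psi_\eta\circ v\le\psi_\eta\circ v_{x_0,\e}$ with equality at $x_0$, so the smooth function $\Psi_\e:=\chi_\eta\circ u_{x_0,\e}-\psi_\eta\circ v_{x_0,\e}-C_\eta$ satisfies $\Psi_\e\le\Phi\le\delta=\Psi_\e(x_0)$ near $x_0$. Hence $x_0$ is a local maximum of $\Psi_\e$, giving $\nabla\Psi_\e(x_0)=0$ and $\Hess(\Psi_\e)(x_0)\le0$.

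From $\nabla\Psi_\e(x_0)=0$ the gradients of $\chi_\eta\circ u_{x_0,\e}$ and of $\psi_\eta\circ v_{x_0,\e}$ at $x_0$ agree; call the common value $P$. Substituting $T_iP$ into $\Hess(\Psi_\e)(x_0)\le0$, summing over $i$, and adding $Q(P,P,P)(x_0)$ to both sides yields $\mathscr{L}[\chi_\eta\circ u_{x_0,\e}](x_0)\le\mathscr{L}[\psi_\eta\circ v_{x_0,\e}](x_0)$. Expanding both sides by the identity above (with $u_{x_0,\e}(x_0)=u(x_0)$, $v_{x_0,\e}(x_0)=v(x_0)$), and then using $\mathscr{L}[u_{x_0,\e}](x_0)\ge-\e$, $\mathscr{L}[v_{x_0,\e}](x_0)\le\e$ (hypotheses (2), (4)) together with $\chi''_\eta>0>\psi''_\eta$, I obtain
\begin{equation*}
\chi'_\eta(u(x_0))^2\chi''_\eta(u(x_0))\,a_\e+\psi'_\eta(v(x_0))^2\,|\psi''_\eta(v(x_0))|\,b_\e\ \le\ \bigl(\chi'_\eta(u(x_0))^3+\psi'_\eta(v(x_0))^3\bigr)\,\e,
\end{equation*}
where $a_\e:=\sum_i\vv<T_i\nabla u_{x_0,\e},\nabla u_{x_0,\e}>^2(x_0)$ and $b_\e:=\sum_i\vv<T_i\nabla v_{x_0,\e},\nabla v_{x_0,\e}>^2(x_0)$. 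The left side is $\ge\kappa(a_\e+b_\e)$, where $\kappa:=\min\{\chi'_\eta(u(x_0))^2\chi''_\eta(u(x_0)),\ \psi'_\eta(v(x_0))^2|\psi''_\eta(v(x_0))|\}>0$ depends only on $\eta$ and $x_0$; by hypothesis (5), $a_\e+b_\e\ge c_{x_0}$, so the left side is $\ge\kappa c_{x_0}>0$ uniformly in $\e$, while the right side tends to $0$ as $\e\to0^+$ — a contradiction. This proves $\chi_\eta\circ u\le\psi_\eta\circ v+C_\eta$ on $\Omega$, hence the theorem.

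The delicate point — and the reason the bare maximum principle is not enough — is the sign of the reparametrization term $\phi'(w)^2\phi''(w)\sum_i\vv<T_i\nabla w,\nabla w>^2$. Comparing $u$ and $v$ directly, the touching-point inequality is merely $\mathscr{L}[u_{x_0,\e}](x_0)\le\mathscr{L}[v_{x_0,\e}](x_0)$, which with hypotheses (2), (4) is vacuous and never involves hypothesis (5). Transforming both sides by the same convex function such as $\exp$ does bring $a_\e,b_\e$ into play, but the term coming from the supersolution side then enters with the unfavorable sign — and, since the common gradient of $e^{u}$ and $e^{v}$ at the contact point forces $\nabla v_{x_0,\e}(x_0)=e^{u(x_0)-v(x_0)}\nabla u_{x_0,\e}(x_0)$ with $e^{u(x_0)-v(x_0)}>1$, it even dominates — so again nothing is gained. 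The remedy is the asymmetry above: a strictly convex reparametrization of the subsolution and a strictly concave one of the supersolution, which sends \emph{both} gradient quantities to the favorable side, so that hypothesis (5), which bounds only the sum $a_\e+b_\e$ from below, becomes usable. The correction $C_\eta$ and the limit $\eta\to0^+$ are then just the bookkeeping needed to pass from the perturbed inequality back to the sharp conclusion $u\le v$.
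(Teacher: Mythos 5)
Your proof is correct and follows essentially the same strategy as the paper: both perturb the subsolution by an increasing strictly convex reparametrization and the supersolution by an increasing strictly concave one, use the identity $\mathscr{L}[\phi\circ w]=\phi'(w)^3\mathscr{L}[w]+\phi'(w)^2\phi''(w)\sum_i\vv<T_i\nabla w,\nabla w>^2$ at an interior touching point of the barriers, and let hypothesis (5) produce a contradiction uniformly in $\e$ before sending the perturbation parameter to zero. The only differences are cosmetic: the paper uses $t\mapsto\frac{1}{1+\delta}\ln(e^t+\delta)$ and $t\mapsto\frac{1}{1-\delta}\ln(e^t-\delta)$ with boundary-adjusting constants where you use $\frac{e^{\eta t}-1}{\eta}$, $\frac{1-e^{-\eta t}}{\eta}$ and the constant $C_\eta$.
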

\begin{proof}We will proceed by contradiction. Suppose $u>v$ for some point in $\Omega$. For each $\delta>0$ small enough, let $u_\delta=a_\delta\ln(e^u+\delta)-b_\delta$ where $a_\delta=\frac{1}{1+\delta}$ and $$b_\delta=\frac{\ln(e^{-\delta\min_{\p\Omega}u}+\delta e^{-(1+\delta)\min_{\p\Omega}u})}{1+\delta}.$$
Then, $u_\delta|_{\p\Omega}\leq u|_{\p \Omega}$. Similarly, let $v_\delta=A_\delta\ln(e^v-\delta)+B_\delta$ where $A_\delta=\frac{1}{1-\delta}$ and
$$B_\delta=-\frac{\ln(e^{\delta\min_{\p\Omega}v}-\delta e^{-(1-\delta)\min_{\p\Omega}v})}{1-\delta}.$$
Then, $v_\delta|_{\p\Omega}\geq v|_{\p\Omega}$. Note that $u_\delta\to u$ and $v_\delta\to v$ as $\delta\to0^+$. So, we can fix $\delta>0$  small enough, such that $\max_{\ol\Omega}(u_\delta-v_\delta)>0$. Let $x_\delta\in\Omega$ be a maximum point of $u_\delta-v_\delta$. For each $\e>0$, let
\begin{equation}
u_{\delta,\e}=a_\delta\ln(e^{u_{x_\delta,\e}}+\delta)-b_\delta
\end{equation}
and
\begin{equation}
v_{\delta,\e}=A_\delta\ln(e^{v_{x_\delta,\e}}-\delta)+B_\delta.
\end{equation}
Then, it is clear that $x_\delta$ is a maximum point of $u_{\delta,\e}-v_{\delta,\e}$ in the neighborhood $U_{x_\delta,\e}$ of $x_\delta$ by the assumptions (1) and (3). So,
\begin{equation}
\nabla u_{\delta,\e}(x_\delta)=\nabla v_{\delta,\e}(x_\delta)
\end{equation}
and
\begin{equation}
\Hess(u_{\delta,\e})(x_\delta)\leq \Hess(v_{\delta,\e})(x_\delta).
\end{equation}
Thus,
\begin{equation}\label{eq-Lu-leq-Lv}
\mathscr{L}[u_{\delta,\e}](x_\delta)\leq \mathscr{L}[v_{\delta,\e}](x_\delta).
\end{equation}
Moreover, by direct computation and the assumptions (2) and (4),
\begin{equation}
\begin{split}
\mathscr{L}[u_{\delta,\e}](x_\delta)=&\left(\frac{a_\delta e^{u(x_\delta)}}{e^{u(x_\delta)}+\delta}\right)^3\left(\mathscr{L}[u_{x_\delta,\e}](x_\delta)+\frac{\delta}{e^{u(x_\delta)}+\delta}\sum_{i=1}^m\vv<T_i\nabla u_{x_\delta,\e},\nabla u_{x_\delta,\e}>^2(x_\delta)\right)\\
\geq&\left(\frac{a_\delta e^{u(x_\delta)}}{e^{u(x_\delta)}+\delta}\right)^3\left(-\e+\frac{\delta}{e^{u(x_\delta)}+\delta}\sum_{i=1}^m\vv<T_i\nabla u_{x_\delta,\e},\nabla u_{x_\delta,\e}>^2(x_\delta)\right)\\
\end{split}
\end{equation}
and
\begin{equation}
\begin{split}
\mathscr{L}[v_{\delta,\e}](x_\delta)=&\left(\frac{A_\delta e^{v(x_\delta)}}{e^{v(x_\delta)}-\delta}\right)^3\left(\mathscr{L}[v_{x_\delta,\e}](x_\delta)-\frac{\delta}{e^{v(x_\delta)}-\delta}\sum_{i=1}^m\vv<T_i\nabla v_{x_\delta,\e},\nabla v_{x_\delta,\e}>^2(x_\delta)\right)\\
\leq&\left(\frac{A_\delta e^{v(x_\delta)}}{e^{v(x_\delta)}-\delta}\right)^3\left(\e-\frac{\delta}{e^{v(x_\delta)}-\delta}\sum_{i=1}^m\vv<T_i\nabla v_{x_\delta,\e},\nabla v_{x_\delta,\e}>^2\right).\\
\end{split}
\end{equation}
Then, by assumption (5),
\begin{equation}
\begin{split}
&\mathscr{L}[u_{\delta,\e}](x_\delta)-\mathscr{L}[v_{\delta,\e}](x_\delta)\\
\geq&-\left[\left(\frac{a_\delta e^{u(x_\delta)}}{e^{u(x_\delta)}+\delta}\right)^3+\left(\frac{A_\delta e^{v(x_\delta)}}{e^{v(x_\delta)}-\delta}\right)^3\right]\e+\min\left\{\frac{\delta a_\delta^3 e^{3u(x_\delta)}}{(e^{u(x_\delta)}+\delta)^4},\frac{\delta A_\delta^3 e^{3v(x_\delta)}}{(e^{v(x_\delta)}-\delta)^4}\right\}\\
&\times\left(\sum_{i=1}^m\vv<T_i\nabla u_{x_\delta,\e},\nabla u_{x_\delta,\e}>^2(x_\delta)+\sum_{i=1}^m\vv<T_i\nabla v_{x_\delta,\e},\nabla v_{x_\delta,\e}>^2(x_\delta)\right)\\
\geq&-\left[\left(\frac{a_\delta e^{u(x_\delta)}}{e^{u(x_\delta)}+\delta}\right)^3+\left(\frac{A_\delta e^{v(x_\delta)}}{e^{v(x_\delta)}-\delta}\right)^3\right]\e+\min\left\{\frac{\delta a_\delta^3 e^{3u(x_\delta)}}{(e^{u(x_\delta)}+\delta)^4},\frac{\delta A_\delta^3 e^{3v(x_\delta)}}{(e^{v(x_\delta)}-\delta)^4}\right\}c_{x_\delta}\\
>&0
\end{split}
\end{equation}
when $\epsilon>0$ is small enough. This contradicts \eqref{eq-Lu-leq-Lv} and completes the proof of the theorem.
\end{proof}
\begin{rem}
Assumption (5) in Theorem \ref{thm-max-principle} can be viewed as a nondegenerate condition on $T_1,T_2,\cdots T_m$. It is necessary for the maximum principle. Otherwise, consider the case $T_1=T_2=\cdots=T_m=0$. It is clear that the maximum principle is not true in this case because the crucial second order terms in $\mathscr{L}$ vanishes. In our application, $\mathscr{L}=L$ with $T_1=I$ and $T_2=J$ and $v$ is a strictly increasing function of $r$. So, assumption (5) in Theorem \ref{thm-max-principle} will be automatically satisfied.
\end{rem}
By the maximum principle above, we have the following generalized three circle theorem.
\begin{thm}\label{thm-general-three-circle}
Let $(M^n,g)$ be a Riemannian manifold, $T_1,T_2,\cdots, T_m$ be $(1,1)$-tensors on $M$, and $Q$ be a $(0,3)$ tensor on $M$. For any smooth function $f$ on $M$, define
\begin{equation}
\mathscr{L}[f]=\sum_{i=1}^m\Hess(f)(T_i\nabla f,T_i\nabla f)+Q(\nabla f,\nabla f,\nabla f).
\end{equation}
Let $\Omega$ be an open subset in $M$, and $u$ and $v$ be two continuous function on $\Omega$ such that for any $\e>0$ and $x\in \Omega$, there are two smooth functions $u_{x,\e}$ and $v_{x,\e}$ defined on some neighborhood $U_{x,\e}$ of $x$, and a positive constant $c_x$ independent of $\e$, satisfying the following properties:
\begin{enumerate}
\item $u\geq u_{x,\e}$ on $U_{x,\e}$ and $u(x)=u_{x,\e}(x)$;
\item $\mathscr{L}[u_{x,\e}](x)\geq-\e$;
\item $v\leq v_{x,\e}$ on $U_{x,\e}$ and $v(x)=v_{x,\e}(x)$;
\item $\mathscr{L}[v_{x,\e}](x)\leq \e$;
\item $\sum_{i=1}^m\vv<T_i\nabla u_{x,\e},\nabla u_{x,\e}>^2(x)+\sum_{i=1}^m\vv<T_i\nabla v_{x,\e},\nabla v_{x,\e}>^2(x)\geq c_x$;
\item $v:\Omega\to (\inf_\Omega v,\sup_\Omega v)$ is proper;
\item $M_v(u,t):=\max_{x\in S_v(t)}u(x)$ is increasing on $t\in (\inf_\Omega v,\sup_\Omega v)$ where $S_v(t)=\{x\in\Omega\ |\ v(x)=t\}.$
\end{enumerate}
Then, $M_v(u,t)$ is a convex function of $t$ in $(\inf_\Omega v,\sup_\Omega v)$.
\end{thm}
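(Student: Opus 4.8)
The plan is to reduce the convexity of $M_v(u,t)$ to the maximum principle Theorem~\ref{thm-max-principle} applied on suitable sublevel/superlevel regions. Recall that a function $\phi$ on an interval is convex if and only if it lies below every chord, i.e.\ for any $t_1<t_2$ in the interval and any affine function $\ell$ with $\ell(t_1)\ge\phi(t_1)$ and $\ell(t_2)\ge\phi(t_2)$, one has $\phi\le\ell$ on $[t_1,t_2]$. So fix $t_1<t_2$ in $(\inf_\Omega v,\sup_\Omega v)$, let $\ell(t)=\a t+\be$ be the affine function with $\ell(t_i)=M_v(u,t_i)$, and set $\Omega'=\{x\in\Omega\ |\ t_1<v(x)<t_2\}$. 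By assumption (6), $v:\Omega\to(\inf_\Omega v,\sup_\Omega v)$ is proper, so $\ol{\Omega'}\subset v^{-1}([t_1,t_2])$ is compact; hence $\Omega'$ is a precompact open subset of $M$, and $\p\Omega'\subset S_v(t_1)\cup S_v(t_2)$.

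The next step is to run Theorem~\ref{thm-max-principle} with the pair $(u,\,v^\ell)$ where $v^\ell:=\ell\circ v=\a v+\be$ on $\ol{\Omega'}$. The hypotheses (1)--(5) of the maximum principle for $u$ are exactly the hypotheses (1), (2), (5) imposed here, using the same local barriers $u_{x,\e}$. For $v^\ell$ I would take $v^\ell_{x,\e}:=\a v_{x,\e}+\be$: since $v\le v_{x,\e}$ with equality at $x$ and $\a>0$ (this is where I need $\a\ge 0$; the case $\a<0$ cannot occur because (7) forces $M_v(u,\cdot)$ increasing, so $M_v(u,t_2)\ge M_v(u,t_1)$, giving $\a\ge0$, and if $\a=0$ one argues directly or perturbs), we get $v^\ell\le v^\ell_{x,\e}$ with equality at $x$; moreover $\mathscr L[\a v_{x,\e}+\be](x)=\a^3\mathscr L[v_{x,\e}](x)\le \a^3\e$, which is $\le\e'$ for $\e$ small, so (4) holds; and $\langle T_i\nabla v^\ell_{x,\e},\nabla v^\ell_{x,\e}\rangle^2=\a^4\langle T_i\nabla v_{x,\e},\nabla v_{x,\e}\rangle^2$, so (5) is inherited (possibly with a smaller constant $\a^4 c_x$, still positive when $\a>0$). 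Finally the boundary condition (6): on $\p\Omega'$ we have $v(x)\in\{t_1,t_2\}$, hence $v^\ell(x)=\ell(v(x))=M_v(u,v(x))\ge u(x)$ by definition of $M_v(u,\cdot)$ as a maximum over the level set. Theorem~\ref{thm-max-principle} then yields $u\le v^\ell=\a v+\be$ on $\Omega'$, and therefore on each level set $S_v(t)$ for $t\in(t_1,t_2)$ we get $M_v(u,t)=\max_{S_v(t)}u\le \a t+\be=\ell(t)$. Combined with $M_v(u,t_i)=\ell(t_i)$ and continuity, this is precisely the chord inequality, so $M_v(u,\cdot)$ is convex on $(\inf_\Omega v,\sup_\Omega v)$.

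The main obstacle is the degenerate case $\a=0$ and, more generally, making sure all the ``$\e$-sloppy'' barrier estimates survive the affine rescaling; the cube in $\mathscr L[\a f+\be]=\a^3\mathscr L[f]$ is the reason $\a\ge0$ matters, and one must double-check that when $\a>0$ the constants $c_x$ in assumption (5) can be replaced by $\a^4 c_x>0$ uniformly (they can, since $\a$ is a fixed global constant once $t_1,t_2$ are fixed). If $\a=0$ then $\ell$ is constant equal to $M_v(u,t_1)=M_v(u,t_2)$; by monotonicity (7), $M_v(u,t)$ is squeezed between these equal values for $t\in[t_1,t_2]$, so it is constant there and the chord inequality is trivial. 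One also needs to note that $\Omega'$ could be empty or disconnected, but the argument above is insensitive to this: emptiness makes the claim vacuous, and the maximum principle applies componentwise.

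A remaining subtlety worth flagging: Theorem~\ref{thm-max-principle} requires $u,v^\ell\in C^0(\ol{\Omega'})$, which holds since $u,v$ are assumed continuous on $\Omega$ and $\ol{\Omega'}\subset\Omega$ by properness; and it requires the barriers to be defined on neighborhoods \emph{inside} $\Omega$, which is automatic for $x\in\Omega'\subset\Omega$ after shrinking $U_{x,\e}$. With these points dispatched, the proof is a clean corollary of the maximum principle, exactly in the spirit of how Hadamard's three circle theorem follows from the maximum principle for $\log|f|$ on annuli.
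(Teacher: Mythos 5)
Your proposal is correct and follows essentially the same route as the paper: the paper likewise splits into the degenerate case $M_v(u,t_1)=M_v(u,t_3)$ (settled by the monotonicity assumption (7)) and the case of positive slope, where it applies Theorem~\ref{thm-max-principle} to $u$ and the comparison function $\tilde v=\ell\circ v$ on the annular region to obtain the chord inequality. Your additional checks (precompactness of the annulus via properness, the scaling $\mathscr L[\a v_{x,\e}+\be]=\a^3\mathscr L[v_{x,\e}]$ and the rescaled nondegeneracy constant) are details the paper leaves implicit, but the argument is the same.
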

\begin{proof}
For any $\inf_\Omega u<t_1<t_2<t_3<\sup_\Omega u$.
If $M_v(u,t_3)=M_v(u,t_1)$, by that $M_v(u,t)$ is increasing on $t$, we know that $M_v(u,t)$ is a constant when $t\in [t_1,t_3]$. So,
\begin{equation}
M_v(u,t_2)=\frac{t_3-t_2}{t_3-t_1}M_v(u,t_1)+\frac{t_2-t_1}{t_3-t_1}M_v(u,t_3).
\end{equation}
When $M_v(u,t_3)>M_v(u,t_1)$. Let
\begin{equation}
\tilde v=\frac{M_v(u,t_3)-M_v(u,t_1)}{t_3-t_1}v+\frac{M_v(u,t_1)t_3-M_v(u,t_3)t_1}{t_3-t_1}.
\end{equation}
Then  $\tilde v\geq u$ on $S_v(t_1)\cup S_v(t_3)$. So, by Theorem \ref{thm-max-principle},
\begin{equation}
\tilde v\geq u
\end{equation}
in $A_v(t_1,t_3):=\{x\in\Omega\ |\ t_1<v(x)<t_3\}$. Then, for any $x\in S_v(t_2)$,
\begin{equation}
u(x)\leq \frac{M_v(u,t_3)-M_v(u,t_1)}{t_3-t_1}t_2+\frac{M_v(u,t_1)t_3-M_v(u,t_3)t_1}{t_3-t_1}.
\end{equation}
and hence
\begin{equation}
M_v(u,t_2)\leq \frac{t_3-t_2}{t_3-t_1}M_v(u,t_1)+\frac{t_2-t_1}{t_3-t_1}M_v(u,t_3).
\end{equation}
This completes the proof of the theorem.
\end{proof}
As an application of the general maximum principle  and three circle theorem. We come to prove Theorem \ref{thm-max-principle-AH} and Theorem \ref{thm-three-circle-AH}, a maximum principle and a three circle theorem for functions satisfying $L[u]\geq 0$ on almost Hermitian manifolds.

\begin{proof}[Proof of Theorem \ref{thm-max-principle-AH}]
Let $o\in M$ be such that $r(o,\ol\Omega)>1$ and suppose that $\ol\Omega\subset B_o(R)$. Let $-K$ with $K>0$ be a lower bound for the holomorphic sectional curvature of $M$ in $B_o(R)$. Then, for any given $\delta>0$, by Theorem \ref{thm-comparison}, we know that $u$ and $v=\max_{\p\Omega}u+\delta(\log\tanh( \sqrt Kr_o/2)-\log\tanh(\sqrt K/2))$ satisfies the assumptions of Theorem \ref{thm-max-principle} with $\mathscr L=L$ on $\Omega$. So
\begin{equation}
u\leq \max_{\p\Omega}u+\delta\left(\log\tanh( \sqrt Kr_o/2)-\log\tanh(\sqrt K/2)\right)
\end{equation}
for any $\delta>0$. Letting $\delta\to 0^+$ in the last inequality, we get the conclusion.
\end{proof}

\begin{proof}[Proof of Theorem \ref{thm-three-circle-AH}]
By Theorem \ref{thm-comparison} and Theorem \ref{thm-max-principle-AH}, we know that $u$ and $v(r_o)$ satisfy the assumptions of Theorem \ref{thm-general-three-circle} for $\mathscr L=L$ and $\Omega=B'_o(R)$. Hence, by Theorem \ref{thm-general-three-circle}, we get the general conclusion.

The special case for pluri-subharmonic functions comes from the general conclusion by using the fact that pluri-subharmonic functions satisfy $L[u]\geq 0$ automatically, by Lemma \ref{lem-ddbar-f} and \eqref{eq-L-f-L}.

The special case for holomorphic functions comes from the special case for pluri-subharmonic functions by using the fact that $\log (|f_1|^2+|f_2|^2+\cdots+|f_k|^2+\e)$ is pluri-subharmonic by Lemma \ref{lem-holo-psh}, and letting $\e\to 0^+$.
\end{proof}
\section{Almost Hermitian manifolds with slightly negative holomorphic sectional curvature}

In this section, we prove the main results for almost Hermitian manifolds by applying the framework that set up in the last section. We first need the following lemmas on asymptotic behaviors of solutions for the ordinary differential equations related to the equations appeared in Theorem \ref{thm-comparison}.
\begin{lem}\label{lem-second-ODE}
Let $q(t)$ be a nonnegative continuous function on $[0,+\infty)$ and $u\in C^2([0,+\infty))$ be the solution of
$$u''-qu=0$$
with $u(0)=0$ and $u'(0)=1$. Then,
\begin{enumerate}
\item $u'$ is increasing;
\item $u'(+\infty):=\D\lim_{t\to +\infty} u'(t)<+\infty$ if and only if $\D\int_0^{+\infty}tq(t)dt<+\infty$;
\item when $\D\int_0^{+\infty}tq(t)dt<+\infty$, $u'(+\infty)\leq I(q).$
\end{enumerate}
\end{lem}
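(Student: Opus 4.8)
The plan is to analyze the ODE $u'' - qu = 0$ by exploiting sign and monotonicity, then to compare against the explicit bound $I(q) = \exp(\int_0^\infty tq)$. For item (1), since $u(0)=0$ and $u'(0)=1$ and $q\geq 0$, a bootstrap argument shows $u,u'>0$ on $(0,\infty)$: as long as $u\geq 0$ we have $u'' = qu \geq 0$, so $u'$ is nondecreasing, hence $u'\geq 1>0$, hence $u$ is increasing and stays positive; thus the hypothesis $u\geq 0$ never fails. Consequently $u'' = qu \geq 0$ everywhere, which is exactly the statement that $u'$ is increasing. In particular $u'(+\infty)$ exists in $(1,+\infty]$.

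For item (2), integrate the equation twice. From $u''=qu$ we get $u'(t) = 1 + \int_0^t q(s)u(s)\,ds$, and since $u'$ is increasing with $u'(0)=1$ we have $u(s)\leq u'(s)s \leq \cdots$; more usefully, $u(s)\leq s\,u'(s)$ because $u(s) = \int_0^s u'(\sigma)\,d\sigma \leq s\,u'(s)$. So
$$u'(t) = 1 + \int_0^t q(s)u(s)\,ds \leq 1 + \int_0^t s\,q(s)\,u'(s)\,ds.$$
If $\int_0^\infty sq(s)\,ds < +\infty$, Gronwall's inequality gives $u'(t)\leq \exp\left(\int_0^t sq(s)\,ds\right)\leq I(q)$, so $u'(+\infty)\leq I(q)<+\infty$; this simultaneously proves the ``if'' direction of (2) and all of (3). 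Conversely, if $u'(+\infty) = U < +\infty$, then $u(s)\leq Us$, and from $u'(t) - 1 = \int_0^t q(s)u(s)\,ds \geq \int_0^t q(s)u(s)\,ds$ together with $u(s)\geq u'(0)s = s$ (valid since $u'\geq 1$) we get
$$U - 1 = \int_0^\infty q(s)u(s)\,ds \geq \int_0^\infty s\,q(s)\,ds,$$
so the integral is finite. This gives the ``only if'' direction of (2).

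The only mildly delicate point is making the lower bound $u(s)\geq s$ and the upper bound $u(s)\leq Us$ rigorous simultaneously with the integrability bookkeeping; both follow cleanly from $1\leq u'(\sigma)\leq U$ on the relevant range by integrating from $0$ to $s$, so there is no real obstacle here. The main step to get right is the Gronwall estimate in item (2)/(3): one should phrase it as $\frac{d}{dt}\log u'(t) = \frac{q(t)u(t)}{u'(t)} \leq t\,q(t)$ using $u(t)\leq t\,u'(t)$, and then integrate to obtain $\log u'(t) \leq \int_0^t s\,q(s)\,ds$, whence $u'(+\infty)\leq I(q)$. Letting $t\to+\infty$ throughout, and noting $I(q)$ is finite precisely when $\int_0^\infty tq\,dt<\infty$, ties the three items together.
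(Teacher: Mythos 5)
Your proposal is correct and follows essentially the same route as the paper: the positivity/convexity bootstrap for (1), the lower bound $u(s)\geq s$ plugged into $u'(+\infty)-1=\int_0^\infty qu$ for the ``only if'' direction of (2), and the inequality $u(t)\leq t\,u'(t)$ combined with a Gronwall argument for (3). The only cosmetic difference is that you integrate $(\log u')'\leq tq$ directly, while the paper runs the equivalent integrating-factor computation for $F(t)=\int_0^t qu$ (note $u'=1+F$, so these are the same estimate); your redundant line ``$\geq\int_0^t q(s)u(s)\,ds$'' should read ``$\geq\int_0^t s\,q(s)\,ds$'', and $u'(+\infty)$ lies in $[1,+\infty]$ rather than $(1,+\infty]$, but neither slip affects the argument.
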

\begin{proof}
It is clear that $u>0$ when $t>0$ and hence $u''=qu\geq0$ on $[0,+\infty)$. So, $u'(t)$ is increasing and
$$u'(t)\geq u'(0)=1.$$
Thus $u(t)\geq t$. If $u'(+\infty)<+\infty$, by that
\begin{equation}
u'(+\infty)-1=\int_0^{+\infty}u''(s)ds=\int_0^{+\infty}q(s)u(s)ds\geq \int_0^{+\infty}sq(s)ds,
\end{equation}
one has
\begin{equation}
\int_0^{+\infty}tq(t)dt<+\infty.
\end{equation}
On the other hand, when $\int_0^{+\infty}tq(t)dt<+\infty$, note that
\begin{equation}
\begin{split}
u(t)=&\int_0^tu'(s)ds\\
=&t+\int_0^t\int_0^su''(\tau)d\tau ds\\
=&t+\int_0^t\int_0^sq(\tau)u(\tau)d\tau ds\\
=&t+\int_0^t(t-\tau)q(\tau)u(\tau)d\tau \\
\leq&t\left(1+\int_0^tq(\tau)u(\tau)d\tau\right). \\
\end{split}
\end{equation}
Let $F(t)=\int_0^tq(\tau)u(\tau)d\tau$. Then, by the last inequality,
\begin{equation}
\begin{split}
F'(t)=q(t)u(t)\leq tq(t)\left(1+F(t)\right).
\end{split}
\end{equation}
So,
\begin{equation}
\left[e^{-\int_0^ts q(s)ds}F(t)\right]'=e^{-\int_0^ts q(s)ds}[F'(t)-tq(t)F(t)]\leq e^{-\int_0^ts q(s)ds}tq(t).
\end{equation}
Thus,
\begin{equation}
F(t)\leq e^{\int_0^tsq(s)ds}\int_0^te^{-\int_0^\tau s q(s)ds}\tau q(\tau)d\tau=e^{\int_0^tsq(s)ds}-1.
\end{equation}
for $t\geq 0$. Then,
\begin{equation}
u'(t)=1+\int_0^tu''(s)ds=1+\int_0^tq(s)u(s)ds=1+F(t)\leq I(q).
\end{equation}
This completes the proof.
\end{proof}
\begin{lem}\label{lem-Riccati}
Let $q(t)$ be a nonnegative continuous function on $[0,+\infty)$ such that $\int_0^{+\infty}tq(t)dt<+\infty$, and $h$ be the solution of
\begin{equation}
h'+h^2-q(t)=0
\end{equation}
with $\lim_{t\to 0^+}th(t)=1$. Let $v\in C^2((0,+\infty))$ the solution of
\begin{equation}
v''+hv'=0
\end{equation}
with $v'>0$ and $\D\lim_{t\to 0^+}\frac{v(t)}{\log t}=1$. Then, $\D\lim_{t\to+\infty}\frac{v(t)}{\log t}$ exists and
 $$\frac1{I(q)}\leq \lim_{t\to+\infty}\frac{v(t)}{\log t}\leq 1.$$
 Moreover,
\begin{equation}\label{eq-v-t>1}
\log t+v(1)\geq v(t)\geq \frac{1}{I(q)}\log t+v(1),\ \forall t>1.
\end{equation}

\end{lem}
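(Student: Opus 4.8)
With $q\geq 0$ continuous on $[0,\infty)$, $\int_0^\infty tq\,dt<\infty$, $h$ solving $h'+h^2-q=0$ with $th(t)\to 1$, and $v$ solving $v''+hv'=0$ with $v'>0$ and $v(t)/\log t\to 1$ as $t\to 0^+$, one has $\lim_{t\to\infty}v(t)/\log t$ exists and lies in $[1/I(q),1]$, with the pointwise bounds $\log t+v(1)\geq v(t)\geq \tfrac{1}{I(q)}\log t+v(1)$ for $t>1$.

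\begin{proof}[Proof proposal]
The plan is to tie the Riccati quantity $h$ to the linear ODE of Lemma \ref{lem-second-ODE} via the standard substitution, transfer the asymptotics already proved there, and then integrate. First I would observe that if $u\in C^2([0,\infty))$ solves $u''-qu=0$ with $u(0)=0$, $u'(0)=1$, then $u>0$ on $(0,\infty)$ and $\tilde h:=u'/u$ satisfies $\tilde h'+\tilde h^2 = u''/u - (u'/u)^2 + (u'/u)^2 = u''/u = q$, i.e.\ $\tilde h'+\tilde h^2-q=0$; moreover near $0$, $u(t)\sim t$ and $u'(t)\to 1$, so $t\tilde h(t)=tu'(t)/u(t)\to 1$. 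By uniqueness of the solution of the Riccati equation with the prescribed singular initial behaviour (the same uniqueness already invoked implicitly in Theorem \ref{thm-comparison} through comparison of Riccati equations), $h=\tilde h=u'/u$ on $(0,\infty)$.

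Next I would solve for $v$ explicitly. From $v''+hv'=0$ we get $(\log v')' = -h = -u'/u = -(\log u)'$, hence $v'(t) = c/u(t)$ for some constant $c>0$ (using $v'>0$). To pin down $c$: as $t\to 0^+$, $u(t)\sim t$, so $v'(t)\sim c/t$, and since $v(t)/\log t\to 1$ we must have $c=1$. Therefore $v'(t)=1/u(t)$ and, fixing the base point $t=1$,
\begin{equation}
v(t)=v(1)+\int_1^t\frac{ds}{u(s)},\qquad t>0.
\end{equation}
Now I invoke Lemma \ref{lem-second-ODE}: $u'$ is increasing with $u'(0)=1$, so $u(t)\geq t$ for all $t\geq 0$, giving $v'(t)\leq 1/t$ and hence $v(t)\leq v(1)+\log t$ for $t>1$. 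For the lower bound, Lemma \ref{lem-second-ODE}(3) gives $u'(t)\leq I(q)$ for all $t$ (under $\int_0^\infty tq\,dt<\infty$), and together with $u(0)=0$ this yields $u(t)\leq I(q)\,t$. Hence $v'(t)\geq 1/(I(q)t)$ and $v(t)\geq v(1)+\tfrac{1}{I(q)}\log t$ for $t>1$, which is precisely \eqref{eq-v-t>1}.

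Finally, for the limit: dividing \eqref{eq-v-t>1} by $\log t$ and letting $t\to\infty$ already sandwiches $\liminf$ and $\limsup$ of $v(t)/\log t$ between $1/I(q)$ and $1$. To upgrade this to an actual limit, note $v(t)/\log t = \big(v(1)+\int_1^t u(s)^{-1}ds\big)/\log t$; since $u$ is positive and continuous and $t/u(t)=1/u'(\xi)$-type estimates show $t/u(t)$ has a limit — indeed $u'(t)$ increases to a finite limit $u'(\infty)=:A\in[1,I(q)]$, and by L'Hôpital $u(t)/t\to A$, so $t/u(t)\to 1/A$ — L'Hôpital applied to $\int_1^t u(s)^{-1}ds$ over $\log t$ gives $\lim_{t\to\infty} v(t)/\log t = \lim_{t\to\infty} \frac{u(t)^{-1}}{t^{-1}} = \lim_{t\to\infty}\frac{t}{u(t)} = \frac{1}{A} \in \big[\tfrac{1}{I(q)},1\big]$. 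This proves existence of the limit and its location in the asserted interval.

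The only genuinely delicate point is the identification $h=u'/u$, which rests on uniqueness for the Riccati equation among solutions with $th(t)\to 1$ at the origin; this is the standard fact underlying the Riccati comparison used in the proof of Theorem \ref{thm-comparison}, so it may simply be cited. Everything else is a direct integration together with the bounds $t\leq u(t)\leq I(q)t$ from Lemma \ref{lem-second-ODE}, and an application of L'Hôpital's rule for the limit.
\end{proof}
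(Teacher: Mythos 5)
Your proof is correct and follows essentially the same route as the paper's: identify $h=u'/u$ with $u$ from Lemma \ref{lem-second-ODE}, integrate $(uv')'=0$ to get $v'=1/u$, and then use $t\leq u(t)\leq I(q)t$ together with L'H\^opital's rule for the limit. The only cosmetic difference is that you make explicit the uniqueness issue for the singular Riccati solution, which the paper glosses over with ``it is not hard to check.''
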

\begin{proof}
Let $u$ be the function in Lemma \ref{lem-second-ODE}. It is not hard to check that
$$h=\frac{u'}{u}.$$
Substituting this into the equation of $v$, we  have
\begin{equation}
(uv')'=0.
\end{equation}
Thus $v'=\frac{c}{u}$ for some positive constant $c$. By L'Hospital's rule,
$$1=\lim_{t\to 0^+}\frac{v(t)}{\log t}=\lim_{t\to 0^+}tv'(t)=\lim_{t\to 0^+}\frac{ct}{u(t)}=c.$$
By L'Hospital's rule again,
\begin{equation}
\lim_{t\to \infty}\frac{v(t)}{\log t}=\lim_{t\to +\infty}{tv'(t)}=\lim_{t\to +\infty}\frac{t}{u(t)}=\lim_{t\to +\infty}\frac{1}{u'(t)}=\frac1{u'(+\infty)}\geq \frac{1}{I(q)}.
\end{equation}
Moreover, because $u'$ is increasing, $u'(+\infty)\geq u'(0)=1$. Thus
\begin{equation}
\lim_{t\to +\infty}\frac{v(t)}{\log t}=\frac1{u'(+\infty)}\leq 1.
\end{equation}
Finally, for any $t>1$,
\begin{equation}
\begin{split}
\log t=\int_1^t\frac{1}{s}ds\geq v(t)-v(1)=\int_1^t\frac1uds\geq \frac1{I(q)}\int_1^t\frac{1}{s}ds=\frac1{I(q)}\log t.
\end{split}
\end{equation}
This proves \eqref{eq-v-t>1} and completes the proof of the lemma.
\end{proof}
 We are now ready to prove the Liouville-type result in Corollary \ref{cor-Liouville-psh-2} by using Theorem \ref{thm-three-circle-AH}.
\begin{proof}[Proof of Corollary \ref{cor-Liouville-psh-2}] Note that for any $p\in M$, by Theorem \ref{thm-max-principle-AH},
\begin{equation}
M_p(u,r)\leq M_o(u,r+r(o,p)).
\end{equation}
So,
\begin{equation}\label{eq-sub-log-p}
\liminf_{r\to+\infty}\frac{M_p(u,r)}{\log r}\leq \liminf_{r\to+\infty}\frac{M_o(u,r+r(o,p))}{\log r}\leq 0.
\end{equation}
Moreover, by Theorem \ref{thm-three-circle-AH}, $M_p(u,r)$ is convex with respect to $v_p(r)$ where $v_p$ is the function associated to $q_p$. So, for any $0<r_1<r_2<r_3$,
\begin{equation}
M_p(u,r_2)\leq \frac{v_p(r_3)-v_p(r_2)}{v_p(r_3)-v_p(r_1)}M_p(u,r_1)+\frac{v_p(r_2)-v_p(r_1)}{v_p(r_3)-v_p(r_1) }M_p(u,r_3)
\end{equation}
Taking $\liminf_{r_3\to+\infty}$ in the last inequality, by \eqref{eq-sub-log-p} and Lemma \ref{lem-Riccati}, we get
\begin{equation}
M_p(u,r_2)\leq M_p(u,r_1).
\end{equation}
Then, by Theorem  \ref{thm-max-principle-AH},
$$M_p(u,r_1)=M_p(u,r_2)$$
for any $0<r_1<r_2$. Letting $r_1\to 0^+$ in the last equation, we get
\begin{equation}
u(p)=M_p(u,r_2)
\end{equation}
for any $r_2>0$. So $u(p)=\max_M u$. Note that $p$ is arbitrary. Thus $u$ is a constant function.

By Lemma \ref{lem-ddbar-f} and \eqref{eq-L-f-L}, we know that a pluri-subharmonic function $u$ will automatically satisfy $L[u]\geq 0$. So, we have the same Liouville property for pluri-subharmonic functions.
\end{proof}

We next come to prove Theorem \ref{thm-mono-2}.
\begin{proof}[Proof of Theorem \ref{thm-mono-2}](1) It is clear by \eqref{eq-poly} that if $f\in \mathcal P(M)$, then
\begin{equation}
\liminf_{r\to +\infty}\frac{\log M_o(|f|,r)}{\log r}\leq \limsup_{r\to +\infty}\frac{\log M_o(|f|,r)}{\log r}<+\infty.
\end{equation}
Conversely, suppose that $\liminf_{r\to +\infty}\frac{\log M_o(|f|,r)}{\log r}=\lambda$. Then, by Theorem \ref{thm-three-circle-AH}, $\log M_o(|f|,r)-\frac{\lambda}{\alpha} v(r)$ is a convex function of $v(r)$, where $\alpha=\D\lim_{r\to+\infty}\frac{v(r)}{\log r}$. Then,
\begin{equation}\label{eq-convex-2}
\begin{split}
&\log M_o(|f|,r_2)-\frac{\lambda}{\alpha} v(r_2)\\
\leq& \frac{v(r_2)-v(r_1)}{v(r_3)-v(r_1)}\left(\log M_o(|f|,r_3)-\frac{\lambda}{\alpha} v(r_3)\right)+\frac{v(r_3)-v(r_2)}{v(r_3)-v(r_1)}\left(\log M_o(|f|,r_1)-\frac{\lambda}{\alpha} v(r_1)\right)
\end{split}
\end{equation}
for any $0<r_1<r_2<r_3$. Taking $\liminf_{r_3\to+\infty}$ in the last inequality, we know that $\log M_o(|f|,r)-\frac{\lambda}{\alpha}v(r)$ is decreasing. So,
\begin{equation}
\log M_o(|f|,r)\leq \log M_o(|f|,1)-\frac{\lambda}{\alpha}v(1)+\frac{\lambda}{\alpha}v(r)
\end{equation}
for any $r>1$, and hence
\begin{equation}
\limsup_{r\to+\infty}\frac{\log M_o(|f|,r)}{\log r}\leq \lambda.
\end{equation}
Thus $f\in \mathcal O_\lambda(M)\subset \mathcal P(M)$.

(2) and (3) is clear from the proof of (1) by noting that $\alpha\geq\frac1{I(q)}$.

(4) By taking $\lambda=\ord_o(f)$ and $\alpha=1$ in \eqref{eq-convex-2}, letting $r_1\to 0^+$, and noting that $$\lim_{r\to 0^+}\frac{\log M_o(|f|,r)}{v(r)}=\lim_{r\to 0^+}\frac{\log M_o(|f|,r)}{\log r}=\ord_o(f),$$ one  gets  the conclusion.

(5) is a direct corollary of (3) and (4).
\end{proof}
\begin{rem}
For a holomorphic function $f$ on a domain $\Omega$, by Lemma \ref{lem-ddbar-f}, we know that $\Delta_Df=0$ where $\Delta_D=\tr_g\Hess_D$ is the Laplacian operator with respect to $D$. Then, by the unique continuation theorem of Aronszajn \cite{Ar}, $\ord_p(f)$ is finite for any $p\in\Omega$ when $f\not\equiv 0$.  
\end{rem}

By (4) of Theorem \ref{thm-mono-2}, we can prove Corollary \ref{cor-Liouville-holo-2}, a Liouville theorem of Cheng-type for holomorphic functions.
\begin{proof}[Proof of Corollary \ref{cor-Liouville-holo-2}]
Suppose $f$ is nonconstant. Let $h=f-f(o)$ and $k=\ord_o(h)\geq 1$. By (4) of Theorem \ref{thm-mono-2}, $\frac{M_o(|h|,r)}{e^{kv(r)}}$ is an increasing function. So,
\begin{equation}
\frac{|f(o)|+M_o(|f|,r_2)}{e^{kv(r_2)}}\geq\frac{M_o(|h|,r_2)}{e^{kv(r_2)}}\geq\frac{ M_o(|h|,r_1)}{e^{kv(r_1)}}
\end{equation}
for all $r_2>r_1>0$. Letting $r_2\to+\infty$ in the last inequality, by Lemma \ref{lem-Riccati} and \eqref{eq-sub-linear}, we have $M_o(|h|,r_1)=0$ for any $r_1>0$. Therefore $h\equiv0$ and $f\equiv f(o)$ is a constant. This is a contradiction.
\end{proof}

By Corollary \ref{cor-Liouville-holo-2}, we can prove Corollary \ref{cor-fund-holo-2}, an analogue of fundamental theorem for algebra on almost Hermitian manifolds with slightly negative holomorphic sectional curvature.
\begin{proof}[Proof of Corollary \ref{cor-fund-holo-2}]
 Let $f$ be a nonconstant holomorphic function of polynomial growth on $M$ such that $f\neq 0$ everywhere.  Consider the universal covering $\pi:\C\to \C^*$ with $\pi(z)=e^z$. Because $M$ is simply connected, $f:M\to \C^*$ has a lift $h:M\to \C$ along $\pi$ which is clearly still holomorphic. That is, $f=e^{h}$ for some holomorphic function $h$ on $M$. Let $N>2I(q)\deg f$ be a natural number  and $\zeta=e^\frac{h}{N}$. Then, $\zeta$ is a holomorphic function on $M$ such that $f=\zeta^N$. Thus, $\zeta$ is a holomorphic function on $M$ satisfying \eqref{eq-sub-linear}, because $N>2I(q)\deg f$. By Corollary \ref{cor-Liouville-holo-2}, $\zeta$ is constant and hence $f$ is constant. This is a contradiction.
\end{proof}
At the end of this section, we come to prove Theorem \ref{thm-dim}. Before proving Theorem \ref{thm-dim}, we need the following lemma.
\begin{lem}\label{lem-vanishing-order}
Let $(M^{2n}, J)$ be an almost complex manifold of real dimension $2n$ and $o\in M$. Let $(U,z)$ a local complex coordinate at $o$ such that $\frac{\p}{\p z^i}\big|_o\in T^{1,0}_oM$ for $i=1,2,\cdots,n$, and $f$ be a holomorphic function on $U$ such that
$$\frac{\p^{|\alpha|} f}{\p z^\alpha}(o)=0$$
 for any multi-index $\alpha$ with $|\alpha|\leq m$. Then, $\ord_o(f)\geq m+1$.
\end{lem}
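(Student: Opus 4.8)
The plan is to promote the hypothesis---vanishing at $o$ of the pure holomorphic coordinate derivatives $\p^\alpha f/\p z^\alpha$ of order $\le m$---to the vanishing at $o$ of \emph{every} mixed coordinate derivative $\p^{|\alpha|+|\gamma|}f/\p z^\alpha\p\bar z^\gamma$ of order $\le m$. Once that is known, the degree-$m$ Taylor polynomial of $f$ at $o$ vanishes, so $f(x)=O(r(o,x)^{m+1})$ near $o$, which is exactly $\ord_o(f)\ge m+1$.

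First I would record the local form of the almost complex structure. Since $\frac{\p}{\p z^i}\big|_o\in T^{1,0}_oM$ for all $i$, the subspace $T^{1,0}_oM$ is complementary to $\mathrm{span}\{\frac{\p}{\p\bar z^j}\big|_o\}$, and by continuity this persists on a possibly smaller neighborhood of $o$; hence there are smooth functions $a_i^j$ on $U$, with $a_i^j(o)=0$, such that $T^{1,0}M$ is spanned by $v_i=\frac{\p}{\p z^i}+\sum_j a_i^j\frac{\p}{\p\bar z^j}$, $i=1,\dots,n$. Conjugating, $\overline{v_i}=\frac{\p}{\p\bar z^i}+\sum_j\overline{a_i^j}\frac{\p}{\p z^j}$ form a frame of $T^{0,1}M$, so $\dbar f=0$ is equivalent to the system
\begin{equation}
\frac{\p f}{\p\bar z^i}=-\sum_{j=1}^n\overline{a_i^j}\,\frac{\p f}{\p z^j},\qquad i=1,\dots,n,
\end{equation}
on $U$.

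Next I would prove, by induction on $s\ge0$, that $\p^{|\alpha|+|\gamma|}f/\p z^\alpha\p\bar z^\gamma(o)=0$ for all multi-indices $\alpha,\gamma$ with $|\gamma|\le s$ and $|\alpha|+|\gamma|\le m$. The case $s=0$ is the hypothesis. For the inductive step, write $\mathbf 1_k$ for the multi-index with a $1$ in position $k$ and $0$ elsewhere; fix $\alpha,\gamma$ with $|\gamma|=s\ge1$ and $|\alpha|+|\gamma|\le m$, pick $i$ with $\gamma_i\ge1$, and set $\gamma=\gamma'+\mathbf 1_i$. Applying $\p^{|\alpha|+|\gamma'|}/\p z^\alpha\p\bar z^{\gamma'}$ to the $i$-th equation of the system, the left side equals $\p^{|\alpha|+|\gamma|}f/\p z^\alpha\p\bar z^\gamma$, while expanding the right side by the Leibniz rule produces a finite sum whose typical summand is a coordinate derivative of some $\overline{a_i^j}$ times a factor $\p^{|\beta|+1+|\mu|}f/\p z^{\beta+\mathbf 1_j}\p\bar z^\mu$ with $\beta\le\alpha$ and $\mu\le\gamma'$. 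This last factor has $\bar z$-order $|\mu|\le|\gamma'|=s-1<s$ and total order $|\beta|+1+|\mu|\le|\alpha|+1+|\gamma'|=|\alpha|+|\gamma|\le m$, hence vanishes at $o$ by the inductive hypothesis. Therefore every summand, and so also $\p^{|\alpha|+|\gamma|}f/\p z^\alpha\p\bar z^\gamma$, vanishes at $o$, completing the induction.

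Taking $s=m$ shows that all partial derivatives of $f$ of order $\le m$ vanish at $o$, so the degree-$m$ jet of $f$ at $o$ is zero and $\ord_o(f)\ge m+1$. I do not expect a genuine obstacle here: the only truly almost-complex feature is that $\dbar f=0$ is the twisted system above rather than $\p f/\p\bar z^j=0$, and the remaining work is the order bookkeeping in the Leibniz expansion, arranged precisely so that each $f$-factor appearing in the step has strictly smaller $\bar z$-order while its total order stays $\le m$.
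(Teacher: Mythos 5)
Your proposal is correct and follows essentially the same route as the paper: both derive from $\dbar f=0$ a first-order system expressing the $\bar z$-derivatives of $f$ as a smooth matrix times the $z$-derivatives (the paper computes $(df)^{0,1}$ via $\frac12(df+\sqrt{-1}\,Jdf)$ in the coordinate components of $J$, you use an adapted frame of $T^{0,1}M$, but the resulting system is the same), and both then run the identical induction on the $\bar z$-order via the Leibniz rule to kill all mixed derivatives of total order at most $m$ at $o$. No gaps.
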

\begin{proof}
Suppose that
$$J\frac{\p}{\p z^i}=J_i^j\frac{\p}{\p z^j}+J_i^{\bar j}\frac{\p}{\p z^\bj}$$
and
$$J\frac{\p}{\p z^{\bi}}=J_{\bi}^j\frac{\p}{\p z^j}+J_{\bi}^{\bj}\frac{\p}{\p z^{\bj}}.$$
It is clear that $\ol{J_i^j}=J_\bi^\bj$ and $\ol{J_i^\bj}=J_\bi^j$ and moreover $J_i^j(o)=\ii\delta_i^j$ and $J_i^\bj(o)=0$ since $\frac{\p}{\p z^i}\big|_o$ is a $(1,0)$-vector. Note that
\begin{equation}
\begin{split}
\dbar f=&(df)^{(0,1)}\\
=&\frac{1}{2}(df+\ii Jdf)\\
=&\frac12\left(\frac{\p f}{\p z^i}dz^i+\frac{\p f}{\p z^\bi}dz^\bi+\ii\left(\frac{\p f}{\p z^j}Jdz^j+\frac{\p f}{\p z^\bj}Jdz^\bj\right)\right)\\
=&\frac12\left(\left(\frac{\p f}{\p z^i}+\ii\frac{\p f}{\p z^j}J_i^j+\ii\frac{\p f}{\p z^\bj}J_i^\bj\right)dz^i+\left(\frac{\p f}{\p z^\bi}+\ii\frac{\p f}{\p z^j}J_\bi^j+\ii\frac{\p f}{\p z^\bj}J_\bi^\bj\right)dz^\bi\right).
\end{split}
\end{equation}
So,
\begin{equation}
\frac{\p f}{\p z^\bi}+\ii\frac{\p f}{\p z^j}J_\bi^j+\ii\frac{\p f}{\p z^\bj}J_\bi^\bj=0.
\end{equation}
This implies that, in a neighborhood of $o$,
\begin{equation}\label{eq-relation-d-dbar}
\frac{\p f}{\p \bar z}=A(z)\frac{\p f}{\p z}
\end{equation}
where $\frac{\p f}{\p \bar z}=(\frac{\p f}{\p z^{\bar 1}},\cdots, \frac{\p f}{\p z^{\bar n}})^T$ and $\frac{\p f}{\p z}=(\frac{\p f}{\p z^{ 1}},\cdots, \frac{\p f}{\p z^{ n}})^T.$ Here $A$ is the product of the inverse matrix of $(\delta_i^j+\ii J_\bi^\bj)_{i,j=1,2,\cdots,n}$ and $-(\ii J_\bi^j)_{i,j=1,2,\cdots,n}$.

We next come to prove that
\begin{equation}\label{eq-claim}
\frac{\p^{|\alpha|+|\beta|}f}{\p z^\alpha\p\bar z^\beta}(o)=0
\end{equation}
for any multi-index $\alpha$ and $\beta$ with $|\alpha|+|\beta|\leq m$ by induction on $|\beta|$. The claim is clearly true when $|\beta|=0$ by assumption. Suppose the claim is true for $|\beta|\leq k<m$. When $|\beta|=k+1$, let $\gamma$ be a multi-index with $|\gamma|=k$ such that $\beta_i=\gamma_i+1$ for some fixed $i$. Then, by \eqref{eq-relation-d-dbar}, \begin{equation}
\frac{\p f}{\p z^\bi}=\sum_{j=1}^nA_{ij}(z)\frac{\p f}{\p z^j}.
\end{equation}
So,
\begin{equation}
\frac{\p^{|\alpha|+|\beta|}f}{\p z^\alpha\p \bar z^\beta}(o)=\sum_{j=1}^n\frac{\p^{|\alpha|+|\gamma|}}{\p z^\alpha\p \bar z^\gamma}\left(A_{ij}(z)\frac{\p f}{\p z^j}\right)(o)=0
\end{equation}
by the induction hypothesis.

By \eqref{eq-claim}, we know that the partial derivatives of $f$ vanishes up to order $m$. Thus $\ord_o(f)\geq m+1$.
\end{proof}

We are now ready to prove Theorem \ref{thm-dim}.
\begin{proof}[Proof of Theorem \ref{thm-dim}]
Let $(z^1,z^2,\cdots,z^n)$ be a local complex coordinate at $o$ such that $z^i(o)=0$ and $\frac{\p}{\p z^i}\big|_o$ is a $(1,0)$-vector for $i=1,2,\cdots,n$.
Let $$\Phi:\mathcal O_d(M)\to \mathcal O_{[I(q)d]}(\C^n)$$ be such that $$\Phi(f)=\sum_{|\alpha|\leq [I(q)d]}\frac{1}{\alpha!}\frac{\p^{|\alpha|} f}{\p z^\alpha}(o)z^\alpha.$$
If $\Phi$ is not injective, then there is a nonzero $f\in \mathcal O_d(M)$ such that $\Phi(f)=0$. By Lemma \ref{lem-vanishing-order},
 $$\ord_o(f)\geq[I(q)d]+1>I(q)d\geq I(q)\deg f.$$
 This contradicts the vanishing order estimate in (5) of Theorem \ref{thm-mono-2}. So, we have proved the dimension estimate \eqref{eq-dim-g} and its special case \eqref{eq-dim-sharp}.

When the holomorphic sectional curvature of $M$ is nonnegative and the equality of \eqref{eq-dim-sharp} holds for some positive integer $d$, the map $\Phi:\mathcal O_d(M)\to \mathcal O_d(\C^n)$ defined above is an isomorphism. So, there are $f_1,f_2,\cdots,f_n\in\mathcal O_d(M)$ such that $\frac{\p f_j}{\p z^i}(o)=\delta_{ij}$. Then $f_1,f_2,\cdots,f_n$ form a local holomorphic coordinate at $o$. Because the point $o$ can be arbitrarily chosen, the complex structure $J$ is integrable. Thus $\tau_{ij}^\bk=0$ for any $i,j,k=1,2,\cdots,n$.

For each $\xi\in T^{1,0}_oM$ with $\|\xi\|=1$, let $(z^1,z^2,\cdots,z^n)$ be a holomorphic coordinate at $o$ with $z(o)=0$, $\frac{\p}{\p z^1}\big|_o=\xi$ and $g_{i\bar j}(o)=\delta_{ij}$. By that $\Phi$ is an isomorphism, there is a holomorphic function $f\in \mathcal O_d(M)$ such that
\begin{equation}\label{eq-expansion-f}
f(z)=(z^1)^d+O(r(z)^{d+1})
\end{equation}
Because $\ord_o(f)=d\geq \deg f$, by Theorem \ref{thm-mono-2}, $\ord_o(f)=\deg f=d$ and
\begin{equation}
\log M_o(|f|,r)=d\log r+c
\end{equation}
for some constant $c$. For each $\e>0$, let $z_\e\in S_o(\e)$ attain the maximal modulus of $f$ on $S_o(\e)$. Let
$$F(x)=\log|f|(x)-d\log r(x)-c$$
where $r(x)=r_o(x)$. Then, $F\leq 0$ and $F(z_\e)=0$. Noting that $f(z_\e)\neq 0$ and $L[\log|f|](z_\e)=0$ since $f$ is holomorphic. We have
\begin{equation}
\nabla\log|f|(z_\e)=d\nabla\log r(z_\e)
\end{equation}
and
\begin{equation}
L[\log r](z_\e)\geq 0
\end{equation}
because $z_\e$ is a maximum point for $F$. Thus, by Theorem  \ref{thm-comparison},
\begin{equation}
L[\log r](z_\e)=0
\end{equation}
and hence by \eqref{eq-f} in the proof of Theorem  \ref{thm-comparison} and \eqref{eq-diff-hess},
\begin{equation}\label{eq-tau}
\frac{1}{\sqrt 2}\tau_{\bi\bar1}^{\bar 1}(z_\e)=r_{1\bi}(z_\e)=\Hess(r)(z_\e)(e_1^\e,\ol{e_i^\e})+\frac{1}{2\sqrt 2}\tau_{\bar i\bar 1}^{\bar 1}(z_\e)
\end{equation}
for $i=2,\cdots,n$. Here, at $z_\e$, the unitary frame $e_1^\e,e_2^\e,\cdots,e_n^\e$ is chosen to be such that
$$e_1^\e=\mathbf{U}(\nabla r)(z_\e)=\mathbf{U}(\nabla|f|^2)(z_\e).$$
Note that $\mathbf{U}(\nabla|f|^2)(z_\e)$ sub-converges to $\lambda\xi$ with $|\lambda|=1$ by \eqref{eq-expansion-f}. Then, by letting $\e\to0^+$ in \eqref{eq-tau}, since
\begin{equation}
\Hess(r)(z_\e)(e_1^\e,\ol{e_i^\e})\to 0
\end{equation}
as $\e\to 0^+$ for $i=2,3,\cdots,n$ (See \cite[Lemma 2.2]{TY}), we know that
\begin{equation}
\tau_{i1}^1(o)=0
\end{equation}
for $i=2,3,\cdots,n$, where the unitary frame at $o$ is a sub-convergent limit of $(e_1^\e,e_2^\e,\cdots,e_n^\e)$ as $\e\to 0^+$. This implies that
\begin{equation}
\vv<\tau(\eta,\xi),\bar\xi>=0
\end{equation}
for any $\eta\in T_{o}^{1,0}M$ with $\eta\perp\xi$. Because $\xi$ is arbitrary chosen and $\tau$ is skew symmetric, we know that
\begin{equation}
\vv<\tau(\eta,\xi),\bar \xi>=0
\end{equation}
for any $\eta,\xi\in T_{o}^{1,0}M$. Then
\begin{equation}
\vv<\tau(\eta,\xi+\zeta), \ol{\xi+\zeta}>=0
\end{equation}
and
\begin{equation}
\vv<\tau(\eta,\xi+\ii\zeta), \ol{\xi+\ii\zeta}>=0
\end{equation}
for any $\eta,\xi,\zeta\in T_o^{1,0}M$ which implies that
$$\vv<\tau(\eta,\xi),\bar\zeta>=0$$
for any $\eta,\xi,\zeta\in T_o^{1,0}M$. Thus $\tau_{ij}^k(o)=0$ for any $i,j,k=1,2,\cdots,n$. Because $o$ is arbitrary chosen, we know that $\tau_{ij}^k=0$ all over $M$. Hence $\tau$ vanishes and $(M,J,g)$ is K\"ahler. Finally, by the rigidity part of \cite[Theorem 4]{Liu}, we know that $(M,J,g)$ must be biholomorphically isometric to $\C^n$.
\end{proof}

\section{Converse of Three Circle Theorem on Hermitian manifolds}
In this section, we will discuss the converse of the three circle theorem on Hermitian manifolds and prove Theorem \ref{thm-con-1-holo}.

For completeness, we first compute the equation for geodesics in a local holomorphic coordinate. Here, for geodesics we always mean geodesics with respect to the Levi-Civita connection.
\begin{lem}
Let $(M^{2n},J,g)$ be a Hermitian manifold and $o\in M$. Let $\gamma$ a be geodesic starting at $o$ and $z=(z^1,z^2,\cdots,z^n)$ be a local holomorphic coordinate at $o$. Suppose that $z(\gamma(t))=(z^1(t),z^2(t),\cdots,z^n(t))$. Then,
\begin{equation}\label{eq-geodesic}
\frac{d^2z^i}{dt^2}+\Gamma_{jk}^i\frac{dz^j}{dt}\frac{dz^k}{dt}+g^{\bar \lambda i}\tau_{\bar j\bar\lambda}^\bk g_{l\bk}\frac{dz^{\bar j}}{dt}\frac{dz^l}{dt}=0
\end{equation}
where $\Gamma_{ij}^k$ is the Christofel symbol for the Chern connection $D$.
\end{lem}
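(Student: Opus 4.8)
The plan is to compute the geodesic equation directly from the defining condition $\nabla_{\gamma'}\gamma'=0$, where $\nabla$ is the Levi-Civita connection, and then rewrite the Levi-Civita Christoffel symbols in terms of the Chern Christoffel symbols $\Gamma_{jk}^i$ and the torsion $\tau$ using the fundamental identity \eqref{eq-diff-connections}. Write $\gamma'(t)=\frac{dz^i}{dt}\frac{\p}{\p z^i}+\frac{dz^{\bar i}}{dt}\frac{\p}{\p z^{\bar i}}$ in the real coordinate frame associated to the holomorphic coordinate $z$; since $\gamma$ is a real curve, $z^{\bar i}(t)=\overline{z^i(t)}$, so the $(1,0)$-component of the geodesic equation already determines everything. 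First I would expand $\nabla_{\gamma'}\gamma'=0$ and take its $(1,0)$-part, obtaining $\frac{d^2 z^i}{dt^2}+(\text{LC-Christoffel terms})=0$, where the LC-Christoffel terms run over all index types $\vv<\nabla_{\p_a}\p_b,\cdot>$ with $a,b$ possibly barred.

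The key computational step is to substitute \eqref{eq-diff-connections} to express $\vv<\nabla_{\p_a}\p_b, \p_c>$ in terms of $\vv<D_{\p_a}\p_b,\p_c>$ plus torsion correction terms $\frac12[\vv<\tau(\p_b,\p_a),\p_c>+\vv<\tau(\p_a,\p_c),\p_b>-\vv<\tau(\p_c,\p_b),\p_a>]$. Here I would use heavily the structural facts for the Chern connection on a Hermitian manifold: $D$ preserves type, so $D_{\p_j}\p_k=\Gamma_{jk}^i\p_i$ is of type $(1,0)$ and $D_{\p_{\bar j}}\p_k$ is also of type $(1,0)$ but, by the vanishing of the $(1,1)$-part of the torsion \eqref{eq-vanishing-1-1}, equals $D_{\p_k}\p_{\bar j}$; moreover on a Hermitian (integrable) manifold $\tau$ has only a $(2,0)+(0,2)$ part, i.e. $\tau(\p_j,\p_k)=\tau_{jk}^i\p_i$ and its conjugate, with $\tau_{jk}^i=\Gamma_{jk}^i-\Gamma_{kj}^i$. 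Contracting $\gamma'=\gamma'^{(1,0)}+\gamma'^{(0,1)}$ twice, the pure $(1,0)(1,0)$ terms produce $\Gamma_{jk}^i\frac{dz^j}{dt}\frac{dz^k}{dt}$ (the symmetrization kills the torsion correction among holomorphic indices after one checks the three torsion terms cancel appropriately), the mixed $(1,0)(0,1)$ terms vanish because $D$ is Hermitian-compatible and the relevant torsion component is zero, and the pure $(0,1)(0,1)$ terms are the ones that survive as a genuine torsion contribution. Converting that last contribution back to a $\frac{\p}{\p z^i}$-coefficient by raising an index with $g^{\bar\lambda i}$ and lowering with $g_{l\bar k}$ yields exactly the term $g^{\bar\lambda i}\tau_{\bar j\bar\lambda}^{\bar k}g_{l\bar k}\frac{dz^{\bar j}}{dt}\frac{dz^l}{dt}$ in \eqref{eq-geodesic}.

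The main obstacle I anticipate is purely bookkeeping: carefully tracking which of the three torsion terms in \eqref{eq-diff-connections} survive for each combination of barred/unbarred indices, and making sure the symmetrization over $\frac{dz^j}{dt}\frac{dz^k}{dt}$ is applied consistently so that the holomorphic-index torsion contributions genuinely cancel rather than leaving a spurious $\tau_{jk}^i$ term. A useful sanity check is the K\"ahler case: there $\tau\equiv 0$, $D=\nabla$, $\Gamma_{jk}^i$ is the usual holomorphic Christoffel symbol, and \eqref{eq-geodesic} must reduce to the standard complex-coordinate geodesic equation $\frac{d^2z^i}{dt^2}+\Gamma_{jk}^i\frac{dz^j}{dt}\frac{dz^k}{dt}=0$, which it does. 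Another check is index type: every term in \eqref{eq-geodesic} must be a genuine $(1,0)$-coefficient, and one verifies $g^{\bar\lambda i}\tau_{\bar j\bar\lambda}^{\bar k}g_{l\bar k}$ is the correct object obtained from the $(0,2)$-torsion by the metric, consistent with $\vv<\tau(\p_{\bar j},\cdot),\cdot>$ after the contractions above. Once the index gymnastics is organized, the proof is a direct expansion with no analytic input.
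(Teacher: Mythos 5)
Your overall strategy -- expand $\nabla_{\gamma'}\gamma'=0$ in the complex frame and convert Levi--Civita data to Chern data via \eqref{eq-diff-connections}, using integrability to kill the $(0,1)$-part of $\tau$ on $(1,0)$-vectors -- is exactly the route the paper takes, though the paper organizes it more economically: since $\tau(\gamma',\gamma')=0$ and $\tau$ is skew, \eqref{eq-diff-connections} collapses to the single identity $\vv<\nabla_{\gamma'}\gamma'-D_{\gamma'}\gamma',Z>=\vv<\tau(\gamma',Z),\gamma'>$, which with $Z=\p/\p z^{\bar j}$ immediately yields the correction term without any case-by-case bookkeeping.

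There is, however, a concrete error in your anticipated bookkeeping, and it is the one place where the computation actually happens. You assert that the mixed $(1,0)\otimes(0,1)$ contractions vanish and that the pure $(0,1)\otimes(0,1)$ contractions produce the surviving torsion term. This is backwards. The target term $g^{\bar\lambda i}\tau_{\bar j\bar\lambda}^{\bar k}g_{l\bar k}\frac{dz^{\bar j}}{dt}\frac{dz^{l}}{dt}$ visibly contains one barred and one unbarred velocity, so it can only arise from the mixed contraction. Indeed, from \eqref{eq-diff-connections} one finds
\begin{equation*}
\vv<\nabla_{\p_j}\p_{\bar k},\p_{\bar\lambda}>=-\tfrac12\vv<\tau(\p_{\bar\lambda},\p_{\bar k}),\p_j>=\tfrac12\,\tau_{\bar k\bar\lambda}^{\bar m}g_{j\bar m},
\end{equation*}
which is nonzero in general (the Chern term and the other two torsion terms drop out by type and by \eqref{eq-vanishing-1-1}), and summing the two mixed orderings gives exactly the third term of \eqref{eq-geodesic}. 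By contrast $\vv<\nabla_{\p_{\bar j}}\p_{\bar k},\p_{\bar\lambda}>=0$: every term in \eqref{eq-diff-connections} there is a pairing of two $(0,1)$-objects, so the pure antiholomorphic contraction contributes nothing to the $(1,0)$-component. If you ran the computation as described you would lose the torsion term entirely; once this swap is corrected the argument goes through and agrees with the paper.
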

\begin{proof}
Let $X=\nabla_{\gamma'}\gamma'-D_{\gamma'}\gamma'=-D_{\gamma'}\gamma'=X^i\frac{\p}{\p z^i}+X^\bi\frac{\p}{\p z^\bi}$. Then, by \eqref{eq-diff-connections},
\begin{equation}
\vv<X,\frac{\p}{\p z^\bj}>=\vv<\tau\left(\gamma',\frac{\p}{\p z^\bj}\right),\gamma'>.
\end{equation}
Thus, by noting that $\tau_{ij}^\bk=0$ because of the integrability of complex structure, we have
\begin{equation}
X^ig_{i\bj}=\tau_{\bi\bj}^\bk g_{l\bk}\frac{dz^\bi}{dt}\frac{dz^l}{dt}.
\end{equation}
Therefore,
\begin{equation}
X^i=g^{\bar \lambda i}\tau_{\bar j\bar\lambda}^\bk g_{l\bk}\frac{dz^{\bar j}}{dt}\frac{dz^l}{dt}.
\end{equation}
Moreover, note that
\begin{equation}
D_{\gamma'}\gamma'=\left(\frac{d^2z^i}{dt^2}+\Gamma_{jk}^i\frac{dz^j}{dt}\frac{dz^k}{dt}\right)\frac{\p}{\p z^i}+\left(\frac{d^2z^\bi}{dt^2}+\ol{\Gamma_{jk}^i}\frac{dz^\bj}{dt}\frac{dz^\bk}{dt}\right)\frac{\p}{\p z^\bi}.
\end{equation}
So, the equation for geodesics in local holomorphic coordinates is \eqref{eq-geodesic}.
\end{proof}
We next need the following lemma for the existence of good local holomorphic coordinates for Hermitian manifolds.
\begin{lem}
Let $(M^{2n},J,g)$ be a Herimitian manifold. Then, for any $o\in M$ and  $e_1,e_2,\cdots,e_n$ of $\in T^{1,0}_oM$, there is a local holomorphic coordinate $(z^1,z^2,\cdots,z^n)$ at $o$ such that $z(o)=0$, $e_i=\frac{\p}{\p z^i}\big|_o$,
$$\tilde \Gamma_{ij}^k(o)=\Gamma_{ij}^k(o)+\frac{1}{2}\tau_{ij}^k(o)=\frac12\left({\Gamma_{ij}^k+\Gamma_{ji}^k}\right)(o)=0$$
and
$$\p_i\tilde\Gamma_{jk}^l(o)X^iX^jX^k=\p_i\Gamma_{jk}^l(o)X^iX^jX^k=0$$
for any $X\in \C^n$.  We call this coordinate a normal local holomorphic coordinate at $o$ for the basis $(e_1,e_2,\cdots,e_n)$ of $T^{1,0}_oM$ on the Hermitian manifold.
\end{lem}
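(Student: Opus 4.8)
The plan is to adapt the classical construction of geodesic normal coordinates, carried out entirely inside the holomorphic category so that the output is a genuine holomorphic coordinate. The structural fact that makes this possible is that on a \emph{Hermitian} manifold the torsion $\tau$ of the Chern connection is a $(2,0)$-tensor with values in $T^{1,0}M$: its $(1,1)$-part vanishes by the defining property of $D$, and its $(0,2)$-part is a constant multiple of the Nijenhuis tensor, which vanishes since $J$ is integrable. Hence in any local holomorphic coordinate $(w^1,\dots,w^n)$ one has $D_{\p/\p\bar w^i}(\p/\p w^j)=0$, $\tau_{ij}^{\bar k}=0$, $D_{\p/\p w^i}(\p/\p w^j)=\Gamma_{ij}^k\,\p/\p w^k$, and the symmetrized symbols $\tilde\Gamma_{ij}^k:=\tfrac12(\Gamma_{ij}^k+\Gamma_{ji}^k)$ differ from $\Gamma_{ij}^k$ only by a multiple of the tensor $\tau_{ij}^k$. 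Consequently $\tilde\Gamma_{ij}^k$ transforms under a holomorphic change of coordinates $z=z(w)$ by exactly the classical affine transformation law
\begin{equation*}
\bar\Gamma^{c}_{ab}=\frac{\p z^c}{\p w^k}\left(\frac{\p^2 w^k}{\p z^a\p z^b}+\frac{\p w^i}{\p z^a}\frac{\p w^j}{\p z^b}\,\tilde\Gamma^k_{ij}\right),
\end{equation*}
where $\bar\Gamma$ denotes the symmetrized symbols in the $z$-coordinate, because the torsion piece transforms tensorially and the lower indices are already symmetric. So the problem reduces to the classical one: choose the quadratic and cubic Taylor coefficients of a holomorphic coordinate change at $o$ so as to kill $\tilde\Gamma_{ij}^k(o)$ and the totally symmetric part of $\p_l\tilde\Gamma_{ij}^k(o)$.

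Concretely, I would first use the integrability of $J$ (Newlander--Nirenberg) together with a $\C$-linear change to fix a holomorphic coordinate $w$ centered at $o$ with $\p/\p w^i|_o=e_i$, and then normalize in two stages. \emph{Stage one}: put $\hat w^k=w^k+\tfrac12\tilde\Gamma_{ij}^k(o)\,w^iw^j$, a holomorphic change with identity differential at $o$; its Hessian at $o$ is $\tilde\Gamma_{ab}^k(o)$, so the inverse change has Hessian $-\tilde\Gamma_{ab}^k(o)$ at $o$, and the transformation law evaluated at $o$ (all Jacobians being the identity there) gives $\bar\Gamma^c_{ab}(o)=-\tilde\Gamma^c_{ab}(o)+\tilde\Gamma^c_{ab}(o)=0$ for the symmetrized symbols in the $\hat w$-coordinate. \emph{Stage two}: relabel and assume now $\tilde\Gamma_{ij}^k(o)=0$; put $z^k=w^k+\tfrac16 B_{ijl}^k w^iw^jw^l$ with $B_{ijl}^k$ symmetric in its lower indices. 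Differentiating the transformation law once at $o$ and using that both $\tilde\Gamma(o)$ and the Hessian of the change vanish there, one obtains $\p_e\bar\Gamma^c_{ab}(o)=\p_e\tilde\Gamma_{ab}^c(o)-B_{abe}^c$; choosing $B_{abe}^c$ to be the total symmetrization of $\p_e\tilde\Gamma_{ab}^c(o)$ over $\{a,b,e\}$ forces the totally symmetric part of $\p_e\bar\Gamma^c_{ab}(o)$ to vanish, which is exactly $\p_i\tilde\Gamma_{jk}^l(o)X^iX^jX^k=0$ for every $X\in\C^n$. At each stage the change is holomorphic and fixes $o$ with identity linear part, so the final $z$ is a holomorphic coordinate at $o$ with $z(o)=0$ and $\p/\p z^i|_o=e_i$. (The non-symmetric part of $\p\tilde\Gamma$ at $o$ is a genuine curvature obstruction, but it drops out of the contraction against $X\otimes X\otimes X$, so it does no harm.)

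It remains to restate the two conclusions in terms of $\Gamma$ itself. The identity $\Gamma_{ij}^k(o)+\tfrac12\tau_{ij}^k(o)=\tfrac12(\Gamma_{ij}^k+\Gamma_{ji}^k)(o)=\tilde\Gamma_{ij}^k(o)$ is immediate from the definition of the torsion, so Stage one gives $\Gamma_{ij}^k(o)+\tfrac12\tau_{ij}^k(o)=0$. For the cubic form, since $\tau_{jk}^l X^jX^k\equiv 0$ by antisymmetry of $\tau$ in its lower indices, differentiating yields $\p_i\tau_{jk}^l(o)X^iX^jX^k=0$, whence $\p_i\Gamma_{jk}^l(o)X^iX^jX^k=\p_i\tilde\Gamma_{jk}^l(o)X^iX^jX^k\pm\tfrac12\p_i\tau_{jk}^l(o)X^iX^jX^k=0$ as well. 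I do not anticipate a serious obstacle: this is the standard normal-coordinate argument, and the only points that demand care are the verification that the symmetric part of the Chern symbols transforms as an ordinary torsion-free connection---which is exactly where integrability of $J$ enters, and is special to the Hermitian rather than merely almost Hermitian setting---and the combinatorial bookkeeping of the symmetrization in Stage two.
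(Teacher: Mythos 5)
Your proposal is correct and follows essentially the same route as the paper: both reduce to the fact that the symmetrized Chern symbols $\tilde\Gamma_{ij}^k$ define a torsion-free connection compatible with $J$ (so they obey the classical affine transformation law under holomorphic coordinate changes), kill $\tilde\Gamma(o)$ by a quadratic normalization, and then kill the totally symmetric part of $\p\tilde\Gamma(o)$ by prescribing the third-order Taylor coefficients of the coordinate change, exactly as in the paper's choice of $\frac{\p^3 w^l}{\p z^i\p z^j\p z^k}(0)$. The only cosmetic difference is that the paper invokes the first normalization as a "standard argument," whereas you carry it out explicitly.
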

\begin{proof}
Let $\tilde D$ be the connection defined by $\tilde \Gamma_{ij}^k$. Then $\tilde D$ is a torsion free connection compatible with the complex structure. So, by standard argument, we know the existence of local holomorphic coordinate $w$ at $o$ such that $w(o)=0$, $e_i=\frac{\p}{\p w^i}\big|_o$ and $\tilde\Gamma_{w^iw^j}^{w^k}(o)=0$. Let $z$ be another local holomorphic coordinate at $o$ to be determined such that $z(o)=0$, $\frac{\p w^i}{\p z^j}(o)=\delta_j^i$ and $\frac{\p^2 w^k}{\p z^i\p z^j}(o)=0$. Then, it is clear that $e_i=\frac{\p}{\p z^i}\big|_o$ and by the transformation of Christofel symbols
\begin{equation}
\tilde\Gamma_{z^iz^j}^{z^k}=\tilde\Gamma_{w^\lambda w^\mu}^{w^\nu}\frac{\p w^\lambda}{\p z^i}\frac{\p w^\mu}{\p z^j}\frac{\p z^k}{\p w^\nu}+\frac{\p^2w^\nu}{\p z^i\p z^j}\frac{\p z^k}{\p w^\nu}
\end{equation}
we know that $\tilde \Gamma_{z^iz^j}^{z^k}(o)=0$. Moreover, note that
\begin{equation}
\begin{split}
\p_{z^i}\tilde\Gamma_{z^jz^k}^{z^l}(o)=&\p_{z^i}\tilde\Gamma_{w^\lambda w^\mu}^{w^\nu}\frac{\p w^\lambda}{\p z^j}\frac{\p w^\mu}{\p z^k}\frac{\p z^l}{\p w^\nu}+\frac{\p^3w^\nu}{\p z^i\p z^j\p z^k}\frac{\p z^l}{\p w^\nu}\\
=&\p_{w^i}\tilde\Gamma_{w^j w^k}^{w^l}(o)+\frac{\p^3w^l}{\p z^i\p z^j\p z^k}(0)
\end{split}
\end{equation}
So, we require that
\begin{equation}
\frac{\p^3w^l}{\p z^i\p z^j\p z^k}(0)X^iX^jX^k=-\p_{w^i}\tilde\Gamma_{w^j w^k}^{w^l}(o)X^iX^jX^k
\end{equation}
for any $X\in \C^n$. This requirement can  be satisfied by chosen $z$ such that
\begin{equation}
\frac{\p^3w^l}{\p z^i\p z^j\p z^k}(0)=-\frac13\left(\p_{w^i}\tilde\Gamma_{w^j w^k}^{w^l}+\p_{w^k}\tilde\Gamma_{w^i w^j}^{w^l}+\p_{w^j}\tilde\Gamma_{w^k w^i}^{w^l}\right)(o)
\end{equation}
by noting that $\tilde\Gamma_{w^iw^j}^{w_k}=\tilde\Gamma_{w^jw^i}^{w_k}$. This completes the proof of lemma.
\end{proof}
\begin{rem}
The requirement that $\p_i\tilde\Gamma_{jk}^l(o)X^iX^jX^k=0$ for any $X\in \C^n$ is the same as requiring that
\begin{equation}
\p_i\tilde\Gamma_{jk}^l+\p_j\tilde\Gamma_{ki}^l+\p_k\tilde\Gamma_{ij}^l(o)=0.
\end{equation}
It is also equivalent to
\begin{equation}
\p_i\Gamma_{jk}^l+\p_i\Gamma_{kj}^l+\p_j\Gamma_{ki}^l+\p_j\Gamma_{ik}^l+\p_k\Gamma_{ij}^l+\p_k\Gamma_{ji}^l(o)=0.
\end{equation}

\end{rem}
Next, we need the following lemma.
\begin{lem}\label{lem-expansion-z}
Let $(M^{2n},J,g)$ be a Hermitian manifold, $o\in M$, $e_1,e_2,\cdots,e_n$ be a unitary basis of $T_o^{1,0}M$,  and $z=(z^1,z^2,\cdots,z^n)$ be a normal local holomorphic coordinate for $(e_1,e_2,\cdots,e_n)$. Let $\gamma$ be a normal geodesic with $\gamma(0)=o$ and $\gamma'(0)=X=X^ie_i+X^{\bi}\ol{e_i}\in T_oM$. Suppose $z(\gamma(t))=(z^1(t),z^2(t),\cdots, z^n(t))$. Then,
\begin{equation*}
\begin{split}
&z^i(t)\\
=&X^i t+\frac12\tau_{\bar i\bar j}^\bk(o)X^\bj X^kt^2+\frac16\bigg(\left(R_{j\bi k\bl}+\frac12\tau_{\lambda j}^i\tau_{\bar\lambda\bar l}^\bk -\tau_{\bar \lambda \bi}^\bk\tau_{\lambda j}^l\right)(o) X^j X^k X^{\bar l}\\
&-\left(\p_\mu(\tau_{\bar j\bi}^\bk g_{l\bk})(o)X^\mu X^l
+\p_{\bar \mu}(\tau_{\bar j\bi}^\bk g_{l\bk})(o)X^{\bar\mu}X^l+\frac32\tau_{\bar j\bi}^\bk\tau_{\bar k\bar\lambda}^{\bar \mu}(o)X^{\bar\lambda}X^\mu\right) X^{\bar j}\bigg)t^3+O(t^4).
\end{split}
\end{equation*}
\end{lem}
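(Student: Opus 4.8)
The plan is to obtain the Taylor expansion of $z^i(t)$ directly from the geodesic equation \eqref{eq-geodesic}, using the normalizations built into the coordinate system. First I would record the three derivatives at $t=0$ needed to recover the coefficients of $t$, $t^2$, $t^3$. The value $z^i(0)=0$ and $\dot z^i(0)=X^i$ are immediate from $\gamma(0)=o$ and $\gamma'(0)=X$. For the $t^2$-coefficient, I evaluate \eqref{eq-geodesic} at $t=0$: since the coordinate is normal, $\tilde\Gamma^k_{ij}(o)=\frac12(\Gamma^k_{ij}+\Gamma^k_{ji})(o)=0$, so the symmetric part of $\Gamma^i_{jk}(o)$ drops out and only the torsion terms survive. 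Writing $\Gamma^i_{jk}=\tilde\Gamma^i_{jk}-\frac12\tau^i_{jk}$, the term $\Gamma^i_{jk}\dot z^j\dot z^k$ contributes nothing at $o$ (the torsion part is antisymmetric and contracted against a symmetric tensor), leaving $\ddot z^i(0)=-g^{\bar\lambda i}\tau^{\bar k}_{\bar j\bar\lambda}g_{l\bar k}(o)X^{\bar j}X^l = \tau^{\bar k}_{\bar i\bar j}(o)X^{\bar j}X^k$ after using $g_{i\bar j}(o)=\delta_{ij}$ and raising/lowering indices; this gives the $\frac12\tau^{\bar k}_{\bar i\bar j}(o)X^{\bar j}X^k\,t^2$ term.

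The main work is the $t^3$-coefficient, i.e. computing $\dddot z^i(0)$ by differentiating \eqref{eq-geodesic} once in $t$ and evaluating at $0$. This produces several groups of terms: (i) $\partial_\mu\Gamma^i_{jk}(o)\dot z^\mu\dot z^j\dot z^k$ plus $\partial_{\bar\mu}\Gamma^i_{jk}(o)\dot z^{\bar\mu}\dot z^j\dot z^k$ from differentiating the Christoffel term; (ii) $\Gamma^i_{jk}(o)\,(\ddot z^j\dot z^k+\dot z^j\ddot z^k)$ — but $\Gamma^i_{jk}(o)$, though not zero individually, has vanishing symmetric part, so only its antisymmetric (torsion) part $-\frac12\tau^i_{jk}(o)$ survives, and this, contracted with $\ddot z^j$ from step 2, yields the $\tau^i_{\lambda j}\tau^{\bar\lambda}_{\bar l}$ and $-\tau^{\bar\lambda}_{\bar i}\tau^l_{\lambda j}$ type cross terms; (iii) the derivative of the torsion term $g^{\bar\lambda i}\tau^{\bar k}_{\bar j\bar\lambda}g_{l\bar k}\,\dot z^{\bar j}\dot z^l$, which gives $\partial_\mu(\tau^{\bar k}_{\bar j\bar i}g_{l\bar k})X^\mu X^l + \partial_{\bar\mu}(\cdots)X^{\bar\mu}X^l$ contracted against $X^{\bar j}$; and (iv) the torsion term times $(\ddot z^{\bar j}\dot z^l + \dot z^{\bar j}\ddot z^l)$, which by substituting $\ddot z^{\bar j}(0)=\overline{\tau^{\bar\mu}_{\bar k\bar\lambda}}(o)X^\lambda X^{\bar k}=\tau^\mu_{k\lambda}(o)\cdots$ — more precisely $\ddot z^{\bar j}(0)$ is the conjugate expression — produces the $\frac32\tau^{\bar k}_{\bar j\bar i}\tau^{\bar\mu}_{\bar k\bar\lambda}X^{\bar\lambda}X^\mu$ term. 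Finally I would convert $\partial_\mu\Gamma^i_{jk}(o)$ into curvature: using the normal-coordinate relation $\partial_i\Gamma^l_{jk}+\partial_j\Gamma^l_{ki}+\partial_k\Gamma^l_{ij}$ symmetrized vanishes (the remark preceding the lemma), together with the Chern curvature formula $R^l_{\ j i\bar k}=-\partial_{\bar k}\Gamma^l_{ij}$ and the symmetry, the fully symmetrized $XXX$-contraction of $\partial\Gamma$ against holomorphic directions collapses to $R_{j\bar i k\bar l}(o)X^jX^kX^{\bar l}$ up to the torsion-squared corrections already present.

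The hard part will be bookkeeping: keeping the conjugations straight (the geodesic has a genuinely non-holomorphic $\dot z^{\bar j}\dot z^l$ driving term because $\tau$ has a $(0,2)$-component acting on $(1,0)$-indices even for integrable $J$), and correctly combining $\partial_\mu\Gamma$, the normal-coordinate vanishing condition, and the definition of Chern curvature to land exactly on the combination $R_{j\bar i k\bar l}+\frac12\tau^i_{\lambda j}\tau^{\bar k}_{\bar\lambda\bar l}-\tau^{\bar k}_{\bar\lambda\bar i}\tau^l_{\lambda j}$. I expect the symmetrization subtleties — namely that in the coefficient of $t^3$ one must symmetrize over the three factors $\dot z\dot z\dot z$ coming out of $\frac{d^3}{dt^3}$ and that $\partial_\mu\Gamma^i_{jk}(o)$ itself is not symmetric in all lower indices — to be where errors creep in. Once all contributions are assembled and $g_{i\bar j}(o)=\delta_{ij}$ is used to identify raised and lowered indices, the stated formula follows. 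I would also double-check the leading terms against the Kähler case, where $\tau\equiv0$ and the expansion must reduce to the classical $z^i(t)=X^it+\frac16 R_{j\bar ik\bar l}(o)X^jX^kX^{\bar l}t^3+O(t^4)$ in Kähler normal coordinates, as a sanity check on signs and the factor $\tfrac16$.
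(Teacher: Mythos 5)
Your overall strategy is exactly the paper's: Taylor--expand $z^i(t)$ by evaluating the geodesic equation \eqref{eq-geodesic} and its $t$-derivative at $t=0$, using $\tilde\Gamma_{ij}^k(o)=0$, the symmetrized vanishing of $\p\tilde\Gamma$ at $o$, and the identity $R_{j\bi k\bl}(o)=-\p_{\bl}\Gamma^i_{jk}(o)$; the zeroth-, first- and second-order coefficients are handled correctly. There is, however, a concrete error in your step (ii): the contribution $\Gamma^i_{jk}(o)\,(\ddot z^j\dot z^k+\dot z^j\ddot z^k)$ vanishes identically, because at $o$ only the antisymmetric part $-\tfrac12\tau^i_{jk}(o)$ of $\Gamma^i_{jk}$ survives and it is contracted against an expression symmetric in $(j,k)$. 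It therefore cannot produce the cross terms $\tfrac12\tau_{\lambda j}^i\tau_{\bar\lambda\bl}^{\bk}$ and $-\tau_{\bar\lambda\bi}^{\bk}\tau_{\lambda j}^l$ appearing next to $R_{j\bi k\bl}$. In the paper's computation those terms arise from the piece of the product rule that your step (iii) omits: differentiating the full factor $g^{\bar\lambda i}\tau_{\bar j\bar\lambda}^{\bk}g_{l\bk}$ one must also hit $g^{\bar\lambda i}$, and at $o$ one has $\p_\mu g^{\bar\lambda i}=\tfrac12\tau_{\lambda\mu}^i$ and $\p_{\bar\mu}g^{\bar\lambda i}=\tfrac12\tau_{\bi\bar\mu}^{\bar\lambda}$; these contributions, combined with the $\ddot z^{\bar j}\dot z^l$ and $\dot z^{\bar j}\ddot z^l$ contributions of your step (iv), are what assemble into the $\tfrac12\tau\tau$, $-\tau\tau$ and $\tfrac32\tau\tau$ coefficients of the stated formula. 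As written, your bookkeeping drops the $\p g^{\bar\lambda i}$ terms and ``recovers'' them from a term that is actually zero, so the $t^3$-coefficient would come out wrong; once this is repaired the computation coincides with the paper's. Note also that your K\"ahler sanity check cannot detect this error, since every term in question is quadratic in $\tau$.
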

\begin{proof}
Substituting the initial data $z^i(0)=0$ and $\frac{dz^i}{dt}(0)=X^i$ into \eqref{eq-geodesic}, we have
\begin{equation}\label{eq-2nd}
\begin{split}
\frac{d^2z^i}{dt^2}(0)=&-\Gamma_{jk}^i(o)X^jX^k-g^{\bar \lambda i}\tau_{\bar j\bar\lambda}^\bk g_{l\bk}(o)X^{\bar j}X^l\\
=&\frac12\tau_{jk}^i(o)X^jX^k-\tau_{\bar j\bar i}^\bk(o)X^{\bar j}X^k\\
=&\tau_{\bar i\bar j}^\bk(o)X^\bj X^k\\
\end{split}
\end{equation}
by that $\tilde \Gamma_{ij}^k(o)=0$ and $g_{i\bj}(o)=\delta_{ij}$. Moreover,
\begin{equation}\label{eq-3nd-1}
\begin{split}
&\frac{d^3z^i}{dt^3}(0)\\
=&-\p_l\Gamma_{jk}^i(o)\frac{dz^l}{dt}\frac{dz^j}{dt}\frac{dz^k}{dt}(0)-\p_{\bar l}\Gamma_{jk}^i(o)\frac{dz^{\bar l}}{dt}\frac{dz^j}{dt}\frac{dz^k}{dt}(0)+\frac12\tau_{jk}^i(o)\frac{d}{dt}\left(\frac{dz^j}{dt}\frac{dz^k}{dt}\right)(0)\\
&-\p_\mu(g^{\bar \lambda i}\tau_{\bar j\bar\lambda}^\bk g_{l\bk})(o)\frac{dz^\mu}{dt}\frac{dz^{\bar j}}{dt}\frac{dz^l}{dt}(0)-\p_{\bar \mu}(g^{\bar \lambda i}\tau_{\bar j\bar\lambda}^\bk g_{l\bk})(o)\frac{dz^{\bar \mu}}{dt}\frac{dz^{\bar j}}{dt}\frac{dz^l}{dt}(0)\\
&-g^{\bar \lambda i}\tau_{\bar j\bar\lambda}^\bk g_{l\bk}(o)\frac{d^2z^{\bar j}}{dt^2}\frac{dz^l}{dt}(0)-g^{\bar \lambda i}\tau_{\bar j\bar\lambda}^\bk g_{l\bk}(o)\frac{dz^{\bar j}}{dt}\frac{d^2z^l}{dt^2}(0)\\
=&R_{j\bi k\bl}(o)X^jX^kX^{\bar l}-\p_\mu(g^{\bar \lambda i})\tau_{\bar j\bar\lambda}^\bk (o)X^\mu X^{\bar j}X^k-\p_\mu(\tau_{\bar j\bi}^\bk g_{l\bk})(o)X^\mu X^{\bar j}X^l\\
&-\p_{\bar \mu}(g^{\bar \lambda i})\tau_{\bar j\bar\lambda}^\bk(o)X^{\bar\mu}X^{\bar j}X^k-\p_{\bar \mu}(\tau_{\bar j\bi}^\bk g_{l\bk})(o)X^{\bar\mu}X^{\bar j}X^l-\tau_{\bar \lambda \bi}^\bk\tau_{\lambda j}^l(o) X^j X^k X^{\bar l}\\
&-\tau_{\bar j\bi}^\bk\tau_{\bar k\bar\lambda}^{\bar \mu}(o)X^{\bar\lambda}X^\mu X^{\bar j}
\end{split}
\end{equation}
by noting that $\p_k\Gamma_{ij}^l(o)X^iX^jX^k=0$, $R_{j\bi k\bl}(o)=-\p_{\bar l}\Gamma_{jk}^i(o)$ and using \eqref{eq-2nd}. Furthermore, note that
\begin{equation}
\p_\mu g^{\bar \lambda i}(o)=-\p_\mu g_{\lambda\bi}(o)=-\Gamma_{\lambda\mu}^i(o)=\frac12\tau_{\lambda\mu}^i(o)
\end{equation}
and
\begin{equation}
\p_{\bar \mu} g^{\bar\lambda i}(o)=-\p_{\bar \mu} g_{\lambda \bi}(o)=-\Gamma_{\bi\bar\mu}^{\bar\lambda}(o)=\frac12\tau_{\bi\bar\mu}^{\bar\lambda}(o).
\end{equation}
Substituting these into \eqref{eq-3nd-1}, we get
\begin{equation}\label{eq-3nd-2}
\begin{split}
&\frac{d^3z^i}{dt^3}(0)\\
=&\left(R_{j\bi k\bl}+\frac12\tau_{\lambda j}^i\tau_{\bar\lambda\bar l}^\bk -\tau_{\bar \lambda \bi}^\bk\tau_{\lambda j}^l\right)(o) X^j X^k X^{\bar l}\\
&-\left(\p_\mu(\tau_{\bar j\bi}^\bk g_{l\bk})(o)X^\mu X^l
+\p_{\bar \mu}(\tau_{\bar j\bi}^\bk g_{l\bk})(o)X^{\bar\mu}X^l+\frac32\tau_{\bar j\bi}^\bk\tau_{\bar k\bar\lambda}^{\bar \mu}(o)X^{\bar\lambda}X^\mu\right) X^{\bar j}.
\end{split}
\end{equation}
Then, by Taylor expansion, we get the conclusion.
\end{proof}
We are now ready to prove Theorem \ref{thm-con-1-holo}.
\begin{proof}[Proof of Theorem \ref{thm-con-1-holo}]
Suppose the conclusion is not true. Then, there is a point $o\in M$ and a unitary basis $e_1,e_2,\cdots,e_n$ at $o$ such that
\begin{equation}
R_{1\bar11\bar1}+\sum_{i=2}^n|\tau_{i1}^1|(o)<0.
\end{equation}
Let $(z^1,\cdots,z^n)$ be a normal local holomorphic coordinate for $(e_1,e_2,\cdots,e_n)$ at $o$.

In the following, let the notations be the same as in Lemma \ref{lem-expansion-z}. Let $$f(z)=z^1\left(1-\frac12\tau_{1i}^{1}(o)z^i\right)$$ and $$p(z)=z^1\left(1-\frac12\tau_{\bar 1\bi}^{\bar 1}(o)z^\bi\right).$$ Then $f$ is a holomorphic function near $o$ and $|f|=|p|$. Because $\log M(|f|,r)$ is a convex function of $\log r$, by (4) of Theorem \ref{thm-mono-2}, we know that $\frac{M_o(|f|,r)}{r} $ is increasing. On the other hand, by Lemma \ref{lem-expansion-z},
\begin{equation*}
\begin{split}
&p(z(t))\\
=&X^1t+\frac12\left(\sum_{j,k=2}^n\tau_{\bar 1\bj}^\bk(o) X^\bj X^k\right)t^2+\\
&\frac16\bigg(\left(R_{j\bar 1 k\bl}+\frac12\tau_{\lambda j}^1\tau_{\bar\lambda\bar l}^\bk -\tau_{\bar \lambda \bar 1}^\bk\tau_{\lambda j}^l\right)(o) X^j X^k X^{\bar l}+\frac32\tau_{\bi 1}^{\bar 1}\tau_{ij}^k(o)X^jX^\bk X^1 \\
&-\left(\p_\mu(\tau_{\bar j\bar 1}^\bk g_{l\bk})(o)X^\mu X^l
+\p_{\bar \mu}(\tau_{\bar j\bar 1}^\bk g_{l\bk})(o)X^{\bar\mu}X^l+\frac32\tau_{\bar j\bar 1}^\bk\tau_{\bar k\bar\lambda}^{\bar \mu}(o)X^{\bar\lambda}X^\mu+\frac32\tau_{\bar 1\bar i}^{\bar 1}\tau_{\bar 1\bj}^\bk X^k X^\bi\right) X^{\bar j}\bigg)t^3\\
&+O(t^4).
\end{split}
\end{equation*}
Thus,
\begin{equation*}
\begin{split}
&|f(z(t))|^2\\
=&|p(z(t))|^2\\
=&|X^1|^2t^2+\Re\left\{\left(\sum_{j,k=2}^n\tau_{\bar 1\bj}^\bk(o) X^\bj X^k\right)X^{\bar 1}\right\}t^3\\
&+\frac13\Re\bigg\{\left(R_{j\bar 1 k\bl}+\frac12\tau_{\lambda j}^1\tau_{\bar\lambda\bar l}^\bk -\tau_{\bar \lambda \bar 1}^\bk\tau_{\lambda j}^l\right)(o) X^j X^k X^{\bar l}X^{\bar 1}+\frac32\tau_{\bi 1}^{\bar 1}\tau_{ij}^k(o)X^jX^\bk |X^1|^2 \\
&-\left(\p_\mu(\tau_{\bar j\bar 1}^\bk g_{l\bk})(o)X^\mu X^l
+\p_{\bar \mu}(\tau_{\bar j\bar 1}^\bk g_{l\bk})(o)X^{\bar\mu}X^l+\frac32\tau_{\bar j\bar 1}^\bk\tau_{\bar k\bar\lambda}^{\bar \mu}(o)X^{\bar\lambda}X^\mu+\frac32\tau_{\bar 1\bar i}^{\bar 1}\tau_{\bar 1\bj}^\bk X^k X^\bi\right) X^{\bar j}X^{\bar 1}\bigg\}t^4\\
&+\frac14\left|\sum_{j,k=2}^n\tau_{\bar 1\bj}^\bk(o) X^\bj X^k\right|^2t^4+O(t^5)\\
\leq&|X^1|^2t^2+\frac13(R_{1\bar 11\bar 1}+\sum_{i=2}^n|\tau_{i 1}^{ 1}|^2)(o)|X^1|^4t^4+C\left(\left(\frac12-|X^1|^2\right)t^3+t^4\sqrt{\frac12-|X^1|^2}+t^5\right)
\end{split}
\end{equation*}
for some positive constant $C$ and when $t$ is small enough, by noting  that $$\sum_{i=1}^n|X^i|^2=\frac12$$
since $\|X\|=1$ and $g_{i\bar j}(o)=\delta_{ij}$. Note that
\begin{equation}
\sqrt{\frac12-|X^1|^2}\leq \frac{\e}{4}+\frac1\e\left(\frac12-|X^1|^2\right)
\end{equation}
where $\e>0$ is chosen to satisfy
\begin{equation}
\frac{C\e}{4}=-\frac{1}{24}(R_{1\bar 11\bar 1}+\sum_{i=2}^n|\tau_{i1}^{1}|^2)(o).
\end{equation}
So, when $t$ is small enough,
\begin{equation}
\begin{split}
|f(z(t))|^2\leq& |X^1|^2t^2+\frac13(R_{1\bar 11\bar 1}+\sum_{i=2}^n|\tau_{i1}^{1}|^2)(o)|X^1|^4t^4\\
&+C\left(\left(\frac12-|X^1|^2\right)t^3+\left(\frac\e4+\frac1\e\left(\frac12-|X^1|^2\right)\right)t^4+t^5\right)
\end{split}
\end{equation}
Note that, when $t$ is small enough, the function
\begin{equation}
F_t(x)=x+\frac13(R_{1\bar 11\bar 1}+\sum_{i=2}^n|\tau_{i1}^{1}|^2)(o)x^2t^2+C\left(\left(\frac12-x\right)t+\left(\frac{\e}{4}+\frac1\e\left(\frac12-x\right)\right)t^2+t^3\right)
\end{equation}
is increasing for $x\in [0,\frac12]$. Thus $F_t(x)\leq F_t(\frac12)$. So,
\begin{equation}
|f(z(t))|^2\leq \frac{t^2}{2}+\frac1{24}(R_{1\bar 11\bar 1}+\sum_{i=2}^n|\tau_{i 1}^{1}|^2)(o)t^4+Ct^5< \frac{t^2}{2}
\end{equation}
when $t>0$ is small enough. This implies that
\begin{equation}
\frac{M_o(|f|,r)}{r}<\frac1{\sqrt 2}
\end{equation}
when $r>0$ is small enough. On the other hand, it is clear that
\begin{equation}
\lim_{r\to 0^+}\frac{M_o(|f|,r)}{r}=\frac1{\sqrt 2}.
\end{equation}
This contradicts the fact that $\frac{M_o(|f|,r)}{r}$ is increasing.

The equivalence of the two curvature conditions can be seen in the proof of Lemma \ref{lem-curv-holo-sec}.
\end{proof}

\end{document}